\pgfplotsset{every axis/.append style={
tick label style={font=\scriptsize}  
}}
\newtheorem{example}{Example}[section]
\newtheorem{definition}{Definition}[section]
\newtheorem{theorem}{Theorem}
\newtheorem{proposition}{Proposition}[section]
\newtheorem{corollary}[proposition]{Corollary}
\newtheorem{lemma}[proposition]{Lemma}
\newtheorem{remark}{Remark}[section]
\numberwithin{figure}{section}
\DeclareMathOperator*{\argmin}{arg\, min}
\DeclareMathOperator{\Lip}{\Lip}
\newcommand{\cB}{{\cal B}}
\newcommand{\cE}{{\cal E}}
\newcommand{\cL}{\mathscr{L}}
\newcommand{\cM}{{\cal M}}
\newcommand{\cO}{{\cal O}}
\newcommand{\cS}{{\cal S}}
\newcommand{\cT}{{\cal T}}
\newcommand{\N}{\mathbb{N}}
\newcommand{\R}{\mathbb{R}}
\newcommand{\PP}{{\mathbb P}}
\newcommand{\tin}{\text{ in }}
\newcommand{\ton}{\text{ on }}
\newcommand{\ra}{\rightarrow}
\newcommand{\bs}{\backslash}
\newcommand{\wt}{\widetilde}
\newcommand{\e}{{\varepsilon}}
\newcommand{\g}[1]{\boldsymbol{#1}}
\newcommand{\norm}[2]{\left\|{#1}\right\|_{#2}}
\newcommand{\inner}[2]{\left<{#1}\right>_{#2}}
\newcommand{\st}{\text{s.t.}}
\title{Stability for finite element discretization of some elliptic inverse parameter problems from internal data - application to elastography}
\author{Elie Bretin\thanks{Institut Camille Jordan, INSA de Lyon \& UCBL, 69003 Lyon, France.}\and Pierre Millien\thanks{Institut Langevin, CNRS UMR 7587, ESPCI Paris, PSL Research University, 1 Rue Jussieu, 75005 Paris, France.} \and Laurent Seppecher\thanks{Institut Camille Jordan, Ecole Centrale de Lyon \& UCBL, Lyon, F-69003, France.}}
\begin{document}
\maketitle

\begin{abstract}
In this article, we provide stability estimates for the finite element discretization of a class of inverse parameter problems 
of the form $-\nabla\cdot(\mu S) =  \g f$ in a domain $\Omega$ of $\R^d$. Here $\mu$ is the unknown parameter 
to recover, the matrix valued function $S$ and the vector valued distribution $\g f$ are known. As uniqueness
is not guaranteed in general for this problem, we prove a Lipschitz-type stability estimate in an
hyperplane of $L^2(\Omega)$. This stability is obtained through an adaptation of the so-called
discrete \emph{inf-sup} constant or LBB constant to a large class of first-order differential operators. 
We then provide a simple and original discretization based on hexagonal finite element that satisfies the discrete
stability condition and shows corresponding numerical reconstructions. The obtained algebraic inversion method is efficient
as it does not require any iterative solving of the forward problem and is very general as it does not require any 
smoothness hypothesis for the data nor any additional information at the boundary.
\end{abstract}



\def\keywords2{\vspace{.5em}{\textbf{  Mathematics Subject Classification
(MSC2000).}~\,\relax}}
\def\endkeywords2{\par}
\keywords2{65J22, 65N21, 35R30, 65M60}

\def\keywords{\vspace{.5em}{\textbf{ Keywords.}~\,\relax}}
\def\endkeywords{\par}
\keywords{Inverse problems, Reverse Weak Formulation, Inf-Sup constant, Linear Elastography, Finite Element Method}

\section{Introduction}


This work deals with inverse problems of the form
\begin{equation}\label{eq:1}
-\nabla\cdot(\mu S) =  \g f\quad\tin\Omega,
\end{equation}
where $\Omega$ is a smooth bounded domain of $\R^d$, $d\geq 2$ and where $\mu \in L^\infty(\Omega)$ is the unknown parameter map. In this problem, $S\in L^\infty(\Omega,\R^{d\times d})$ and $\g f\in H^{-1}(\Omega,\R^d)$ are given from some measurements and may contain noise. If one defines the first order differential operator 
\begin{equation}\label{eq:T1}
\begin{aligned}
T:L^\infty(\Omega)\subset L^2(\Omega)&\to H^{-1}(\Omega,\R^d)\\
\mu &\mapsto -\nabla\cdot(\mu S),
\end{aligned}
\end{equation}
the inverse parameter problem that we aim to solve can be expressed as 
\begin{equation}\label{eq:pbconti}
\text{Find }\mu\in L^\infty(\Omega)\quad\text{s.t.}\quad T\mu = \g f.
\end{equation}
As the right-hand side $\g f$ belongs to $H^{-1}(\Omega,\R^d)$ the meaning of this problem as to be understood through its corresponding Reverse Weak  Formulation (RWF):
\begin{equation}\label{eq:RWF}
\text{Find }\mu\in L^\infty(\Omega)\quad\text{s.t.}\quad\inner{T\mu,\g v}{H^{-1},H^1_0} = \inner{T\mu,\g v}{H^{-1},H^1_0},\quad\forall \g v\in H^1_0(\Omega,\R^d).
\end{equation}
In this inverse problem, we do not assume the knowledge of any information on $\mu$ at the boundary nor additional smoothness hypothesis. Note that the case $\g f=\g 0$ can be considered and corresponds to the determination of the null space the operator $T$.

The goal of the present paper is to investigate the stability properties of the discretized version of the problem \eqref{eq:RWF} and to provide error estimates based on the properties of the discretization spaces and on the discretized 
approximation of the operator $T$. These estimates do not require any regularization technique. More precisely, given a finite dimensional operator $T_h:M_h\ra V_h'$ and $\g f_h\in V_h'$ where $M_h$ and $V_h$ are finite dimensional subspaces that approach $M:= L^2(\Omega)$ and $V :=  H^{1}_0(\Omega,\R^d)$ 
 respectively,  we seek conditions on $M_h$, $V_h$ and $T_h$ for the $L^2$-stability of the following discretized problem:
 \begin{equation}\label{eq:Th}
\text{Find }\mu_h\in M_h\quad\st\quad T_h\mu_h = \g f_h.
\end{equation}
We also give conditions that guarantee the convergence of $\mu_h$ to $\mu$ for the $L^2$-norm. In most cases, the stability only occurs in an hyperplane of $L^2(\Omega)$. This leads to a remaining scalar uncertainty that can be resolved using a single additional scalar information on $\mu$.

%
%
%
%

The originality of this work lies here on the Reverse Weak Formulation \eqref{eq:RWF} that exhibits the unknown parameter $\mu$ as the solution of a weak linear differential problem in the domain $\Omega$ without boundary condition. Hence the uniqueness is not guaranteed at first look and the stability has to be considered with respect to some possible errors on both $\g f$ and $T$. As we will see, the error term $T_h-T$ is not controlled in $\cL\left(L^2(\Omega),H^{-1}(\Omega,\R^d)\right)$ (definition in Section \ref{sec:discretisation}) in general but only for a weaker norm (see Subsection \ref{sub:interp}). This creates difficulties that are not covered by the classic literature on the theory of perturbations of linear operators.


\subsection{Scientific context and motivations}

Elastography is an imaging modality that aims at reconstructing the mechanical properties of biological tissues. The local values of  the elastic parameters can be used as a discriminatory criterion for differentiating healthy tissues from diseased tissues \cite{sarvazyan1995biophysical}. While numerous modalities of elastography exist (see the for example \cite{gennisson2013ultrasound,parker2010imaging,doyley2012model,brusseau20082}), the most common procedure is to use an auxiliary imaging method (such as ultrasound imaging, magnetic resonance imaging, optical coherence tomography \ldots) to measure the displacement field $\g u$ in a medium when a mechanical perturbation is applied. See \cite{sherina2021challenges} and inside references for recent advances on this point. The inverse problem can be formulated as recovering the shear modulus $\mu$ in the linear elastic equation\begin{equation}\label{eq:elastic}
-\nabla\cdot(2\mu\cE(\g u))-\nabla(\lambda\nabla\cdot \g u) =  \g f\quad\tin\Omega,
\end{equation}
where $\g u$ and $\g f$ are given in $\Omega$ and $\lambda$ ca be assumed known in $\Omega$. The term $\cE(\g u)$ denotes the strain matrix which is the symmetric part of the gradient of $\g u$. 
The stability of this inverse problem has been extensively studied under various regularity assumptions for the coefficients to be reconstructed \cite{ammari2015stability,bal2015reconstruction,widlak2015stability,hubmer2018lame}. Recently, in \cite{ammari2021direct} the authors introduced a new inversion method based on a finite element discretization of equation \eqref{eq:1} where $S:=2 \cE(\g u)$. 
A study of the linear operator $T$ defined by \eqref{eq:T1} or by the equivalent weak formulation 
\begin{equation}\label{eq:T1VF}
\begin{aligned}
\inner{T\mu,\g v}{H^{-1},H^1_0}:=\int_\Omega\mu S:\nabla \g v,\quad \forall \g v\in H^1_0(\Omega,\R^{d\times d})
\end{aligned}
\end{equation}
showed that, under a piecewise smoothness hypothesis on $S$ and under an assumption of the form $|\det(S)|\geq c>0\ a.e.$ in $\Omega$, the operator $T$ has a null space of dimension one at most and is a closed range operator. This ensures the theoretical stability of the reconstruction in the orthogonal complement of the null space. However, depending on the choice of discretization spaces, the discretized version of $T$ may not satisfy the same properties and numerical instability may be observed. For instance, in \cite{ammari2021direct} the authors approach \eqref{eq:T1VF} using the classical pair $(\PP^0,\PP^1)$ of finite element spaces. As it could have been expected, they faced a numerical instability that was successfully overcome by using a $TV$-penalization technique. 

\begin{remark} The classic elliptic theory says that the strain matrix belongs to $L^2(\Omega,\R^{d\times d})$. Here, we add the hypothesis $S\in L^\infty(\Omega)(\Omega,\R^{d\times d})$ in order to control the error on $\mu$ in the Hilbert space $L^2(\Omega)$. This smoothness hypothesis is not very restrictive as it is known that the strain is bounded as soon as the elastic parameters are piecewise smooth with smooth surfaces of discontinuity. 
\end{remark}


Let us point out here that inverse problems of the form \eqref{eq:1} may arise from various other physical situations. Note first that the reconstruction of the Young's modulus $E$ when the Poisson's ratio $\nu$ is known is very similar to the problem defined in \eqref{eq:elastic}. In this case the governing linear elastic equation reads $-\nabla\cdot \left(E\, \Sigma \right) =  \g f$ where $\Sigma := a_\nu\cE(\g u)+ b_\nu(\nabla\cdot \g u) I$ and $a_\nu:=1/(1+\nu)$ and $b_\nu:=\nu/((1+\nu)(1-2\nu)$ in dimension $d=3$. A second example is the electrical impedance imaging with internal data, where the goal is to recover the conductivity $\sigma$ in the scalar elliptic equation $-\nabla\cdot(\sigma \nabla u) = 0$. If one can measure two potential fields $u_1$ and $u_2$ solutions of the previous equation and defines $S:=[\nabla u_1\ \nabla u_2]$, then the problem reads $-\nabla\cdot(\sigma S) = \g 0$. A third example is a classical problem corresponding to the particular case where $S$ is the identity matrix everywhere. In this case, the problem reads $-\nabla\mu =  \g f$ which is the inverse gradient problem.

The properties of the gradient operator $\nabla:L^2(\Omega)\to H^{-1}(\Omega,\R^d)$ and its discretization have been extensively studied in particular in the context of fluid dynamics and some tools developed in this framework are useful to treat our more general problem. For the reader convenience, let us recall the most important property which ensures the existence of a bounded left-inverse.

Hence, in the case where $S$ is the identity matrix everywhere, \emph{i.e.} $T:=-\nabla$, the operator $T$ is known to be a closed range operator from $L^2(\Omega)$ to $H^{-1}(\Omega,\R^d)$ if $\Omega$ is a Lipschitz domain (see \cite[p.99]{tartar2006introduction} and references within).  One can write
\begin{align}\nonumber
\norm{q}{L^2(\Omega)} \leq C \norm{\nabla q}{H^{-1}(\Omega)} \quad \forall q\in L^2_0(\Omega),
\end{align}
where $C>0$. The norm of the pseudo-inverse of the gradient in $H^{-1}(\Omega,\R^d)$ is closely related with the \emph{inf-sup} condition of the divergence:

\begin{align}\label{eq:betagrad}
\beta := \inf_{q\in L^2_0(\Omega)} \sup_{\g v\in H^1_0(\Omega,\R^d)} \frac{\int_\Omega (\nabla\cdot \g v)q }{\norm{\g v}{H^{1}_0(\Omega)}\norm{q}{L^2(\Omega)}}>0
\end{align}
Indeed, we have $C=1/\beta$. Since the closed-range property of the gradient is equivalent to the surjectivity of the divergence in $L^2_0(\Omega)$, the study of behavior of $\beta$ is an important step  in establishing  the well-posedness and stability of the Stokes problem \cite[Chap. I, Theorem 4.1]{girault1986p}. The constant $\beta$ is also known as the \emph{LBB} constant (for Ladyzhenskaya-Babuska-Brezzi). It is well known that in general, the constant $\beta$ may not behave well in finite element spaces, and may vanish when the mesh size goes to zero. More precisely, if one considers discrete spaces $M_h\subset L^2(\Omega)$ and $V_h \subset H^{1}_0(\Omega,\R^d)$ with discretization parameter $h>0$, the associated discrete \emph{inf-sup} constant given by
\begin{align}\nonumber
\beta_h :=  \inf_{\substack{q\in M_h\\ q\perp 1}} \sup_{\g v\in V_h}  \frac{\int_\Omega (\nabla\cdot \g v)q }{\norm{\g v}{H^{1}_0(\Omega)}\norm{q}{L^2(\Omega)}}\end{align}
may not satisfy the discrete \emph{inf-sup} condition $\forall h>0, \beta_h\geq \beta^* >0$. Pairs of finite element spaces that satisfy the discrete \emph{inf-sup} condition are known as \emph{inf-sup} stable elements and play an important role in the stability of the Galerkin approximation for the Stokes problem. We refer to \cite{bernardi2016continuity} for more details on the \emph{inf-sup} constant of the gradient and its convergence.

\subsection{Main results}

 Inspired by this approach, we introduce a generalization of the \emph{inf-sup} constant and a corresponding definition of the discrete \emph{inf-sup} constant that are suitable for operators of type \eqref{eq:T1} in particular. A major difference with the classical definition of the \emph{inf-sup} constant of the gradient is that, here, the operator $T$ may contain measurement noise and may have a trivial null space. 
 
 In a general framework, consider $T\in\cL(M,V')$ where $M$ and $V$ are two Hilbert spaces. The problem $T\mu=\g f$ is approached by a finite dimensional problem $T_h\mu_h=\g f_h$ where $T\in\cL(M_h,V_h')$ and $M_h$, $V_h$ approach $M$ and $V$ respectively.
  
 The first main goal of this work is to provide a stability condition with respect to the $M$-norm for the discrete problem based on the associated discrete \emph{inf-sup} constant. We consider the stability with respect to both the noise and the interpolation error on the right-hand side $\g f$ and on the operator $T$ itself. The case $\g f=\g 0$ corresponds to a null space identification problem and the condition $\norm{\mu}{M}=1$ is added. As $T$ may have a null space of dimension one, the stability condition when $\g f\neq \g 0$ is only proved in an hyperplane of $M$ (the orthogonal complement of the approximated null space). The uniqueness of the reconstruction of $\mu$ is then obtained up to a scalar constant. 
Moreover,  we provide quantitative error estimates. They depends on the  discrete \emph{inf-sup} constant and can be explicitly computed in all practical situations dealing with experimental data. These estimates allow for a control of the quality of the reconstruction in the pair of approximation spaces $(M_h,V_h)$ directly from the noisy interpolated data. 
The behavior of the discrete \emph{inf-sup} constant with respect to the discretization parameter $h$ gives a practical criterion for the convergence of $\mu_h$ towards $\mu$.

The present paper is closely linked to the sensitivity analysis and discretization analysis for the Moore-Penrose generalized inverse of $T$ when $T$ is a closed range  operator. There exist a vast litterature on this subject (see \cite{ben2003generalized,ding1996perturbation,yang2010some,huang2012stable} and references herein) as well as on the finite dimensional interpolation of the generalized inverse \cite{du2008finite}.
However, there are fundamental differences between the present work and the existing literature. First, we do not know here whether the operator $T$ has closed range. Second, we perform a sensitivity analysis of the left inverse of $T\in \cL\left(M,V'\right)$ under perturbations that are controlled in a weaker norm. More precisely, perturbations are controlled here in $\cL(E,V')$ where $E\subset M$ is a  Banach space dense in $M$. This might seem a technical issue but it is mandatory if one wants to work with discontinuous parameters $\mu$ and $S$. This choice is motivated by the applications in bio-medical imaging where, in most cases, the biological tissues exhibit discontinuities in their physical properties.  For instance, in the linear elasticity inverse problem (see equation \eqref{eq:elastic}) the matrix $S=2 \cE(\g u)$  has the same surfaces of discontinuities than the shear modulus of the medium and cannot be approached in $L^\infty(\Omega,\R^{d\times d})$ by smooth functions. This leads to perturbations of $T$ in  $\cL\left(L^\infty(\Omega),H^{-1}(\Omega,\R^d)\right)$ instead of  $\cL\left(L^2(\Omega),H^{-1}(\Omega,\R^d)\right)$. More details and examples are given in Subsection \ref{sub:interp}.

\subsection{Outline of the paper}

The article is organized as follows: 
In Section \ref{sec:discretisation}, we describe the Galerkin approximation of the problem \eqref{eq:pbconti} and define all the approximation errors involved. 
In Section \ref{sec_inf_sup}, we generalize the notion of \emph{inf-sup} constant to any operator $T\in \cL(M,V')$ and we prove in Theorem \ref{theo:usc} the upper semi-continuity of the discrete \emph{inf-sup} constant. This is an asymptotic comparison between the discrete and the \emph{continuous} \emph{inf-sup} constants.
In Section \ref{sec3} we give and prove the main stability estimates (Theorems \ref{theo1}, \ref{theo:alpha} and \ref{theo2}) based on the discrete version of the \emph{inf-sup} constant just defined.  In Section \ref{sec5} we present various numerical inversions, including stability tests and numerical computations of the \emph{inf-sup} constant for various pairs of finite element spaces. We also introduce in this section a pair of finite element spaces based on an hexagonal tilling of the domain $\Omega$. It shows excellent numerical stability properties when compared to some more classical pair of discretization spaces.

\section{Discretization using the Galerkin approach} \label{sec:discretisation} 
 We describe the Galerkin approximation of problem \eqref{eq:pbconti} a give the definitions of the various errors of approximation. 

\subsection{General notations}
In all this work, $M$ and $V$ are two Hilbert spaces with respective inner products denoted $\inner{.,.}M$ and $\inner{.,.}V$. We denote $E\subset M$ a Banach space dense in $M$. The space $V':=\cL(V,\R)$ is the space of the bounded linear forms on $V$ endowed with the operator norm. The duality hook between $V'$ and $V$ is denoted $\inner{.,.}{V',V}$. The space $\cL(M,V')$ is the space of the bounded linear operator from $M$ to $V'$ endowed with the operator norm written $\norm{.}{M,V'}$. For any $T\in\cL(M,V')$, we denote its null space by $N(T)$.

\begin{example}\label{ex:1} In the case of the inverse elastography problem using the operator $T$ defined in \eqref{eq:T1}, we take $M:=L^2(\Omega)$, $V:=H^1_0(\Omega,\R^d)$, $E:=L^\infty(\Omega)$ and so $V'=H^{-1}(\Omega,\R^d)$. Here $H^1_0(\Omega,\R^d)$ is the space of all squared integrable vector-valued fonctions $\g v$ on $\Omega$ such that $\nabla \g v$ is also square integrable and such that its trace on $\partial \Omega$ vanishes.  The space $H^{-1}(\Omega,\R^d)$ is the topological dual of $H^1_0(\Omega,\R^d)$.
\end{example}

\subsection{Spaces discretization and projection}

In order to approach the problem \eqref{eq:pbconti} by a finite dimensional problem, we first approach spaces $M$ and $V$ by finite dimensional spaces.
 
 \begin{definition} For any Banach space $X$, we say that a sequence subspaces $(X_h)_{h>0}$ approaches $X$ if this sequence is asymptotically dense in $X$. That means that for any $x\in X$,
there exists a sequence $(x_h)_{h>0}$ such that $x_h\in X_h$ for all $h>0$ and $\norm{x_h-x}{X}$ converges to zero when $h$ goes to zero. We naturally endow $X_h$ with the restriction of the $X$-norm to make it a normed vector space. 
 \end{definition}

 Consider now two sequences of subspaces $(M_h)_{h>0}$ and $(V_h)_{h>0}$ that approach respectively the Hilbert spaces $M$ and $V$. Remark that $E_h:=E\cap M_h$ is dense in $M_h$ so $E_h=M_h$ for any $h>0$ but $E_h$ is endowed with $E$-norm. 
 
 \begin{example}\label{ex:2} In the case of Example \ref{ex:1}, $M=L^2(\Omega)$ and one can chose $M_h$ as the classical finite element space $\PP^0(\Omega_h)$, \emph{i.e.} the class of piecewise constant functions over a subdivision of $\Omega$ by elements of maximum diameter $h>0$ \cite{girault1986p}.
 \end{example}

 \begin{definition} We denote $\pi_h:M\to M_h$ the orthogonal projection form $M$ onto $M_h$. It naturally satisfies $ \lim_{h\to 0}\norm{\pi_h m-m}{M} = 0$ and $\norm{\pi_h m}{M} \leq \norm{m}{M}$, for all $m\in M$.
 We also denote $p_h:M\backslash N(\pi_h)\to M_h$ the normalized projection form $M$ onto $M_h$ defined by $
 p_h(m):=\frac{\pi_h m}{\norm{\pi_h m}{M}},\quad \forall m\in M,\ \pi_h m\neq 0.$
Note that if $\norm{m}{M}=1$, $p_h(m)$ satisfies $\norm{p_h(m)-m}{M}\leq \sqrt{2}\norm{\pi_hm-m}{M}$.
 \end{definition}
 
 In the following, we will assume that $\pi_h$ is also a contraction for the $E$-norm. That means,
 \begin{equation}\label{eq:Econt}
 \forall m\in E\subset M,\quad \norm{\pi_h m}{E}\leq \norm{m}{E}.
 \end{equation}
This hypothesis is true in the case $E:=L^\infty(\Omega)$, $M:=L^2(\Omega)$ and $M_h:=\PP^0(\Omega_h)$ as in Exemple \ref{ex:2}.

\begin{definition} For any non zero $\mu\in M$, we define its relative error of interpolation onto $M_h$ by
\begin{equation}\nonumber
\e^{\text{int}}_h(\mu):=\frac{\norm{\pi_h\mu-\mu}{M}}{\norm{\mu}{M}}.
\end{equation}
\end{definition}

As the sequence of subspaces $V_h\subset V$ approaches $V$, we define $V_h'$ the space of all linear form over $V_h$ endowed with the norm
\begin{equation}\nonumber
\norm{\g f}{V_h'}:=\sup_{\g v\in V_h}\frac{\inner{\g f,\g v}{V',V}}{\norm{\g v}{V}}.
\end{equation}
Note that $\g f\mapsto \g f|_{V_h}$ defines a natural map from $V'$ onto $V_h'$ and then any $\g f\in V$ naturally defines a unique element $\g f|_{V_h}$ of $V_h'$ (and we continue to call it $\g f$).
Then any non zero right-hand side linear form $\g f\in V'$ is approached by a finite dimensional linear form  $\g f_h\in V_h'$ and we define its relative error of interpolation as follows.

\begin{definition} The relative error of interpolation $\e^{\text{rhs}}_h$ between $\g f\neq \g 0$ and $\g f_h$ is defined by $\displaystyle{
\e^{\text{rhs}}_h:=\frac{1}{\norm{\g f}{V'}}\sup_{\g v\in V_h}\frac{\left<\g f_h-\g f,\g v\right>_{V_h',V_h}}{{\norm{\g v}{V_h}}}}.$
\end{definition}


\subsection{Interpolation of the operator} \label{sub:interp}


We approach the operator $T\in \cL(M,V')$ by a finite dimensional operator  $T_h\in \cL(M_h,V_h')$. The error of approximation is defined as the distance between $T$ and $T_h$ for the $\cL(E_h,V_h')$ norm which is weaker than assuming that the between $T-T_h$ is small in $\cL(M_h,V_h')$. We remind the reader that $E_h:=E\cap M_h$ endowed with the $E$-norm.

\begin{definition} The interpolation error $\e^{\text{op}}_h$ between $T$ and $T_h$ is defined by
 \begin{equation}\nonumber
\e^{\text{op}}_h:=\norm{T_h-T}{E_h,V_h'}:=\sup_{\mu\in E_h}\sup_{\g v\in V_h}\frac{\left<(T_h-T)\mu,\g v\right>_{V_h',V_h}}{\norm{\mu}{E}{\norm{\g v}{V}}}.
\end{equation}
\end{definition}
This error contains both the interpolation error over the approximation spaces and the possible noise in measurements used to build $T_h$.

\begin{remark} The reason of the choice of norms comes from the main application where $M:=L^2(\Omega)$, $E:=L^\infty(\Omega)$, $V:=H^1_0(\Omega,\R^d)$ and $T\mu:=-\nabla\cdot(\mu S)$ with $S\in L^\infty(\Omega,\R^{d\times d})$. This operator is approached by $T_h\mu:=-\nabla\cdot(\mu S_h)$ where $S_h$ is a discrete and possibly noisy version of $S$. In this case, the interpolation error $S_h-S$ is expected to be small in $L^2(\Omega,\R^{d\times d})$ but not in $L^\infty(\Omega,\R^{d\times d})$. This conduces to small interpolation error $\e^{\text{op}}_h$ thanks to the control
\begin{equation}\label{eq:Th-T}
\norm{(T_h-T)\mu}{H^{-1}(\Omega)}\leq \norm{S_h-S}{L^2(\Omega)}\norm{\mu}{L^\infty(\Omega)},\quad\forall\mu\in M_h.
\end{equation}
but $T_h-T$ as no reason to be small in $\cL(M_h,V_h')$ (See example \ref{ex:discontinous}). This definition of $\e^{\text{op}}_h$ matches well practical situations like medical imaging for instance where $S$ might be a discontinuous map with \emph{a priori} unknown surfaces of discontinuity. Therefore it makes sense to consider $S_h-S$ small in $L^2(\Omega,\R^{d\times d})$ but not in $L^\infty(\Omega,\R^{d\times d})$. The next example \ref{ex:discontinous} below explains this situation in dimension one.  
\end{remark}

\begin{example}\label{ex:discontinous} In dimension one, take $\Omega:=(-1,1)$, $M=L^2(\Omega)$, $E=$ $L^\infty(\Omega)$ and $V=H^1_0(\Omega)$. Take $S\in L^\infty(\Omega)$ and define $T\mu:=-(\mu S)'$. Fix $h>0$ and consider any uniform subdivision $\Omega_h\subset\Omega$ of size $h$ containing the segment $I_h:=(-h/2,h/2)$ (hence $0$ is not a knot). Define the interpolation spaces $M_h:=\mathbb{P}^0(\Omega_h)$, $V_h:=\mathbb{P}^1_0(\Omega_h)$.
Chose $S= 1+\chi_{(0,1)}$ and $S_h=1+\chi_{(\frac h2,1)}\in M_h$ and $T_h\mu:=-(\mu S_h)'$. An explicit computation gives
\begin{equation}\nonumber
\norm{S_h-S}{L^2(\Omega)}^2=\frac{h}{2}\quad\text{ i.e. }\quad \norm{S_h-S}{L^2(\Omega)}=\cO\left(\sqrt{h}\right).
\end{equation}
Thanks to \eqref{eq:Th-T}, we also get that $\norm{T_h-T}{E_h,V_h'}=\cO\left(\sqrt{h}\right)$. 

Consider now the sequence $\mu_h=h^{-1/2} \chi_{I_h}$ which satisfies $\norm{\mu_h}{L^2(\Omega)}=1$ and a basis test function $v_h\in V_h$ supported in $[-h/2,3h/2]$ and such that $v_h(h/2)=1$. It satisfies $\norm{v_h}{H^1_0(-1,1)}=\sqrt{2/h}$. We can write
\begin{equation}\nonumber
\inner{-(\mu_h(S_h-S))',v_h}{H^{-1},H^1_0}=\int_{I_h}\mu_h(S_h-S)v_h'=h^{-1/2},
\end{equation}
hence
\begin{equation}\nonumber
\sup_{v\in V_h}\frac{\inner{-(\mu_h(S_h-S))',v}{H^{-1},H^1_0}}{\norm{v}{H^1_0(-1,1)}}\geq \frac{\inner{-(\mu_h(S_h-S))',v_h}{H^{-1},H^1_0}}{\norm{v_h}{H^1_0(-1,1)}}=\frac{\sqrt 2}{2},
\end{equation}
and then $\norm{T_h-T}{M_h,V_h'}\geq \frac{\sqrt 2}{2}$. As a consequence $T_h-T$ is not getting small for the $\cL(M_h,V_h')$-norm. 
\end{example}

\section{The generalized \emph{inf-sup} constant}\label{sec_inf_sup}

In this section we generalize the notion of \emph{inf-sup} constant to any operators $T$ in $\cL(M,V')$. Let us first define three useful constants for such operators.

\begin{definition} For any $T \in \cL(M,V')$, we call
\begin{equation}\nonumber
\alpha(T):=\inf_{\mu\in M}\sup_{\g v\in V} \frac{\left<T\mu,\g v\right>_{V',V}}{\norm{\mu}{M}\norm{\g v}{V}}\quad\text{and}\quad \rho(T):=\sup_{\mu\in M}\sup_{\g v\in V} \frac{\left<T\mu,\g v\right>_{V',V}}{\norm{\mu}{M}\norm{\g v}{V}}.
\end{equation}
we also call $\delta(T):=\sqrt{\rho(T)^2-\alpha(T)^2}$.
\end{definition}

We now extend the notion of \emph{inf-sup} constant of the gradient operator to any
operators of $\cL(M,V')$. As the existence of a null space of dimension one is not guaranteed,
we first propose this very general definition of the generalized \emph{inf-sup} constant called $\beta(T)$. 

\subsection{Definition and properties}

\begin{definition}\label{def:infsup} The \emph{inf-sup} constant of direction $e\in M$, $e\neq 0$ of the operator $T\in\cL(M,V')$ is the non-negative number
\begin{equation}\nonumber
\beta_e(T):=\inf_{\substack{ \mu\in M\\ \mu\perp e}}\sup_{\g v\in V}\frac{\left<T\mu,\g v\right>_{V',V}}{\norm{\mu}{M}\norm{\g v}{V}}.
\end{equation}
The generalized \emph{inf-sup} constant of $T$ is now defined by
\begin{equation}\nonumber
\beta(T):=\sup_{\substack{ e\in M\\ \norm{e}{M}=1}}\beta_e(T).
\end{equation}
\end{definition}

It is mandatory here to show that this definition indeed extends the classic definition of the \emph{inf-sup} constant known for $\nabla$-type operators (with a null space of dimension one).

\begin{proposition}\label{prop:betacomp} Let $T\in\cL(M,V')$ and $z\in M$ such that $\norm{z}{M}=1$ and $\norm{T\, z}{V'}^2\leq \alpha(T)^2+\e^2$ for some $\e\geq 0$. We have
\begin{equation}\nonumber
\beta_z(T)^2\leq \beta(T)^2\leq\beta_{z}(T)^2+\e(\delta(T)+\e).
\end{equation}
In case where $\e=0$, it implies that $\beta(T)=\beta_{z}(T)$.
\end{proposition}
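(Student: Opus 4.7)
The lower bound $\beta_z(T)\leq\beta(T)$ is immediate from Definition \ref{def:infsup}, since $z$ is admissible in the supremum defining $\beta(T)$. For the upper bound, my plan is to prove that $\beta_e(T)^2\leq\beta_z(T)^2+\e(\delta(T)+\e)$ for every unit vector $e\in M$ and then take the supremum over $e$.

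Fix such an $e$ and, for arbitrary $\eta>0$, pick a near-minimizer $\hat\mu$ of $\beta_z(T)$: $\hat\mu\perp z$, $\norm{\hat\mu}{M}=1$ and $\norm{T\hat\mu}{V'}^2\leq\beta_z(T)^2+\eta$. With $t:=\inner{z,e}{M}$ and $s:=\inner{\hat\mu,e}{M}$, when $s^2+t^2>0$ I form the test vector
\[
\mu:=\frac{t\hat\mu-sz}{\sqrt{s^2+t^2}}\in\spn(z,\hat\mu),
\]
which is a unit vector orthogonal to $e$ by a direct check using $\hat\mu\perp z$. The degenerate case $s=t=0$ means $\hat\mu$ is already orthogonal to $e$, so $\beta_e(T)^2\leq\norm{T\hat\mu}{V'}^2\leq\beta_z^2+\eta$ trivially. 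Expanding then yields
\[
\norm{T\mu}{V'}^2=\frac{t^2\norm{T\hat\mu}{V'}^2-2st\,\inner{Tz,T\hat\mu}{V'}+s^2\norm{Tz}{V'}^2}{s^2+t^2}.
\]

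The delicate step, which I expect to be the main obstacle, is a sharp estimate on the cross term $\inner{Tz,T\hat\mu}{V'}$: a naive Cauchy--Schwarz gives the loose bound $\sqrt{(\alpha^2+\e^2)(\beta_z^2+\eta)}$, which only produces $\beta_e^2\leq\alpha^2+\e^2+\beta_z^2$ and misses the announced structure. The improvement comes from exploiting the self-adjoint positive operator $A:=T^*T-\alpha(T)^2 I\in\cL(M,M)$, whose operator norm equals $\delta(T)^2$ since $\alpha^2 I\preceq T^*T\preceq\rho^2 I$. Because $\hat\mu\perp z$, one can write
\[
\inner{Tz,T\hat\mu}{V'}=\inner{T^*Tz,\hat\mu}{M}=\inner{Az,\hat\mu}{M},
\]
so $|\inner{Tz,T\hat\mu}{V'}|\leq\norm{Az}{M}$. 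The classical inequality $\norm{Ax}{M}^2\leq\|A\|\inner{Ax,x}{M}$ (valid for any self-adjoint positive $A$ since $A^2\preceq\|A\|A$) then gives $\norm{Az}{M}^2\leq\delta^2(\norm{Tz}{V'}^2-\alpha^2)\leq\delta^2\e^2$, hence $|\inner{Tz,T\hat\mu}{V'}|\leq\delta\e$. Inserting this together with $\norm{Tz}{V'}^2\leq\alpha^2+\e^2$ and $\norm{T\hat\mu}{V'}^2\leq\beta_z^2+\eta$ into the expansion of $\norm{T\mu}{V'}^2$, and using $\alpha^2\leq\beta_z^2$ to bound the convex combination $(s^2\alpha^2+t^2\beta_z^2)/(s^2+t^2)\leq\beta_z^2$, together with the elementary inequalities $s^2/(s^2+t^2)\leq1$ and $2|st|/(s^2+t^2)\leq1$, yields $\norm{T\mu}{V'}^2\leq\beta_z(T)^2+\e^2+\e\delta+\eta$. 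Sending $\eta\to0$ and supremizing over $e$ closes the argument, and setting $\e=0$ forces the claimed equality $\beta(T)=\beta_z(T)$.
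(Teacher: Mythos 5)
Your proof is correct, and its skeleton coincides with the paper's: both reduce the upper bound to showing $\beta_e(T)^2\leq\beta_z(T)^2+\e(\delta(T)+\e)$ for each unit $e$, by testing $\beta_e$ against a vector in $\spn(z,m)\cap\{e\}^\perp$ (your normalized $(t\hat\mu-sz)/\sqrt{s^2+t^2}$ is, up to sign and normalization, exactly the paper's $az+m$ with $a=-\inner{m,e}{M}/\inner{z,e}{M}$; your formulation avoids the paper's separate case $e\perp z$), expanding the quadratic form $\norm{T\cdot}{V'}^2=\inner{S_T\cdot,\cdot}{M}$, and bounding the cross term by $\e\,\delta(T)$. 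The genuine difference is how that cross-term estimate is obtained. The paper isolates it as Lemma \ref{prop:app1} and proves it by perturbing $z$ along $p$ with a parameter $t$ and optimizing an auxiliary $\tau$; you instead observe that $A:=S_T-\alpha(T)^2I$ is positive self-adjoint with $\|A\|\leq\delta(T)^2$, that orthogonality $\hat\mu\perp z$ lets you replace $S_T$ by $A$ in the cross term, and then invoke $\norm{Az}{M}^2=\norm{A^{1/2}(A^{1/2}z)}{M}^2\leq\|A\|\inner{Az,z}{M}\leq\delta(T)^2\e^2$. This is a cleaner, coordinate-free derivation of the same inequality and makes transparent why the mixed term is $O(\e\delta)$ rather than what Cauchy--Schwarz alone would give; the paper's elementary optimization has the modest advantage of not requiring the square root $A^{1/2}$ (or the operator inequality $A^2\preceq\|A\|A$), but both are standard. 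Your use of an $\eta$-near-minimizer followed by $\eta\to0$ is equivalent to the paper's device of proving the bound for every unit $m\perp z$ and then taking the infimum over $m$. All the auxiliary inequalities you rely on ($\alpha(T)\leq\beta_z(T)$, $2|st|\leq s^2+t^2$) are valid, so the argument is complete.
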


The proof of this result uses the self-adjoint operator $S_T\in \cL(M)$ canonically associated with $T$. 

\begin{lemma}\label{lem:comp} For any $T \in \cL(M,V')$, there exists $S_T\in\cL(M)$ self-adjoint positive semi-definite such that for any $\mu\in M$, $\norm{T\mu}{V'}^2=\inner{S_T\mu,\mu}{M}$.
\end{lemma}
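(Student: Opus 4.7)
The plan is to exploit the Riesz representation theorem to turn $T$ into an operator between Hilbert spaces, then form its adjoint and compose.

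First I would invoke the Riesz isomorphism $J:V'\to V$, which is an isometric isomorphism characterized by $\inner{f,\g v}{V',V}=\inner{Jf,\g v}{V}$ for all $f\in V'$ and $\g v\in V$, and which satisfies $\norm{f}{V'}=\norm{Jf}{V}$. Composing with $T$ gives a bounded linear operator
\begin{equation*}
R:=J\circ T\in\cL(M,V),
\end{equation*}
whose norm coincides with that of $T$.

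Next, since $M$ and $V$ are Hilbert spaces, $R$ admits a Hilbert-space adjoint $R^*\in\cL(V,M)$ defined by $\inner{R\mu,\g v}{V}=\inner{\mu,R^*\g v}{M}$ for all $\mu\in M$ and $\g v\in V$. I would then set
\begin{equation*}
S_T:=R^*R\in\cL(M).
\end{equation*}
Standard facts give immediately that $S_T$ is self-adjoint (since $(R^*R)^*=R^*R^{**}=R^*R$) and positive semi-definite (since $\inner{S_T\mu,\mu}{M}=\norm{R\mu}{V}^2\ge0$).

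Finally, for the norm identity, I would compute for any $\mu\in M$:
\begin{equation*}
\inner{S_T\mu,\mu}{M}=\inner{R^*R\mu,\mu}{M}=\inner{R\mu,R\mu}{V}=\norm{R\mu}{V}^2=\norm{JT\mu}{V}^2=\norm{T\mu}{V'}^2,
\end{equation*}
where the last equality uses the isometry property of $J$. This yields the claimed identity and completes the construction. There is no real obstacle here; the only subtlety is that $V'$ is not a priori a Hilbert space in the formulation, so one must pass through the Riesz isomorphism before taking an adjoint, which is why $R$ rather than $T$ itself is the object to which we apply the $R^*R$ construction.
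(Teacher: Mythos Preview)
Your proof is correct and is essentially identical to the paper's: the paper calls the Riesz map $\Phi$ instead of $J$ and writes $S_T:=T^*\Phi T$, but since their $T^*:V\to M$ is characterized by $\inner{T\mu,\g v}{V',V}=\inner{\mu,T^*\g v}{M}$, one has $T^*=R^*$ and hence $T^*\Phi T=R^*R$, exactly your construction.
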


\begin{proof} Call $\Phi:V'\to V$ the Riesz isometric identification defined by $\inner{\Phi f,\g v}{V}=\inner{f,\g v}{V',V}$ for any $\g f\in V'$, $v\in V$. Call also $T^*:V\to H$ the adjoint operator of $T$. We have for any $\mu\in M$,\begin{equation}\nonumber
\norm{T\mu}{V'}^2=\norm{\Phi T \mu}{V}^2=\inner{T\mu,\Phi T\mu}{V',V}=\inner{\mu,T^*\Phi T\mu}M=\inner{S_T\mu,\mu}M.
\end{equation}
where $S_T:=T^*\Phi T:M\to M$ is a self-adjoint positive semi-definite operator.
\end{proof}

\begin{proof} (of Proposition \ref{prop:betacomp}) The first inequality comes from the definition of $\beta(T)$. For the second, take $e\in M$ of norm one and consider $m\in E\cap \{z\}^\perp$ of norm one. If $e\perp z$ then $z\in \{e\}^\perp$ and immediately $\beta_e(T)^2\leq \norm{T\, z}{V'}^2 \leq \alpha(T)^2+\e^2\leq \beta_{z}(T)^2+\e(\delta(T)+\e)$.

Suppose now that $\left<e,z\right>_M\neq 0$. Consider $a=-\left<m,e\right>_M/\left<z,e\right>_M$ and $\mu:=a z+m$. It is clear that $\mu\in \{e\}^\perp$ and $\norm{\mu}{M}^2=a^2+1$. Using Lemma \ref{lem:comp}, we write
\begin{equation}\nonumber
\begin{aligned}
\norm{T\mu}{V'}^2 &=\inner{S_T\mu,\mu}M=a^2\inner{S_Tz,z}M+2a\inner{S_Tz,m}M+\inner{S_Tm,m}M\\
&= a^2\norm{T\, z}{V'}^2+2a\inner{S_Tz,m}M+\norm{Tm}{V'}^2\\
&\leq (1+a^2)\norm{Tm}{V'}^2 + a^2\e^2+ 2|a|\left|\inner{S_Tz,m}M\right|.
\end{aligned}
\end{equation}
Using Proposition \ref{prop:app1} we bound $\left|\inner{S_Tz,m}M\right|$ by $\e \delta(T)$ and then
\begin{equation}\nonumber\begin{aligned}
\frac{\norm{T\mu}{V'}^2}{\norm{\mu}{M}^2} &\leq\norm{Tm}{V'}^2+\e^2+\e \delta(T)\\
\inf_{\substack{ \mu\in E\\ \mu\perp e}}\frac{\norm{T\mu}{V'}^2}{\norm{\mu}{M}^2} &\leq\norm{Tm}{V'}^2+\e(\delta(T)+\e)\\
\beta_e(T)^2&\leq \norm{Tm}{V'}^2+\e(\delta(T)+\e).\\
\end{aligned}\end{equation}
This last statement is true for any  $m\in M\cap \{z\}^\perp$ of norm one so we can take the infimum over $m$ to get $\beta_e(T)^2\leq \beta_{z}(T)^2+\e(\delta(T)+\e)$. We conclude now by taking the supremum over $e$.
\end{proof}

As a consequence of Proposition \ref{prop:betacomp}, the generalized \emph{inf-sup} constant has a simpler formula in the case of an operator with trivial null space. 

\begin{corollary}  \label{cor:def1} If $N(T)\neq \{0\}$, consider any $z\in  N(T)$ such that $\norm{z}M=1$. Then we have $\beta(T)=\beta_z(T)$.
\end{corollary}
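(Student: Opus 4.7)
The plan is a direct specialization of Proposition \ref{prop:betacomp} to the case $\e = 0$. Since by hypothesis $N(T) \neq \{0\}$, one can pick $z \in N(T)$ with $\norm{z}{M} = 1$. Then $Tz = 0$, so $\norm{Tz}{V'} = 0$. Moreover, the mere existence of a unit-norm element in $N(T)$ forces $\alpha(T) = 0$: the quantity $\alpha(T) = \inf_{\mu\in M} \norm{T\mu}{V'}/\norm{\mu}{M}$ is an infimum of non-negative numbers, and the value $0$ is attained at $\mu = z$.

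Consequently, the hypothesis $\norm{Tz}{V'}^2 \leq \alpha(T)^2 + \e^2$ of Proposition \ref{prop:betacomp} is satisfied with $\e = 0$, both sides being zero. That proposition then yields
\begin{equation*}
\beta_z(T)^2 \leq \beta(T)^2 \leq \beta_z(T)^2 + 0\cdot(\delta(T) + 0) = \beta_z(T)^2,
\end{equation*}
so $\beta(T) = \beta_z(T)$. This is exactly the conclusion already singled out in the final sentence of Proposition \ref{prop:betacomp}, so no additional work is needed.

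There is essentially no obstacle here, since all the real work (constructing the competitor $\mu = az + m$ and controlling the cross term $\inner{S_T z, m}{M}$) was done in the proof of Proposition \ref{prop:betacomp}. The conceptual role of the corollary is to confirm that Definition \ref{def:infsup} genuinely extends the classical notion recalled in the introduction: whenever $T$ admits a unit vector $z$ in its null space (for instance $T = -\nabla$ on $L^2(\Omega)$, whose null space is spanned by the normalized constant $z = 1/\sqrt{|\Omega|}$), the outer supremum over unit directions $e$ in Definition \ref{def:infsup} is attained at $e = z$, and one recovers the classical one-parameter formula $\beta(T) = \inf_{\mu \perp z} \sup_{\g v} \inner{T\mu,\g v}{V',V}/(\norm{\mu}{M}\norm{\g v}{V})$.
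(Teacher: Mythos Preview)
Your proof is correct and follows exactly the paper's intended approach: the corollary is an immediate specialization of Proposition \ref{prop:betacomp} to $\e=0$, as already flagged in that proposition's final sentence.
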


If $T=\nabla$, the classic definition of $\beta(\nabla)$ given in \eqref{eq:betagrad} matches the definition \ref{def:infsup}.

\begin{remark} This corollary leads to an alternative definition of $\beta(T)$ which does not depend on the choice of $z$ in $ N(T)$ (even for a dimension greater than one). Moreover, we see that $\beta(T)>0$ implies $\dim N(T)=1$. 
\end{remark}

It is possible to extend a little this corollary to a class of operators with trivial null space if the infimum value of the operator on the unit sphere is reached.

\begin{corollary}\label{cor:def2} If there exists $z\in M$ such that $\norm{z}{M}=1$ and $\norm{T z}{V'}=\alpha(T)$, Then we have $\beta(T)=\beta_z(T)$.
\end{corollary}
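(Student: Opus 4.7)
The plan is to recognize Corollary \ref{cor:def2} as the degenerate case $\e=0$ of Proposition \ref{prop:betacomp}, so that essentially no new work is required beyond matching the hypothesis. Concretely, if $z\in M$ is a unit vector on which $\norm{T\cdot}{V'}$ attains its infimum $\alpha(T)$ over the unit sphere, then writing $\norm{Tz}{V'}^2=\alpha(T)^2+0^2$ places $z$ under the assumption of Proposition \ref{prop:betacomp} with $\e=0$.

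Once this identification is made, I would directly invoke the two-sided bound
\[
\beta_z(T)^2\leq \beta(T)^2\leq \beta_z(T)^2+\e(\delta(T)+\e)
\]
from Proposition \ref{prop:betacomp} and observe that setting $\e=0$ collapses the upper and lower bounds, so that $\beta(T)^2=\beta_z(T)^2$, and hence $\beta(T)=\beta_z(T)$ since both quantities are non-negative by definition.

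There is no real obstacle here: all the substantive content---the representation of $\norm{T\mu}{V'}^2$ as the quadratic form $\inner{S_T\mu,\mu}{M}$ via the self-adjoint operator of Lemma \ref{lem:comp}, together with the cross-term estimate $|\inner{S_Tz,m}{M}|\leq \e\,\delta(T)$---has already been carried out inside the proof of Proposition \ref{prop:betacomp}. It is worth remarking that Corollary \ref{cor:def1} is the particular instance in which $\alpha(T)=0$ is realized by any unit vector of $N(T)$; the present corollary extends the identity $\beta(T)=\beta_z(T)$ to operators with possibly trivial kernel, as soon as one knows that the minimization problem defining $\alpha(T)$ admits a minimizer (for instance when $M$ is finite dimensional, which is precisely the setting of the discrete inf-sup constants considered later).
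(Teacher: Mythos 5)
Your proof is correct and is exactly the argument the paper intends: Corollary \ref{cor:def2} is the $\e=0$ case of Proposition \ref{prop:betacomp}, where the two-sided bound $\beta_z(T)^2\leq\beta(T)^2\leq\beta_z(T)^2+\e(\delta(T)+\e)$ collapses to an equality. The paper leaves this corollary without a written proof precisely because it follows immediately from that proposition, so your identification matches the intended route.
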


\begin{remark} This corollary leads to an alternative definition of $\beta(T)$ which does not depend on the choice of $z$ and extends the definition \ref{cor:def1}. Moreover the condition is fulfilled in particular if $T$ is a finite rank or finite dimensional operator.
\end{remark}

If the infimum value $\alpha(T)$ is not reached on the unit sphere, we keep the general definition \ref{def:infsup}.

\subsection{Discrete \emph{inf-sup} constant} 

The different constants related to the approximated operator $T_h\in \cL(M_h,V_h')$ comes from the same definition than for the operator $T\in\cL(M,V')$. Simply remark that as $T_h$ is a finite dimensional operator, the infimum in 
\begin{equation}\label{eq:ah}
\alpha(T_h):=\inf_{\mu\in M_h}\sup_{\g v\in V_h} \frac{\left<T_h\mu,\g v\right>_{V_h',V_h}}{\norm{\mu}{M}\norm{\g v}{V}}
\end{equation}
is reached by a direction $z_h\in M_h$ such that $\norm{z_h}{M}=1$. This means that $\norm{T_hz_h}{V_h'}=\alpha(T_h)$. As a consequence, following Corollary \ref{cor:def2}, the \emph{inf-sup} constant of $T_h$ is given by
\begin{equation}\label{eq:bh}
\beta(T_h):=\inf_{\substack{ \mu\in M_h\\ \mu\perp z_h}}\sup_{\g v\in V_h} \frac{\left<T_h\mu,\g v\right>_{V_h',V_h}}{\norm{\mu}{M}\norm{\g v}{V}}.
\end{equation}

This discrete \emph{inf-sup} constant is the key element to establish the stability of the discrete inverse problem and as we will see, its behaviors when $h\to 0$ will determine the convergence of the solution of the discrete problem to the exact solution. In a similar way than for the classic \emph{inf-sup} constant, the behavior of the discrete $\inf-\sup$ constant $\beta(T_h)$ can be catastrophic in the sense that it can vanish to zero if $h\to 0$. This strongly depends on the choice of interpolation pair of spaces $(M_h,V_h)$. For instance, if the discrete operator $T_h:M_h\to V_h'$ is under determinate, one may have $\beta(T_h)=0$. In a same manner than in \cite{costabel2015inf}, we give a definition of the discrete \emph{inf-sup} condition.

\begin{definition}\label{def:difc} We say that the sequence of operators $(T_h)_{h>0}$  satisfies the discrete \emph{inf-sup}  condition if there exists $\beta^*>0$ such that 
\begin{equation}\label{eq:discreteinfsup}
\beta^*\leq \beta(T_h),\quad \forall h>0.
\end{equation} 
\end{definition}

\begin{remark} In this work, we do not prove that the discrete \emph{inf-sup}  condition is satisfied by some specific choices of discretized operators $T_h:M_h\to V_h'$. We mention it here as a condition for uniform stability with respect to $h$, (see Theorems \ref{theo1} \ref{theo2}). We only aim at giving discrete stability estimates that involves $\beta(T_h)$ for a fixed $h>0$. 
 
\end{remark}

\subsection{Upper semi-continuity of the \emph{inf-sup} constant}\label{sec4}

A legitimate question about the discrete \emph{inf-sup} constant is to know if it can be greater that the continuous \emph{inf-sup} constant if the discretization spaces are well chosen. Inspired by a classic result on the discrete \emph{inf-sup} of the divergence that can be found in \cite{costabel2015inf} for instance, we state and prove in this subsection that the discrete \emph{inf-sup} constant is upper semi-continuous when $h\to 0$. This concludes that the discrete \emph{inf-sup} constant $\beta(T_h)$ is always asymptotically worse than the continuous \emph{inf-sup} constant $\beta(T)$.

\begin{theorem}[Upper semi-continuity]\label{theo:usc} If $\e^\text{op}_h\to 0$ when $h\to 0$, then 
\begin{equation}\nonumber
\limsup_{h\to 0}\alpha(T_h)\leq \alpha(T).
\end{equation}
Moreover, if the problem $T\, z=\g 0$ admits a solution $z\in E$ with $\norm{z}{M}=1$ and if the sequence $(T_h)_{h>0}$ satisfies the discrete \emph{inf-sup} condition (see Definition \ref{def:difc}), then
\begin{equation}\nonumber
\limsup_{h\to 0}\beta(T_h)\leq \beta(T).
\end{equation}
\end{theorem}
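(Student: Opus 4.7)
In both assertions I would produce, for each $h$, an explicit competitor in $M_h$ witnessing the desired bound, and control the action of $T_h$ on it by writing $T_h=T+(T_h-T)$ and using that $\e^{\text{op}}_h\to 0$ \emph{provided the $E$-norm of the competitor stays bounded}. For the first inequality, given $\eta>0$, I pick $\mu_0\in M$ with $\norm{\mu_0}{M}=1$ and $\norm{T\mu_0}{V'}\le \alpha(T)+\eta/2$; by density of $E$ in $M$ and continuity of $T$, there exists $\tilde\mu\in E$, still of unit $M$-norm, with $\norm{T\tilde\mu}{V'}\le\alpha(T)+\eta$. Setting $\mu_h:=p_h(\tilde\mu)\in M_h$, the chain
\[
\alpha(T_h)\le\norm{T_h\mu_h}{V_h'}\le\norm{T\mu_h}{V'}+\e^{\text{op}}_h\,\norm{\mu_h}{E},
\]
together with the $E$-contraction \eqref{eq:Econt} (yielding $\norm{\mu_h}{E}\le\norm{\tilde\mu}{E}/\norm{\pi_h\tilde\mu}{M}=O(1)$) and $\mu_h\to\tilde\mu$ in $M$, gives $\limsup_{h\to 0}\alpha(T_h)\le\norm{T\tilde\mu}{V'}\le \alpha(T)+\eta$. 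Letting $\eta\to 0$ closes this part.

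\textbf{Second inequality: candidate and the key convergence $z_h\to z$.} By Corollary \ref{cor:def1}, $\beta(T)=\beta_z(T)$. Fix $\eta>0$ and, exploiting $z\in E$, build $\tilde\mu\in E$ with $\norm{\tilde\mu}{M}=1$, $\tilde\mu\perp z$, and $\norm{T\tilde\mu}{V'}\le\beta(T)+\eta$ (approximate a near-minimizer of $\beta_z(T)$ by an element of $E$, subtract its component along $z\in E$, renormalize). Since \eqref{eq:bh} reads $\beta(T_h)=\beta_{z_h}(T_h)$, an admissible competitor is
\[
\mu_h:=\pi_h\tilde\mu-\inner{\pi_h\tilde\mu,z_h}{M}\,z_h\in M_h\cap z_h^\perp.
\]
The crux of the argument is to show that, up to a sign and along any subsequence realizing $\limsup\beta(T_h)$, one has $z_h\to z$ strongly in $M$; this will ensure $\inner{\pi_h\tilde\mu,z_h}{M}=\inner{\tilde\mu,z_h}{M}\to\inner{\tilde\mu,z}{M}=0$ and $\norm{\mu_h}{M}\to 1$. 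To get this convergence, I apply the discrete inf-sup condition \eqref{eq:discreteinfsup} to the residual $r_h:=\pi_h z-\inner{\pi_h z,z_h}{M}\,z_h\in M_h\cap z_h^\perp$: then $\beta^*\norm{r_h}{M}\le\norm{T_h r_h}{V_h'}\le\norm{T_h\pi_h z}{V_h'}+\norm{T_h z_h}{V_h'}$, and both right-hand terms tend to zero: the first by the Part~1 decomposition applied to $\pi_h z$ (using $Tz=\g 0$, $\pi_h z\to z$ in $M$, $\norm{\pi_h z}{E}\le\norm{z}{E}$, and $\e^{\text{op}}_h\to 0$), the second because $\norm{T_h z_h}{V_h'}=\alpha(T_h)\to 0$ by Part~1 applied with $\alpha(T)=0$. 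Hence $\norm{r_h}{M}\to 0$ and $|\inner{\pi_h z,z_h}{M}|\to 1$; flipping signs of $z_h$ if needed, $z_h\to z$ in $M$ along the subsequence.

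\textbf{Conclusion and main obstacle.} With this convergence in hand, the same decomposition $T_h\mu_h=T_h\pi_h\tilde\mu-\inner{\pi_h\tilde\mu,z_h}{M}\,T_h z_h$ gives $\limsup\norm{T_h\mu_h}{V_h'}\le\norm{T\tilde\mu}{V'}\le\beta(T)+\eta$, and since $\norm{\mu_h}{M}\to 1$ this forces $\limsup\beta(T_h)\le\beta(T)+\eta$ along the extracted subsequence; as any subsequence realizing the $\limsup$ admits such an extraction, the bound holds for the full sequence, and $\eta\to 0$ finishes the proof. The main technical obstacle is that $T_h-T$ is controlled only in the weaker norm $\cL(E_h,V_h')$, which forces every competitor to be chosen inside $E$ with a uniformly bounded $E$-norm; this is precisely why both the density of $E$ in $M$ and the $E$-contraction \eqref{eq:Econt} are indispensable at every step.
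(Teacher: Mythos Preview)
Your argument is correct and follows a genuinely different route from the paper's. The paper introduces the cone-like sets $C_h:=\{\mu\in M_h:\,(\e_h^{\text{op}})^{1/2}\norm{\mu}{E}\le\norm{\mu}{M}\}$, establishes a uniform bound $\norm{T_h\mu}{V_h'}\le\norm{T\mu}{V'}+(\e_h^{\text{op}})^{1/2}\norm{\mu}{M}$ on $C_h$, and then appeals to the appendix machinery on limits of subsets (Propositions~\ref{prop:lim1}, \ref{prop:limsup}, \ref{prop:wc}) to pass to the limit; for the $\beta$-part it invokes Theorem~\ref{theo1} (a forward reference) to obtain $z_h\to z$. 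You instead fix a near-minimizer, approximate it in $E$, project to $M_h$, and bound directly; and you prove $z_h\to z$ from scratch via the inf-sup bound applied to $r_h=\pi_hz-\inner{\pi_hz,z_h}{M}z_h$, which is essentially the core of Proposition~\ref{prop:theo1} distilled to what is needed here. Your approach is more elementary and self-contained (no set-limit framework, no forward reference), at the cost of handling the $E$-norm bookkeeping by hand at each step; the paper's $C_h$ device packages this bookkeeping once and for all, which pays off if one wants to reuse it. Two minor remarks: the subsequence extraction you mention is unnecessary, since after the harmless sign normalization $\inner{z_h,z}{M}\ge 0$ your argument gives $z_h\to z$ along the full sequence and the bound $\beta(T_h)\le\norm{T_h\mu_h}{V_h'}/\norm{\mu_h}{M}$ already holds for every small $h$; and in building $\tilde\mu\in E\cap\{z\}^\perp$ you implicitly use $z\in E$, which is exactly the hypothesis of the theorem and the reason the subtraction stays in $E$.
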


\begin{remark} This result is useful to understand that no discretization can get a better stability constant than $\beta(T)$. The question of the convergence of $\alpha(T_h)$ and $\beta(T_h)$ toward respectively $\alpha(T)$ and $\beta(T)$ is not treaded here; it is clearly not a simple question. It is already known as a difficult issue concerning \emph{inf-sup} constant of the gradient operator. See \cite{bernardi2016continuity} for more details about this question.  
\end{remark}

\begin{remark} An interesting consequence of this result is that, in case of an operator $T$ with non-trivial null space, the fact that $(T_h)_{h>0}$ satisfies the discrete \emph{inf-sup} condition implies that $\beta(T)>0$ which means that $T$ has closed range. It could be used to prove the closed range property for some operators. For instance, to our knowledge, the minimal conditions on $S\in L^\infty(\Omega,\R^{d\times d})$ that make $T:\mu\mapsto -\nabla\cdot(\mu S)$ a closed range operator are not known. 
\end{remark}

\begin{proof}(of Theorem \ref{theo:usc})  First define the sequence of set
\begin{equation}\nonumber
C_h:=\left\{\mu\in M_h\ |\ (\e_h^{\text{op}})^{1/2}\norm{\mu}{E}\leq \norm{\mu}{M}\right\}.
\end{equation}
For any $h>0$ and $\mu\in C_h$ we get
\begin{equation}\label{eq:tech1}\begin{aligned}
\norm{T_h\mu}{V_h'} &\leq \norm{T\mu}{V_h'} + \norm{(T_h-T)\mu}{V_h'}\leq \norm{T\mu}{V'} +\e_h^{\text{op}}\norm{\mu}{E}\\
&\leq \norm{T\mu}{V'} +(\e_h^{\text{op}})^{1/2}\norm{\mu}{M}.
\end{aligned}\end{equation}
Hence 
\begin{equation}\nonumber\begin{aligned}
\alpha(T_h) &\leq \frac{\norm{T\mu}{V'}}{\norm{\mu}{M}} + (\e_h^{\text{op}})^{1/2},\quad \forall\mu\in C_h\\
\alpha(T_h) &\leq \inf_{\mu\in C_h}\frac{\norm{T_h\mu}{V_h'}}{\norm{\mu}{M}} +  (\e_h^{\text{op}})^{1/2}.\\
\end{aligned}\end{equation}
This is true for any $h>0$ so $\displaystyle\limsup_{h\to 0}\alpha(T_h) \leq \limsup_{h\to 0}\inf_{\mu\in C_h}  \frac{\norm{T\mu}{V'}}{\norm{\mu}{M}}.$

As proposition \ref{prop:lim1} shows that $\lim_{h\to 0}C_h=M$ in the sense of Definition \ref{def:setlimit}, using that $T$ is continuous over the sphere $\{\mu\in M\ |\ \norm{\mu}{M}=1\}$ we can use Proposition \ref{prop:limsup} that says 
\begin{equation}\nonumber
\limsup_{h\to 0}\inf_{\mu\in C_h}  \frac{\norm{T\mu}{V'}}{\norm{\mu}{M}}\leq \inf_{\mu\in M}  \frac{\norm{T\mu}{V'}}{\norm{\mu}{M}}=\alpha(T)
\end{equation}
which gives the first result. 

For the second result, consider the sequence $(z_h)_{h>0}$ that satisfies $\norm{z_h}{M}=1$ and $T_hz_h=\alpha(T_h)$. Then $\beta(T_h)=\beta_{z_h}(T_h)$. For any $h>0$ and $\mu\in C_h\cap \{z_h\}^\perp$, similarly to \eqref{eq:tech1}, we get
\begin{equation}\nonumber
\norm{T_h\mu}{V_h'}\leq \norm{T\mu}{V'} +(\e_h^{\text{op}})^{1/2}\norm{\mu}{M},
\end{equation}
and then by definition of $\beta(T_h)$,
\begin{equation}\nonumber\begin{aligned}
\beta(T_h) &\leq \frac{\norm{T\mu}{V'}}{\norm{\mu}{M}} + (\e_h^{\text{op}})^{1/2},\quad \forall\mu\in C_h\cap \{z_h\}^\perp\\
\beta(T_h) &\leq \inf_{\mu\in C_h\cap \{z_h\}^\perp}\frac{\norm{T_h\mu}{V_h'}}{\norm{\mu}{M}}.\\
\end{aligned}\end{equation}
This is true for any $h>0$ so we deduce
\begin{equation}\nonumber
\limsup_{h\to 0}\beta(T_h) \leq \limsup_{h\to 0}\inf_{\mu\in C_h\cap \{z_h\}^\perp}  \frac{\norm{T\mu}{V'}}{\norm{\mu}{M}}.
\end{equation}
Now as Theorem \ref{theo1} says that the sequence $z_h$ converges to $z$ in $M$ and Proposition \ref{prop:wc} gives that $\lim_{h\to 0}C_h\cap \{z_h\}^\perp=M\cap\{z\}^\perp$, we can use Proposition \ref{prop:limsup} that says
\begin{equation}\nonumber
\limsup_{h\to 0}\inf_{\mu\in C_h\cap \{z_h\}^\perp}  \frac{\norm{T\mu}{V'}}{\norm{\mu}{M}}\leq \inf_{\mu\in M\cap \{z\}^\perp}  \frac{\norm{T\mu}{V'}}{\norm{\mu}{M}} = \beta_z(T)=\beta(T)
\end{equation}
which gives the second result. 
\end{proof}

\section{Error estimates}\label{sec3}

In this section, we state and prove the error estimates that are stability estimates for the approximated problem $T_h\mu_h=\g f_h$.

\subsection{Error estimate in the case $\g f=\g 0$}

\begin{theorem}[Error estimate in the case $\g f=\g 0$]\label{theo1} Consider $T\in\cL(M,V')$ and let $z\in E$ be a solution of $T\, z=\g 0$  with $\norm{z}{M}=1$ that satisfies $\e^{\text{int}}_h(z)\leq 1/2$. Fix $r$ such that $ \norm{z}{E}\leq r$ and consider $z_h\in M_h$ a solution of 
\begin{equation}\label{eq:zh}
\norm{T_hz_h}{V_h'} = \alpha(T_h)\quad\text{with}\quad\norm{z_h}{M}=1\quad\text{and}\quad \left<z_h,z\right>_M\geq 0.
\end{equation}
If $\beta(T_h)>0$ we have 
\begin{equation}\nonumber
\begin{aligned}
\norm{z_h-p_h(z)}{M}\leq \frac{4}{\beta(T_h)} \big(\sqrt{2}\, r\, \e^{\text{op}}_h+2\rho(T)\e^{\text{int}}_h(z)\big).\\
\end{aligned}
\end{equation}
Moreover, if $\e^{\text{op}}_h\to 0$ and $(T_h)$ satisfies the discrete \emph{inf-sup} condition \eqref{eq:discreteinfsup}, then $\Vert z_h - z\Vert_M \to 0$.
\end{theorem}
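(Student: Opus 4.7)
I would compare $z_h$ with the normalised projection $p_h(z)$ through a one-dimensional orthogonal decomposition inside $M_h$. Since $z_h$ minimises $\norm{T_h\mu}{V_h'}$ over the unit sphere of $M_h$, while $\beta(T_h)$ bounds $T_h$ from below on the $M$-orthogonal complement of $z_h$ in $M_h$ (see \eqref{eq:bh}), the component of $p_h(z)$ perpendicular to $z_h$ is controlled by $\norm{T_h p_h(z)}{V_h'}/\beta(T_h)$. This last quantity is small because $Tz = \g 0$, so only the interpolation errors $T_h-T$ and $\pi_h z - z$ will enter the estimate.

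\textbf{One-dimensional decomposition.} Set $a := \inner{p_h(z),z_h}{M}$ and $w_h := p_h(z) - a\, z_h$, so $w_h \in M_h$ and $w_h \perp z_h$. The identity $\inner{\pi_h z, z_h}{M} = \inner{z, z_h}{M}$ (coming from the self-adjointness of $\pi_h$ and $z_h \in M_h$) together with the sign hypothesis $\inner{z_h,z}{M}\geq 0$ gives $a \geq 0$; combined with $\norm{p_h(z)}{M}=\norm{z_h}{M}=1$ we get $a\in[0,1]$. A direct computation then yields
$$\norm{w_h}{M}^2 = 1-a^2 \quad\text{and}\quad \norm{z_h - p_h(z)}{M}^2 = (1-a)^2 + \norm{w_h}{M}^2 = 2(1-a),$$
so, using $1 - a^2 \geq 1 - a$ for $a\in[0,1]$, I obtain the key geometric inequality $\norm{z_h - p_h(z)}{M} \leq \sqrt{2}\,\norm{w_h}{M}$.

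\textbf{Controlling $w_h$, final estimate, and convergence.} Since $w_h \in M_h$ and $w_h \perp z_h$, definition \eqref{eq:bh} yields $\beta(T_h)\norm{w_h}{M} \leq \norm{T_h w_h}{V_h'}$. Writing $T_h w_h = T_h p_h(z) - a\, T_h z_h$, the minimality $\norm{T_h z_h}{V_h'} = \alpha(T_h) \leq \norm{T_h p_h(z)}{V_h'}$ together with $a\in[0,1]$ gives $\norm{T_h w_h}{V_h'} \leq 2\norm{T_h p_h(z)}{V_h'}$. Using $Tz=\g 0$, split
$$\norm{T_h p_h(z)}{V_h'} \leq \norm{(T_h-T)p_h(z)}{V_h'} + \norm{T(p_h(z)-z)}{V'}.$$
The first term is at most $\e^{\text{op}}_h \norm{p_h(z)}{E}$; the $E$-contraction hypothesis \eqref{eq:Econt} combined with $\norm{\pi_h z}{M} \geq 1 - \e^{\text{int}}_h(z) \geq 1/2$ yields $\norm{p_h(z)}{E} \leq \norm{z}{E}/\norm{\pi_h z}{M} \leq 2r$. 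The second term is at most $\rho(T)\norm{p_h(z)-z}{M} \leq \sqrt{2}\rho(T)\e^{\text{int}}_h(z)$ by the stated property of $p_h$. Chaining all the inequalities gives the announced bound (with some slack in the numerical constants that could be tightened). For the convergence assertion, under the discrete inf-sup condition $\beta(T_h) \geq \beta^*>0$, together with $\e^{\text{op}}_h \to 0$ and $\e^{\text{int}}_h(z) \to 0$ (the latter from $\pi_h z \to z$), the estimate forces $\norm{z_h - p_h(z)}{M} \to 0$, and a triangle inequality with $\norm{p_h(z) - z}{M} \leq \sqrt{2}\e^{\text{int}}_h(z)$ concludes. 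The most delicate point is the sign hypothesis $\inner{z_h,z}{M}\geq 0$: without it, $z_h$ and $p_h(z)$ could be nearly antipodal and only $\min\{\norm{z_h \pm p_h(z)}{M}\}$ would be controlled by $\norm{w_h}{M}$, which is precisely why the geometric identity $\norm{z_h - p_h(z)}{M}^2 = 2(1-a)$ relies on $a\geq 0$.
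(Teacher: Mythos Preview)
Your proof is correct and follows essentially the same route as the paper: the orthogonal decomposition $p_h(z)=a z_h+w_h$ with $w_h\perp z_h$, the geometric bound $\norm{z_h-p_h(z)}{M}\le\sqrt{2}\norm{w_h}{M}$, the use of $\beta(T_h)$ on $w_h$, and the splitting of $\norm{T_h p_h(z)}{V_h'}$ into an operator-error piece and an interpolation piece are exactly the ingredients the paper uses (it packages them into an auxiliary Proposition before specialising). Your direct chain in fact yields $\frac{4}{\beta(T_h)}\bigl(\sqrt{2}\,r\,\e^{\text{op}}_h+\rho(T)\e^{\text{int}}_h(z)\bigr)$, slightly sharper than the stated bound, because the paper's detour through $\alpha(T|_{M_h})+\e_1$ double-counts the interpolation term.
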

\begin{remark} 
\
\begin{enumerate}
\item Note that if $\e^{\text{op}}_h\to 0$, since $\alpha(T)  = 0$, we have, from Theorem \ref{theo:usc}, that $\alpha(T_h) \to 0$. Moreover, if the discrete inf sup condition (equation \eqref{eq:discreteinfsup}) is satisfied, then $z_h$  is defined uniquely.
\item It is necessary to have the priori bound $\Vert z\Vert_E\leq r$ to overcome the fact that $T_h-T$ is controlled in $\cL(E_h,V_h')$ but not in $\cL(M_h,V_h')$. See section \ref{sub:interp} for more details.
\item In the framework of the inverse elastography problem, the hypothesis $z\in E:=L^\infty(\Omega)$ is not restrictive as physical parameters of biological tissues have bounded values with some known \emph{a piori} bounds. 
\item The normalized projection $p_h(z)$ of $z$ is the best possible approximation of $z$ in $M_h$ with the constraint of norm one. 
\item Problem \eqref{eq:zh} admits a solution $z_h$ as $T_h$ is a finite dimensional operator. The condition $\left<z_h,z\right>_M\geq 0$ is only here to chose between the two solutions $z_h$ and $-z_h$ and is not of crucial importance. 
\item This result provides a quantitative error estimate as $\beta(T_h)$ can be computed from $T_h$ as the second smallest singular value (see Subsection\ref{sub:matrix}) and all the error terms on the right-hand side can be estimated (at least an upper bound can be given).

\end{enumerate}
\end{remark}

Before giving the proof of Theorem \ref{theo1}, we first establish and prove a more general result. 

\begin{proposition}\label{prop:theo1} Consider $T_1\in\cL(M,V')$ let $z_1\in E$ be a solution of 
\begin{equation}\nonumber
\norm{T_1\, z_1}{V'}\leq \alpha(T_1)  + \e_1\quad \text{with}\quad \norm{z_1}{M}=1
\end{equation}
where $\e_1\geq 0$. Fix $r\geq \norm{z_1}{E}$. For any $T_2\in \cL(M,V')$, consider a solution $z_2\in E$ of 
\begin{equation}\nonumber
\norm{T_2\, z_2}{V'}\leq \alpha(T_2)  + \e_2\quad \text{with}\quad\norm{z_2}{M}=1\quad\text{ and }\quad \left<z_1,z_2\right>_M\geq 0.
\end{equation}
If $\beta_{z_2}(T_2)>0$ we have $
\norm{z_2-z_1}{M}\leq \frac{\sqrt{2}}{\beta_{z_2}(T_2)} \left(2r\norm{T_2-T_1}{E,V'}+2 \alpha(T_1)+2\e_1+\e_2\right)$
and if $\e_2=0$ this reads $
\norm{z_2-z_1}{M}\leq \frac{\sqrt{2}}{\beta(T_2)} \left(2r\norm{T_2-T_1}{E,V'}+2 \alpha(T_1)+2\e_1\right).$
\end{proposition}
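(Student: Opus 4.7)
The plan is to decompose $z_1$ along the direction $z_2$ and its orthogonal complement, then use the directional inf-sup constant $\beta_{z_2}(T_2)$ to control the orthogonal part. Specifically, write $z_1 = a\, z_2 + w$ with $a := \langle z_1, z_2\rangle_M \in [0,1]$ and $w \perp z_2$, so that $a^2 + \norm{w}{M}^2 = 1$.

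Since $w \in \{z_2\}^\perp$, the definition of $\beta_{z_2}(T_2)$ yields $\beta_{z_2}(T_2)\norm{w}{M} \leq \norm{T_2 w}{V'}$. Then I would expand $T_2 w = T_2 z_1 - a\, T_2 z_2$, bound the first term by comparing $T_2$ with $T_1$ on $z_1$
\begin{equation}\nonumber
\norm{T_2 z_1}{V'} \leq \norm{T_2 - T_1}{E,V'}\norm{z_1}{E} + \norm{T_1 z_1}{V'} \leq r\norm{T_2 - T_1}{E,V'} + \alpha(T_1) + \e_1,
\end{equation}
and use the assumption on $z_2$ for the second term: $\norm{T_2 z_2}{V'} \leq \alpha(T_2) + \e_2$. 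The key observation is that $\alpha(T_2)$ is itself bounded by $\norm{T_2 z_1}{V'}$ (since $\norm{z_1}{M}=1$), so $\alpha(T_2) \leq r\norm{T_2-T_1}{E,V'} + \alpha(T_1) + \e_1$. Combining with $a \leq 1$ gives
\begin{equation}\nonumber
\norm{T_2 w}{V'} \leq 2r\norm{T_2-T_1}{E,V'} + 2\alpha(T_1) + 2\e_1 + \e_2.
\end{equation}

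The last step — and the slightly subtle one — is to pass from a bound on $\norm{w}{M}$ to a bound on $\norm{z_2 - z_1}{M}$. Writing $z_2 - z_1 = (1-a)z_2 - w$, orthogonality gives $\norm{z_2-z_1}{M}^2 = (1-a)^2 + \norm{w}{M}^2$. For $a \in [0,1]$ one has $(1-a)^2 = 1 - 2a + a^2 \leq 1 - a^2 = \norm{w}{M}^2$, so $\norm{z_2 - z_1}{M} \leq \sqrt{2}\,\norm{w}{M}$. This is exactly where the sign hypothesis $\langle z_1, z_2\rangle_M \geq 0$ becomes essential — without it, $a$ could be negative and the estimate would blow up. Dividing through by $\beta_{z_2}(T_2)$ yields the announced inequality, and the special case $\e_2 = 0$ follows from the definition $\beta(T_2) = \beta_{z_2}(T_2)$ obtained via Corollary \ref{cor:def2} applied to $T_2$ with $z = z_2$ (since then $\norm{T_2 z_2}{V'} = \alpha(T_2)$). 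The main obstacle is really just keeping track of which objects are controlled in the $E$-norm versus the $M$-norm: the difference $T_2 - T_1$ must be evaluated on $z_1$ in the $E$-norm, which is why the a priori bound $\norm{z_1}{E} \leq r$ enters the estimate multiplicatively.
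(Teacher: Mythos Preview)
Your argument is correct and follows essentially the same route as the paper's own proof: the same orthogonal decomposition $z_1 = a\,z_2 + w$, the same use of $\beta_{z_2}(T_2)$ to control $\norm{w}{M}$ via $\norm{T_2 w}{V'}$, the same trick of absorbing $\alpha(T_2)$ back into $\norm{T_2 z_1}{V'}$, and the same elementary inequality $(1-a)^2\leq 1-a^2$ on $[0,1]$ to pass from $\norm{w}{M}$ to $\norm{z_2-z_1}{M}$. Your justification of the $\e_2=0$ case via Corollary~\ref{cor:def2} is also correct and in fact more explicit than the paper's.
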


\begin{proof}   Write $z_1= t{z_2} + m$ where $t\in[0,1]$ and $m\perp z_2$. We have that $1=t^2+\norm{m}{M}^2$. Then $z_1-{z_2} = (t-1) z_2 + m$ and so $\norm{z_2-z_1}{M}^2 = 2(1-t)\leq 2(1-t^2)\leq 2\norm{m}{M}^2$. Then $\norm{z_2-z_1}{M}\leq \sqrt{2}\norm{m}{M}$. Now use the definition of $\beta_{z_2}(T_2)$ to write
\begin{equation}\nonumber\begin{aligned}
\beta_{z_2}(T_2)\norm{m}{M} &\leq \norm{T_2 m}{V'} \leq  \norm{T_2 z_1}{V'}+\norm{T_2 z_2}{V'} \leq   \norm{T_2 z_1}{V'}+\alpha(T_2)+{\e_2}\\ &\leq   2\norm{T_2 z_1}{V'}+{\e_2}\\
\end{aligned}\end{equation}
and remark that $\norm{T_2z_1}{V'}\leq \norm{(T_2-T_1)z_1}{V'}+\norm{T_1z_1}{V'}\leq r\norm{T_2-T_1}{E,V'} +\norm{T_1z_1}{V'}$ which implies that
\begin{equation}\nonumber
\norm{T_2 z_1}{V'}\leq  r\norm{T_2-T_1}{E,V'} + \alpha(T_1)+\e_1.
\end{equation}
We deduce that $
\beta_{z_2}(T_2)\norm{m}{M}\leq 2r\norm{T_2-T_1}{E,V'}+2 \alpha(T_1)+2\e_1+\e_2 $ 
and then $
\norm{z_2-{z_1}}{M}\leq \frac{\sqrt{2}}{\beta_{z_2}(T_2)} \left(2r\norm{T_2-T_1}{E,V'}+2 \alpha(T)+2\e_1+\e_2\right).$
\end{proof}
We now give the proof of Theorem \ref{theo1}:
\begin{proof} First remark that the infimum  in \eqref{eq:zh} is reached here because $T_h$ is a finite dimensional operator. Consider $T|_{M_h}:M_h\to V_h'$ and call $ g_h:=Tp_h(z)$. This quantity is small in $V_h'$ as
 \begin{equation}\nonumber
\begin{aligned}
\norm{\g g_h}{V_h'} &=\norm{Tp_h(z)}{V_h'}=\norm{T(p_h(z)-z)}{V_h'}\leq \norm{T}{M,V'}\norm{p_h(z)-z}{M}\\
 &\leq \sqrt{2}\rho(T)\e^{\text{int}}_h(z).
\end{aligned}
\end{equation}
From this, we deduce that $\alpha(T|_{M_h})\leq \sqrt{2}\rho(T)\e^{\text{int}}_h(z)$ and that $p_h(z)$ is solution of
\begin{equation}\nonumber
\norm{T|_{M_h}p_h(z)}{V_h'}\leq \alpha(T|_{M_h})  + \e\quad \text{with}\quad \norm{p_h(z)}{M}=1,
\end{equation}
with $\e=\sqrt{2}\rho(T)\e^{\text{int}}_h(z)$. Due to Hypothesis \eqref{eq:Econt} and $\e^{\text{int}}_h(z)\leq 1/2$ we have 
\begin{equation}\nonumber
\norm{p_h(z)}{E}=\frac{\norm{\pi_hz}{E}}{\norm{\pi_hz}{M}}\leq 2\frac{\norm{z}{E}}{\norm{z}{M}}\leq 2r.
\end{equation}
 Applying now Proposition \eqref{prop:theo1} on operators $T_1=T|_{M_h}$ and $T_2=T_h$ both in $\cL(M_h,V_h')$ with $z_1 =p_h(z)$, $z_2=z_h$, $\e_1=\e$ and $\e_2=0$. We get
\begin{equation}\nonumber
\begin{aligned}
\norm{z_h-p_h(z)}{M}&\leq \frac{\sqrt{2}}{\beta(T_h)} \left(4r\, \e^{\text{op}}_h+2 \alpha(T|_{M_h})+2\e\right)\\
&\leq \frac{\sqrt{2}}{\beta(T_h)} \left(4r\, \e^{\text{op}}_h+4 \sqrt{2}\rho(T)\e^{\text{int}}_h(z)\right)\\
&\leq \frac{4}{\beta(T_h)} \left(\sqrt{2}\, r\, \e^{\text{op}}_h+2\rho(T)\e^{\text{int}}_h(z)\right).\\
\end{aligned}
\end{equation}
For the convergence, the additional hypothesis give the convergence of the right-hand side. We use that $p_h(z)\to z$ to conclude. 
\end{proof}


\subsection{Error estimates in the case $\g f\neq \g 0$}

We give and prove a first stability result based on the constant $\alpha(T_h)$. 
\begin{theorem}[Error estimate using $\alpha(T_h)$]\label{theo:alpha} Consider $\mu\in E$ a solution of  $T\mu=\g f$  with $\g f\neq 0$ and which satisfies $\e^{\text{int}}_h(\mu)\leq 1/2$. Fix $r>0$ such that $\norm{\mu}{E}\leq r\norm{\mu}{M}$. Consider now $\mu_h\in M_h$ a solution of $ \mu_h=\argmin_{m\in M_h} \norm{T_hm-\g f_h}{V_h'}.$
If $\alpha(T_h)>0$, we have  
\begin{equation}\nonumber\begin{aligned}
\frac{\norm{\mu_{h}-\pi_h\mu}{M}}{\norm{\pi_h\mu}{M}} &\leq \frac{4}{\alpha\left(T_h\right)}\left[ r\, \e^{\text{op}}_h + \rho(T)\left(\e^{\text{rhs}}_h+\e^{\text{int}}_h(\mu)\right)\right].\\
\end{aligned}\end{equation}
Moreover, if there exists $\alpha^*>0$ such that $\alpha(T_h)\geq \alpha^*$ for all $h>0$ and if $\e^{\text{op}}_h\to 0$ and $\e^{\text{rhs}}_h\to 0$ when $h\to 0$, we get $\norm{\mu_h- \mu}{M}\to 0$ when $h\to 0$.
 \end{theorem}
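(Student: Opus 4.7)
The plan is to exploit the defining lower bound
$$\alpha(T_h)\,\norm{\mu_h-\pi_h\mu}{M}\leq \norm{T_h(\mu_h-\pi_h\mu)}{V_h'},$$
which holds because $\mu_h-\pi_h\mu\in M_h$, and then to control the right-hand side by a cascade of triangle inequalities that extract exactly the three quantities $\e_h^{\text{op}}$, $\e_h^{\text{rhs}}$ and $\e_h^{\text{int}}(\mu)$. Using the equation $\g f=T\mu$, I would insert $\g f$, $T\mu$ and $T\pi_h\mu$ as intermediate terms to obtain the algebraic identity
$$T_h(\mu_h-\pi_h\mu)=(T_h\mu_h-\g f_h)+(\g f_h-\g f)+T(\mu-\pi_h\mu)+(T-T_h)\pi_h\mu,$$
in which the middle term $T\mu-\g f$ has been dropped because it vanishes.

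The second step is to bound each of the four summands in the $V_h'$-norm. For the first, the minimality of $\mu_h$ gives $\norm{T_h\mu_h-\g f_h}{V_h'}\leq \norm{T_h\pi_h\mu-\g f_h}{V_h'}$, and the same decomposition applied to $\pi_h\mu$ in place of $\mu_h$ shows this is at most $r\,\e_h^{\text{op}}\norm{\mu}{M}+\rho(T)\bigl(\e_h^{\text{int}}(\mu)+\e_h^{\text{rhs}}\bigr)\norm{\mu}{M}$. Here the operator-error term $(T-T_h)\pi_h\mu$ contributes $\e_h^{\text{op}}\norm{\pi_h\mu}{E}\leq r\,\e_h^{\text{op}}\norm{\mu}{M}$ thanks to the $E$-contraction \eqref{eq:Econt} and the a priori bound $\norm{\mu}{E}\leq r\norm{\mu}{M}$; the term $T(\pi_h\mu-\mu)$ contributes at most $\rho(T)\e_h^{\text{int}}(\mu)\norm{\mu}{M}$ because $\norm{T}{M,V'}=\rho(T)$ and $V_h\subset V$; the term $T\mu-\g f$ vanishes; and $\g f-\g f_h$ contributes $\e_h^{\text{rhs}}\norm{\g f}{V'}\leq \rho(T)\e_h^{\text{rhs}}\norm{\mu}{M}$ since $\norm{\g f}{V'}=\norm{T\mu}{V'}\leq \rho(T)\norm{\mu}{M}$. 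The three remaining summands in the main decomposition are bounded in exactly the same way, and summation gives
$$\norm{T_h(\mu_h-\pi_h\mu)}{V_h'}\leq 2\bigl[r\,\e_h^{\text{op}}+\rho(T)\bigl(\e_h^{\text{int}}(\mu)+\e_h^{\text{rhs}}\bigr)\bigr]\norm{\mu}{M}.$$

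Finally, the hypothesis $\e_h^{\text{int}}(\mu)\leq 1/2$ yields $\norm{\pi_h\mu}{M}\geq \norm{\mu}{M}/2$, so substituting $\norm{\mu}{M}\leq 2\norm{\pi_h\mu}{M}$ and dividing by $\alpha(T_h)\norm{\pi_h\mu}{M}$ turns the factor $2$ above into the claimed factor $4/\alpha(T_h)$. For the convergence statement, the asymptotic density of $(M_h)$ in $M$ and the fact that $\pi_h$ is the orthogonal projection give $\pi_h\mu\to \mu$, hence $\e_h^{\text{int}}(\mu)\to 0$ and $\norm{\pi_h\mu}{M}\to\norm{\mu}{M}>0$ (we use $\g f\neq \g 0$, hence $\mu\neq 0$); the uniform lower bound $\alpha(T_h)\geq \alpha^*$ together with $\e_h^{\text{op}},\e_h^{\text{rhs}}\to 0$ then forces $\norm{\mu_h-\pi_h\mu}{M}\to 0$, and a final triangle inequality concludes. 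The only conceptual obstacle is the gap between the $M$- and $E$-norms in the operator-error estimate, explained in Subsection \ref{sub:interp}; this is why the a priori bound $\norm{\mu}{E}\leq r\norm{\mu}{M}$ and the $E$-contraction \eqref{eq:Econt} are both essential to the argument.
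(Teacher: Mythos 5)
Your proposal is correct and follows essentially the same route as the paper: both start from $\alpha(T_h)\norm{\mu_h-\pi_h\mu}{M}\leq\norm{T_h(\mu_h-\pi_h\mu)}{V_h'}$, use the minimality of $\mu_h$ to reduce to $2\norm{T_h\pi_h\mu-\g f_h}{V_h'}$, decompose via $T\mu=\g f$ into the operator, interpolation and right-hand-side errors, and finish with $\norm{\mu}{M}\leq 2\norm{\pi_h\mu}{M}$ to obtain the factor $4/\alpha(T_h)$. The only difference is cosmetic (you expand $T_h(\mu_h-\pi_h\mu)$ into four summands first and then invoke minimality on the first, whereas the paper applies the triangle inequality before decomposing), and your treatment of the convergence statement is a correct completion of what the paper leaves implicit.
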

 
 \begin{remark}  Note that if  $\alpha(T_h)>0$ for all $h>0$, then $\mu_h$ is uniquely defined and moreover $\e^{\text{op}}_h\to 0$ and if $\alpha(T_h)\geq \alpha_*>0$, Theorem \ref{theo:usc} assures that $\alpha(T)\geq \alpha^*>0$ which guarantee the uniqueness of $\mu$. 
\end{remark}

 \begin{remark} This result makes sense in practice even if $\alpha(T_h)$ goes to zero. Indeed, at a fixed $h>0$, $\alpha(T_h)$ can be computed from $T_h$ as the first singular value and all the error terms on the right-hand side can be estimated (at least an upper bound can be given). It then gives a quantitative error bound on the reconstruction that can be useful no matter with the asymptotic behavior of $\alpha(T_h)$.
 \end{remark}
 
 \begin{proof} First note that from the hypothesis $\e^{\text{int}}_h(\mu)\leq 1/2$ we have that $\norm{\mu}{M}\leq 2\norm{\pi_h\mu}{M}$ and $\norm{\pi_h\mu}{E}\leq \norm{\mu}{E}\leq r\norm{\mu}{M}\leq 2r\norm{\pi_h\mu}{M}$and  $\norm{\g f}{V'}\leq \rho(T)\norm{\mu}{M}$. From the definition of $\alpha(T_h)$ we write 
 \begin{equation}\nonumber\begin{aligned}
 \alpha(T_h)\norm{\mu_h-\pi_h\mu}{M} &\leq \norm{T_h\mu_h-T_h\pi_h\mu}{V_h'}\leq \norm{T_h\mu_h-\g f_h}{V_h'} + \norm{T_h\pi_h\mu-\g f_h}{V_h'}\\
 &\leq 2\norm{T_h\pi_h\mu-\g f_h}{V_h'}\\
  &\leq 2\norm{T\mu-\g f_h}{V_h'} + 2\norm{T\, \pi_h\mu - T\mu}{V_h'}+ 2\norm{(T_h-T)\, \pi_h\mu}{V_h'}\\
  &\leq 2\norm{\g f-\g f_h}{V_h'} + 2\rho(T)\norm{\pi_h\mu-\mu}{M}+ 2 \, \e^{\text{op}}_h\norm{\pi_h\mu }{E}\\
  &\leq 2\e^{\text{rhs}}_h\norm{\g f}{V'} + 2\rho(T)\e^{\text{int}}_h(\mu)\norm{\mu}{M}+ 4r \, \e^{\text{op}}_h\norm{\pi_h\mu }{M}\\
    &\leq 2\rho(T)\left(\e^{\text{rhs}}_h+\e^{\text{int}}_h(\mu)\right)\norm{\mu}{M} + 4r\, \e^{\text{op}}_h\norm{\pi_h\mu }{M}\\
    &\leq 4\left[\rho(T)\left(\e^{\text{rhs}}_h+\e^{\text{int}}_h(\mu)\right)+ r\e^{\text{op}}_h\right] \norm{\pi_h\mu }{M}.
  \end{aligned}\end{equation}
  \
 \end{proof}

We now state and prove the main stability estimate concerning the general problem $T\mu=\g f$ with a non zero right-hand side. This result uses $\beta(T_h)$ which is always better than $\alpha(T_h)$. The price of this change is that the stability estimates only holds in the hyperplane $\{z_h\}^\perp$, where $z_h$ is the vector that minimizes $\norm{T_hz_h}{V_h'}$ on the unit sphere.

\begin{theorem}[Error estimate using $\beta(T_h)$]\label{theo2} Consider $\mu\in E$ a solution of  $T\mu=\g f$  with $\g f\neq 0$ and which satisfies $\e^{\text{int}}_h(\mu)\leq 1/2$. Fix $r>0$ such 
that $\norm{\mu}{E}\leq r\norm{\mu}{M}$. Consider $z_h\in M_h$ a solution of 
\begin{equation}\nonumber
\norm{T_hz_h}{V_h'} = \alpha(T_h)\quad\text{with}\quad\norm{z_h}{M}=1.\end{equation}
Consider now $\mu_h\in M_h$ a solution of
\begin{equation}\label{eq:muh}
\mu_h=\argmin_{\substack{ m\in M_h\\ m\perp z_h}} \norm{T_hm-\g f_h}{V_h'}, \quad\text{with } \mu_h\perp z_h.
\end{equation}
If $\beta(T_h)>0$, there exits $t\in\R$ such that $\mu_{h,t}:=tz_h+\mu_h$ satisfies 
\begin{equation}\nonumber\begin{aligned}
\frac{\norm{\mu_{h,t}-\pi_h\mu}{M}}{\norm{\pi_h\mu}{M}} &\leq \frac{4}{\beta\left(T_h\right)}\left[ r\, \e^{\text{op}}_h + \rho(T)\left(\e^{\text{rhs}}_h+\e^{\text{int}}_h(\mu)\right) +\frac{\alpha\left(T_h\right)}{2}\right].\\
\end{aligned}\end{equation}
 \end{theorem}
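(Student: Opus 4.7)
The plan is to mimic the proof of Theorem \ref{theo:alpha} but to exploit the better constant $\beta(T_h)=\beta_{z_h}(T_h)$ by arranging that the error vector lies in the hyperplane $\{z_h\}^\perp$. The choice of $t$ will be forced by orthogonality: I will set
\[
t := \inner{\pi_h\mu,z_h}{M},\qquad q := \pi_h\mu - t\, z_h,
\]
so that $q\in \{z_h\}^\perp\subset M_h$ and, with $\mu_{h,t}=t\, z_h+\mu_h$,
\[
\mu_{h,t}-\pi_h\mu = \mu_h - q \in \{z_h\}^\perp .
\]
This is the crucial observation: the discrepancy between the "lifted" approximation $\mu_{h,t}$ and the target $\pi_h\mu$ lives in the hyperplane where we control things by $\beta(T_h)$ instead of $\alpha(T_h)$.

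Once this decomposition is in place, the argument is essentially a careful triangle inequality. First, applying the definition of $\beta_{z_h}(T_h)$ to the admissible direction $\mu_h-q$:
\[
\beta(T_h)\,\norm{\mu_h-q}{M} \leq \norm{T_h(\mu_h-q)}{V_h'}\leq \norm{T_h\mu_h-\g f_h}{V_h'}+\norm{T_h q-\g f_h}{V_h'}.
\]
Since $q\perp z_h$, it is a candidate in the minimization \eqref{eq:muh}, so the optimality of $\mu_h$ gives $\norm{T_h\mu_h-\g f_h}{V_h'}\leq \norm{T_h q-\g f_h}{V_h'}$, hence
\[
\beta(T_h)\,\norm{\mu_h-q}{M} \leq 2\norm{T_h q-\g f_h}{V_h'} \leq 2\norm{T_h\pi_h\mu -\g f_h}{V_h'} + 2|t|\,\alpha(T_h).
\]

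The last step is to bound the two remaining terms. For $|t|$ I just use $|t|=|\inner{\pi_h\mu,z_h}{M}|\leq \norm{\pi_h\mu}{M}$. For $\norm{T_h\pi_h\mu-\g f_h}{V_h'}$ I repeat verbatim the four-term split of the proof of Theorem \ref{theo:alpha}: insert $T\mu=\g f$, use $\e^{\text{op}}_h$ to handle $T_h-T$ applied to $\pi_h\mu$, use $\rho(T)$ to handle $T(\pi_h\mu-\mu)$, use $\e^{\text{rhs}}_h$ to handle $\g f-\g f_h$, and use $\e^{\text{int}}_h(\mu)\leq 1/2$ to replace $\norm{\mu}{M}$ by $2\norm{\pi_h\mu}{M}$ and $\norm{\pi_h\mu}{E}$ by $2r\norm{\pi_h\mu}{M}$ (via hypothesis \eqref{eq:Econt} and $\norm{\mu}{E}\leq r\norm{\mu}{M}$). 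This yields exactly
\[
\norm{T_h\pi_h\mu-\g f_h}{V_h'}\leq 2\bigl[r\,\e^{\text{op}}_h+\rho(T)(\e^{\text{rhs}}_h+\e^{\text{int}}_h(\mu))\bigr]\norm{\pi_h\mu}{M}.
\]
Combining gives the claimed bound after dividing by $\beta(T_h)\norm{\pi_h\mu}{M}$ and noting $\mu_h-q=\mu_{h,t}-\pi_h\mu$.

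The only genuinely new ingredient compared with Theorem \ref{theo:alpha} is the orthogonal splitting; no serious obstacle is expected, but two points deserve care. First, one must verify that $q=\pi_h\mu-tz_h$ is an \emph{admissible} competitor in \eqref{eq:muh}, i.e. that $q\in M_h$ and $q\perp z_h$; both hold by construction since $\pi_h\mu,z_h\in M_h$. Second, the extra $\alpha(T_h)/2$ term in the final estimate is precisely the price paid for the lift along $z_h$: it comes from $|t|\alpha(T_h)\leq \alpha(T_h)\norm{\pi_h\mu}{M}$, which is then absorbed into the bracket as $4\cdot\alpha(T_h)/2$ after the factor $2/\beta(T_h)$ is folded with the leading constant.
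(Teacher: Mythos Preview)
Your proof is correct and follows essentially the same approach as the paper: the same choice $t=\inner{\pi_h\mu,z_h}{M}$, the same competitor $q=\pi_h\mu-t\,z_h$ (the paper calls it $p$) in the minimization, and the same four-term split to bound $\norm{T_h\pi_h\mu-\g f_h}{V_h'}$. The only organizational difference is that the paper routes the first step through an auxiliary Proposition~\ref{prop:5} (applied with $T_1=T|_{M_h}$, $T_2=T_h$, $\g f_1=T\pi_h\mu$, $\g f_2=T_h\mu_h$), whereas you apply the definition of $\beta_{z_h}(T_h)$ directly to $\mu_h-q$; your route is slightly more streamlined but the ingredients and constants are identical.
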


\begin{remark} This result has to be used as soon as Theorem \ref{theo:alpha} is irrelevant because $\alpha(T_h)$ is too small. 
It somehow kills the degenerated direction $z_h$ and gives a possibly better estimate for the computed solution up to an unknown 
component in the direction $z_h$.
\end{remark} 

\begin{remark} This result gives also the algorithmic procedure to approach the exact solution $\mu$:
\begin{enumerate}
\item Identify $z_h$ with stability thanks to Theorem \ref{theo1}.
\item Solve the problem \eqref{eq:muh} to identify $\mu_h$.
\item Find the best approximation $tz_h+\mu_h$ by choosing a correct coefficient $t\in\R$ using any additional scalar information on the exact solution such as its mean, its background value, a punctual value, etc\dots
\end{enumerate}
\end{remark} 

\begin{remark} This result provides a quantitative error estimate as $\alpha(T_h)$ and $\beta(T_h)$ can be computed from $T_h$ as the
two first singular values and all the error terms on the right-hand side can be estimated (at least an upper bound can be given).
\end{remark}

 Before giving the proof of this Theorem, let us state and prove an intermediate result.
 
\begin{proposition}\label{prop:5}Consider $T_1\in\cL(M,V')$ $\g f_1\in V'$, $\g f_1\neq 0$ and let $z_1\in E$ be a solution of $T_1\, \mu_1 = \g f_1$. Fix $r>0$ such that $\norm{\mu_1}{E}\leq r\norm{\mu_1}{M}$ and for any $T_2\in \cL(M,V')$, consider a solution $z_2\in E$ of 
\begin{equation}\nonumber
\norm{T_2\, z_2}{V'}\leq \alpha(T_2)  + \e_2\quad\text{and}\quad\norm{z_2}{M}=1
\end{equation}
and consider a solution $\mu_2\in E$ of 
\begin{equation}\nonumber
T_2\, \mu_2 =\g f_2\quad\text{ and }\quad \mu_2\perp z_2.
\end{equation}
If $\beta_{z_2}(T_2)>0$, there exits $t\in\R$ such that $\mu_{2,t}:=tz_2+\mu_2$ satisfies 
\begin{equation}\nonumber\begin{aligned}
\frac{\norm{\mu_{2,t}-\mu_1}{M}}{\norm{\mu_1}{M}} &\leq \frac{1}{\beta_{z_2}(T_2)}\left( \frac{\norm{\g f_2-\g f_1}{V'}}{\norm{\mu_1}{M}}  +   r\norm{T_2-T_1}{E,V'}+\alpha(T_2)+\e_2\right).
\end{aligned}\end{equation}
Moreover, if $\e_2=0$ it reads 
\begin{equation}\nonumber
\frac{\norm{\mu_{2,t}-\mu_1}{M}}{\norm{\mu_1}{M}} \leq \frac{1}{\beta(T_2)}\left(  \frac{\norm{\g f_2-\g f_1}{V'}}{\norm{\mu_1}{M}}  +   r\norm{T_2-T_1}{E,V'}+\alpha(T_2)\right).
\end{equation}
\end{proposition}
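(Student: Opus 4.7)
The plan is to find the right $t$ explicitly by decomposing $\mu_1$ along the direction $z_2$ and its orthogonal complement, and then use the \emph{inf-sup} constant $\beta_{z_2}(T_2)$ to control the orthogonal component. Concretely, write the orthogonal decomposition $\mu_1 = s\, z_2 + m$ with $s := \inner{\mu_1,z_2}{M}\in\R$ and $m\perp z_2$. Since $\mu_2\perp z_2$ as well, choosing $t:=s$ gives
\begin{equation}\nonumber
\mu_{2,t}-\mu_1 = (t-s)z_2 + (\mu_2-m) = \mu_2 - m \;\in\; \{z_2\}^\perp.
\end{equation}
This is the key observation: by an appropriate choice of $t$, the error lives in the hyperplane where the \emph{inf-sup} constant $\beta_{z_2}(T_2)$ provides a lower bound on $\norm{T_2\cdot}{V'}/\norm{\cdot}{M}$.

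Next, I would express $T_2(\mu_2-m)$ in terms of quantities that are controlled by the hypotheses. Using $T_2\mu_2 = \g f_2$ and $m = \mu_1 - sz_2$, we get
\begin{equation}\nonumber
T_2(\mu_2-m) = \g f_2 - T_2\mu_1 + s\, T_2 z_2 = (\g f_2-\g f_1) - (T_2-T_1)\mu_1 + s\, T_2 z_2,
\end{equation}
where the last equality uses $T_1\mu_1=\g f_1$. Taking $V'$-norms, applying the triangle inequality together with $\norm{(T_2-T_1)\mu_1}{V'}\leq \norm{T_2-T_1}{E,V'}\norm{\mu_1}{E}\leq r\norm{T_2-T_1}{E,V'}\norm{\mu_1}{M}$, the hypothesis $\norm{T_2 z_2}{V'}\leq \alpha(T_2)+\e_2$, and Cauchy--Schwarz $|s|\leq \norm{\mu_1}{M}$, yields
\begin{equation}\nonumber
\norm{T_2(\mu_2-m)}{V'} \leq \norm{\g f_2-\g f_1}{V'} + r\norm{T_2-T_1}{E,V'}\norm{\mu_1}{M} + \norm{\mu_1}{M}\bigl(\alpha(T_2)+\e_2\bigr).
\end{equation}

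The conclusion then follows immediately from the definition of $\beta_{z_2}(T_2)$ applied to $\mu_2-m\in\{z_2\}^\perp$, namely $\beta_{z_2}(T_2)\norm{\mu_2-m}{M}\leq \norm{T_2(\mu_2-m)}{V'}$, and dividing by $\norm{\mu_1}{M}$. For the second statement, when $\e_2=0$ the vector $z_2$ realizes the infimum $\alpha(T_2)$ on the unit sphere, so Corollary \ref{cor:def2} gives $\beta_{z_2}(T_2)=\beta(T_2)$, and the inequality reduces as claimed. I don't foresee a serious obstacle here: the argument is essentially an algebraic decomposition, and the only subtlety is remembering that perturbations of $T$ must be measured in the weaker $\cL(E,V')$-norm (which is why the bound $\norm{\mu_1}{E}\leq r\norm{\mu_1}{M}$ is invoked). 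The constant in front of $\alpha(T_2)+\e_2$ comes out to $1$ (not $2$) precisely because $|s|\leq \norm{\mu_1}{M}$, which is sharper than a naive bound.
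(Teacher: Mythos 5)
Your proof is correct and follows essentially the same route as the paper's: the paper also sets $t:=\inner{\mu_1,z_2}{M}$ so that $\mu_{2,t}-\mu_1\perp z_2$, applies $\beta_{z_2}(T_2)$ to that difference, and splits $T_2\mu_{2,t}-T_2\mu_1$ into the same three terms bounded by $\norm{\g f_2-\g f_1}{V'}$, $|t|\,\norm{T_2 z_2}{V'}$, and $\norm{(T_2-T_1)\mu_1}{V'}$. The only cosmetic difference is that you make the orthogonal decomposition of $\mu_1$ explicit before computing, whereas the paper works directly with $\mu_{2,t}-\mu_1$; the estimates are identical.
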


\begin{proof} Denote $\mu_{2,t}:=tz_2+\mu_2$ with $t:=\left<\mu,z_2\right>_M$. With this choice, we have that $(\mu_{2,t}-\mu_1)\perp z_2$. From the definition of $\beta_{z_2}(T_2)$, we write 
\begin{equation}\nonumber\begin{aligned}
\beta_{z_2}(T_2)\norm{\mu_{2,t}-\mu_1}{M}  &\leq \norm{T_2\, \mu_{2,t}-T_2\, \mu_1}{V'}\\ &\leq  \norm{T_2\mu_2-T_1\mu_1}{V'}+  |t|\norm{T_2\, z_2}{V'}+ \norm{(T_2-T_1)\mu_1}{V'}\\
&\leq \norm{\g f_2-\g f_1}{V'} +\norm{\mu_1}{M}\left(\alpha(T_2)+\e_2\right) + \norm{T_2-T_1}{E,V'}\norm{\mu_1}{E}.\\
&\leq \norm{\g f_2-\g f_1}{V'} +\norm{\mu_1}{M}\left(\alpha(T_2)+\e_2 + r\norm{T_2-T_1}{E,V'}\right).\\
\end{aligned}\end{equation}
\
\end{proof}

We can now give the proof of Theorem \ref{theo2}.

\begin{proof} (of Theorem \ref{theo2}) Consider $T|_{M_h}:E_h\to V_h'$ and call $\g g_h:=T\pi_h\mu$. Remark that $\norm{\pi_h\mu}{E}\leq \norm{\mu}{E}\leq r\norm{\mu}{M}\leq 2r\norm{\pi_h\mu}{M}$. Applying Proposition \ref{prop:5} to the operators $T_1:=T|_{E_h}$, $T_2:=T_h$ both in $\cL(M_h,V_h')$, with $\g f_1:=\g g_h$, $\g f_2:=T_h\mu_h$ both in $V_h'$ and with $\mu_1:=\pi_h\mu$, $\mu_2:=\mu_h$. We get the existence of $t\in\R$ such that 
 \begin{equation}\nonumber
\frac{\norm{\mu_{h,t}-\pi_h\mu}{M}}{\norm{\pi_h\mu }{M}} \leq \frac{1}{\beta(T_h)}\left( \frac{\norm{T_h\mu_h-\g g_h}{V_h'}}{\norm{\pi_h\mu}{M}}  +   2r\, \e^{\text{op}}_h+\alpha(T_h)\right).
\end{equation}
Now we bound $\norm{T_h\mu_h-\g g_h}{V_h'}$ as follows: 
$$\norm{T_h\mu_h-\g g_h}{V_h'}\leq \norm{T_h\mu_h-\g f_h}{V_h'} + \norm{\g g_h-\g f_h}{V_h'}.$$ To deal with the first term, we define $p:=\pi_h\mu-\left<\pi_h\mu,z_h\right>_Mz_h$ orthogonal to $z_h$. We have
\begin{equation}\nonumber
\begin{aligned}
\norm{T_h\mu_h-\g f_h}{V_h'} &\leq \norm{T_hp-\g f_h}{V_h'}\leq \norm{T_h\pi_h\mu-\g f_h}{V_h'} + \norm{T_h z_h}{V_h'}\norm{\pi_h\mu}{M}\\
&\leq \norm{T\pi_h\mu-\g f_h}{V_h'} + \norm{(T_h-T)\pi_h\mu}{V_h'}+\alpha(T_h)\norm{\pi_h\mu}{M}\\
&\leq \norm{\g g_h-\g f_h}{V_h'}+ \e^{\text{op}}_h\norm{\pi_h\mu}{E}+\alpha(T_h)\norm{\pi_h\mu}{M}\\
&\leq \norm{\g g_h-\g f_h}{V_h'}+ \left(2r\, \e^{\text{op}}_h+\alpha(T_h)\right)\norm{\pi_h\mu}{M}.
\end{aligned}
\end{equation}
Now the second term is bounded as follows:
\begin{equation}\nonumber\begin{aligned}
\norm{\g g_h-\g f_h}{V_h'} &\leq  \norm{\g g_h - \g f}{V_h'} +  \norm{\g f - \g f_h}{V_h'}\leq \norm{T\pi_h\mu- T\mu}{V_h'} +  \e_h^{\text{rhs}}\norm{\g f}{V'}\\
&\leq \rho(T)\e_h^{\text{int}}(\mu)\norm{\mu}{M} +  \rho(T)\e_h^{\text{rhs}}\norm{\mu}{M}\leq \rho(T)\norm{\mu}{M}\left(\e_h^{\text{int}}(\mu) +  \e_h^{\text{rhs}}\right)\\
&\leq 2\rho(T)\norm{\pi_h\mu}{M}\left(\e_h^{\text{int}}(\mu) +  \e_h^{\text{rhs}}\right).\\
\end{aligned}\end{equation}
This last line is true because the hypothesis $\e^{\text{int}}_h(\mu)\leq 1/2$ implies that $\norm{\mu}{M}\leq 2\norm{\pi_h\mu}{M}$. Putting things together, it come that 
\begin{equation}\nonumber
 \frac{\norm{T_h\mu_h-\g g_h}{V_h'}}{\norm{\pi_h\mu}{M}} \leq 4\rho(T)\left(\e_h^{\text{int}}(\mu) +  \e_h^{\text{rhs}}\right) + 2r\, \e^{\text{op}}_h+\alpha(T_h)
\end{equation}
and then
\begin{equation}\nonumber
\frac{\norm{\mu_{h,t}-\pi_h\mu}{M}}{\norm{\pi_h\mu }{M}} \leq \frac{2}{\beta(T_h)}\left[  2\rho(T)\left(\e_h^{\text{int}}(\mu) +  \e_h^{\text{rhs}}\right) + 2r\, \e^{\text{op}}_h+\alpha(T_h)\right].
\end{equation}
\
\end{proof}

\section{Numerical results}\label{sec5}

In this section we provide numerical applications of Theorems \ref{theo1} and \ref{theo2} and we present the general methodology to
numerically approach the solution of the equation \eqref{eq:1} in various contexts. In the whole section, we stay in thr framework where $M:=L^2(\Omega)$, $E:=L^\infty(\Omega)$ and $V:=H^1_0(\Omega,\R^d)$.

 In subsection \ref{sub:honeycomb}, we exhibit a simple and efficient pair of approximation spaces $(M_h,V_h)$ called the honeycomb
 discretization pair, that numerically satisfies the discrete \emph{inf-sup} condition.

\subsection{Matrix formulation of the discretized problem}\label{sub:matrix}

In this section, we describe the matrix formulation of the discrete problem \eqref{eq:Th} which gives a way to use the stability theorems in practice. Let us fix a discretization size $h>0$ and pick a pair of finite dimensional subspaces $M_h\subset M$ and 
$V_h\subset V$. Let $(\e_1,\dots,\e_n)$ be a basis of $M_h$ and let $(\g e_1,\dots,\g e_p)$ be a basis of $V_h$. We define $\cT\in\R^{p\times n}$ and $\g b\in\R^{p}$ the 
matrix versions of the discrete operator $T_h$ and the right-hand side $\g f_h $as the matrices 
\begin{equation}\nonumber
\cT_{ij}:=\left<T_h\e_j\g e_i\right>_{V_h',V_h},\quad\text{and}\quad\g b_{i}:=\left<\g f_h,\g e_i\right>_{V_h',V_h}.
\end{equation}

As no ambiguity can occur, we adopt the notation for $\mu:=\sum_{j}\mu_j\e_j\in M_h$ and $\g \mu:=(\mu_1,\dots\mu_n)^T$ and the same notation 
for $\g v:=\sum_{i}v_i\g e_i\in V_h$ and ${\g v}=(v_1,\dots,v_p)^T\in\R^p$. We have the correspondence
\begin{equation}\nonumber
\left<T_h\mu,v\right>_{V_h',V_h}={\g v}^T\cT\g \mu.
\end{equation}
                                                                                                                                                                                                                                                                                                                                                                                                                                                                                                                                                                                                                                                                                                                                                                                                                   We now call $(\cS_{M})_{ij}:=\left<\e_i,\e_j\right>_M$ and $(\cS_{V})_{ij}:=\left<\g e_i,\g e_j\right>_V$. They enable to compute the norm in $M$ and $V$ through the formulas                                                                                                                                                                                                                                                                                                                                                                                                                                                                                                                                                                                                                                                                                                                                                                            $\norm{\mu}{M}^2=\sum_{i,j}\mu_i\mu_j\left<\e_i,\e_j\right>_M = \g \mu^T\cS_M\g \mu,$  and $ 
\norm{\g v}{V}^2=\sum_{i,j} v_i v_j\left<\g e_i, \g e_j\right>_V = {\g v}^T\cS_V{\g v}.$
If we denote $\cB_M$ and $\cB_V$ the square root matrices of $\cS_M$ and $\cS_V $ (i.e. such that $\cB_M^2=\cS_M$), we have that $\norm{\mu}{M}=\norm{\cB_M\g \mu}{2}$ and  $\norm{\g v}{V}=\norm{\cB_V{\g v}}{2}.
$ Hence the constant $\alpha(T_h)$ is given by 
\begin{equation}\begin{aligned}\nonumber
\alpha(T_h) &=\inf_{\g \mu\in\R^n} \sup_{\g v\in\R^p}\frac{\g v^T\cT\g \mu}{\norm{\cB_M\g \mu}{2}\norm{\cB_V\g v}{2}}\\
&=\inf_{\g \mu\in\R^n} \sup_{\g v\in\R^p}\frac{\g v^T\cB_V^{-1} \cT \cB_M^{-1}\g \mu}{\norm{\g \mu}{2}\norm{\g v}{2}} 
= \inf_{\g \mu\in\R^n} \frac{\norm{\cB_V^{-1} \cT \cB_M^{-1}\g \mu}{2}}{\norm{\g \mu}{2}}.
\end{aligned}\end{equation}
which is the smallest singular value of the matrix 
\begin{equation}\nonumber
\cM:=\cB_V^{-1} \cT \cB_M^{-1}
\end{equation}
or also the square root of the smallest eigenvalue of $\cM^T\cM=\cB_M^{-1}\cT^T\cS_V^{-1} \cT \cB_M^{-1}$.

Call now and $\g z\in \R^n$  the first singular vector of $\cM$ (hence associated with $\alpha(T_h)$) or the first eigenvector 
of $\cM^T\cM$. It is equal to the solution $z_h:=\sum_jz_{j}\e_j\in M_h$ of \eqref{eq:zh} up to a change of sign. 

\begin{remark} The basis matrices $\cB_M$ and $\cB_V$ are mandatory to get the exact solution $\alpha(T_h)$ and $z_h$ 
as defined in \eqref{eq:zh}. As $\alpha(T_h)$ is expected to be small, it is possible to consider directly the first singular
vector of the matrix $\cT$ itself. The numerical computation gets a bit simpler but creates an additional error which is not 
controlled by the theory described herein. 
\end{remark}

We can now compute the discrete \emph{inf-sup} constant of $T_h$:
\begin{equation}\label{eq:betah}\begin{aligned}
\beta(T_h) &=\inf_{\substack{ \g \mu\in\R^n\\ \g \mu \perp \cS_M\g z}} \sup_{\g v\in\R^p}\frac{\g v^T\cT\g \mu}{\norm{\cB_M\g \mu}{2}\norm{\cB_V\g v}{2}}\\
 &=\inf_{\substack{\g \mu \in\R^n\\ \g \mu \perp \g z}} \sup_{\g v\in\R^p}\frac{\g v^T(\cB_V^{-1})^T \cT \cB_M^{-1}\g \mu}{\norm{\g \mu}{2}\norm{\g v}{2}} = \inf_{\substack{\g \mu \in\R^n\\ \g \mu \perp \g z}} \frac{\norm{\cM\g \mu}{2}}{\norm{\g \mu}{2}}
\end{aligned}\end{equation}
which is the second smallest singular value of the matrix $\cM$ or also the square root of the second smallest eigenvalue
of $\cM^T\cM$. Finally, in order to give the solution of \eqref{eq:muh} in Theorem \ref{theo2}, we rewrite the problem under 
a matrix formulation:
\begin{equation}\nonumber
\begin{aligned}
\min_{\substack{ m\in M_h\\ m\perp z_h}} \norm{T_hm-\g f_h}{V_h'} &=\min_{\substack{ m\in M_h\\ m\perp z_h}}\sup_{\g v\in V_h} \frac{\inner{T_hm-\g f_h,\g v}{V_h',V_h}}{\norm{\g v}{V}}= \min_{\substack{ \g \mu\in \R^n\\ \g \mu\perp \g z}}\sup_{\g v\in \R^p} \frac{ \g v^T(\cT\g \mu - \g b)
}{\norm{\cB_V\g v}{2}}\\
&= \min_{\substack{ \g \mu\in \R^n\\ \g \mu\perp \g z}}\sup_{\g v\in \R^p} \frac{ \g v^T\cB_V^{-1}(\cT\g \mu - \g b)}{\norm{\g v}{2}}= \min_{\substack{ \g \mu\in \R^n\\ \g \mu\perp \g z}} \norm{\cB_V^{-1}(\cT\g \mu - \g b)}2.\\
\end{aligned}
\end{equation}

Call now $\wt\cT := \begin{bmatrix} \cT \\ \g z^T\end{bmatrix}$, $\wt{\g b}:=\left[\begin{matrix} \g b \\ 0 \end{matrix}\right]$ and $\wt\cB_V:=\left[\begin{matrix} \cB_{V} & 0 \\ 0 & 1 \end{matrix}\right]$ we aim at solving 
\begin{equation}\nonumber
\wt\cB_V^{-1}{\wt\cT}\g \mu = \wt\cB_V^{-1}\wt{\g b}
\end{equation}
in sense of least squares which is equivalent to define $\g \mu := ({\wt\cT}^T\wt\cS_V^{-1}{\wt\cT})^{-1}{\wt\cT}^T\wt\cS_V^{-1}\wt{\g b}.$

\subsection{The honeycomb pair of finite element spaces}\label{sub:honeycomb}

After numerous tests with various finite element pair of spaces, it appears that a specific pair of spaces gather a large amount of advantages for the specific use in the inverse parameter problem that we aim at solving. This pair $(M_h,V_h)$ is the so called honeycomb discretization pair. Like in Figure \ref{fig:honeycomb}, define a regular hexagonal subdivision of $\Omega$ denoted $\{\Omega_{h,j}^{\text{hex}}\}_{j=1,\dots,N^\text{hex}_h}$ where $h>0$ is the diameter of the hexagons and $N^\text{hex}_h$ is the number of hexagons used. We then call $\Omega_h\subset \Omega$ 
the subdomain defined by this subdivision. That means
\begin{equation}\nonumber
\overline{\Omega_h}=\bigcup_{j=1}^{N^\text{hex}_h}\overline{\Omega_{h,j}^{\text{hex}}}.
\end{equation}
Now we consider the uniform triangular sub-mesh defined by subdividing each hexagon in six equilateral triangles of size $h$. This subdivision is denoted $\{\Omega_{h,k}^{\text{tri}}\}_{k=1,\dots,N^\text{tri}_h}$ where $N^\text{tri}_h:=6N^\text{hex}_h$. It is represented in dashed bue in figure \ref{fig:honeycomb}. 

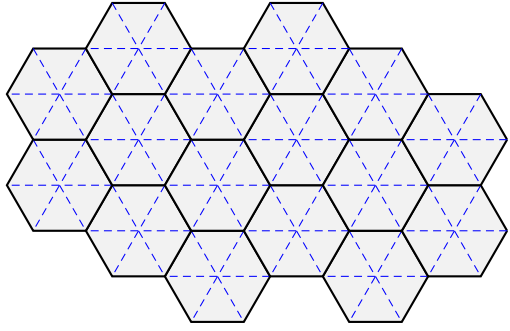
\begin{figure}
\begin{center}
\begin{tikzpicture}[scale = 0.7]
\newcommand{\hexagonA}{
\draw[fill=gray!10,thick]  (3*\k,{sqrt(3)*\l}) -- ++(120:1) -- ++(180:1) -- ++(240:1) --++(300:1) -- ++(0:1) --++ (60:1);
\foreach  \m in {0,...,2}{
\draw[blue,densely dashed]  (3*\k-1,{sqrt(3)*\l})++(60*\m:1)  -- ++(180+60*\m:2);
}
}
\newcommand{\hexagonB}{
\draw[fill=gray!10,thick]  ({3*\k+3/2},{sqrt(3)/2+sqrt(3)*\l}) -- ++(120:1) -- ++(180:1) -- ++(240:1) --++(300:1) -- ++(0:1) --++ (60:1);
\foreach  \m in {0,...,2}{
\draw[blue,densely dashed]   ({3*\k+3/2-1},{sqrt(3)/2+sqrt(3)*\l}) ++(60*\m:1)  -- ++(180+60*\m:2);
}
}
\foreach \k in {1,...,2}{\foreach \l in {0,...,0}{\hexagonA}}
\foreach \k in {0,...,2}{\foreach \l in {0,...,0}{\hexagonB}}
\foreach \k in {0,...,2}{\foreach \l in {1,...,1}{\hexagonA\hexagonB}}
\foreach \k in {0,...,2}{\foreach \l in {2,...,2}{\hexagonA}}
\foreach \k in {0,...,1}{\foreach \l in {2,...,2}{\hexagonB}}
\end{tikzpicture}

\caption{\label{fig:honeycomb} Honeycomb space discretization. In plain black, the hexagonal subdivision and in dashed blue,
the triangular subdivision.}
\end{center}
\end{figure}

We now define the finite dimensional discretization space $M_h$ of $M$ as the collection of functions $\mu\in L^2(\Omega_h)$ that are constant in each hexagon. In other terms,
\begin{equation}\nonumber
M_h:=\PP^0\left(\Omega_{h}^{\text{hex}}\right)=\left\{\mu\in L^2(\Omega_h)\ |\ \forall j\ \mu|_{\Omega_{h,j}^{\text{hex}}}\ \text{ is constant}\right\}.
\end{equation}
Functions in $M_h$ can be extended by $0$ out of $\Omega_h$ to get $M_h\subset M$. For the discretization space of $V$, we chose the classic finite element class $\PP^1_0$ over the triangulation. It is made of all the functions of $H^1_0(\Omega_h)$ that are linear over all the triangles. In other terms,
\begin{equation}\nonumber
V_h:=\PP_0^1\left(\Omega_{h}^{\text{tri}},\R^2\right)=\left\{\g v\in H^1_0(\Omega_h,\R^d)\ |\ \forall k\ \g v|_{\Omega_{h,k}^{\text{tri}}}\ \text{ is linear}\right\}.
\end{equation}
Functions in $V_h$ can be extended by $\g 0$ out of $\Omega_h$ to get $V_h\subset V$.

\begin{remark} This particular choice of finite element spaces gathers several advantages to compare to other more classic pairs:

\begin{enumerate}
\item The space $\PP^0\left(\Omega_{h}^{\text{hex}}\right)$ is suitable for discontinuous functions interpolation. This is important as we aim at recovering discontinuous mechanical parameters of biological tissues for instance. 

\item The hexagonal discretization of $\Omega$ is optimal in the sense that it minimizes the ratio of the number of unknown
$N_h^{\text{hex}}$ over the resolution $h$. 

\item From a given hexagonal mesh and triangular sub-mesh, spaces $\PP^0\left(\Omega_{h}^{\text{hex}}\right)$ and 
$\PP_0^1\left(\Omega_{h}^{\text{tri}},\R^2\right)$ are easy to build from the most classic pair of finite element
spaces $\left(\PP^0\left(\Omega_{h}^{\text{tri}}\right), \PP^1\left(\Omega_{h}^{\text{tri}}\right)\right)$.

\item The system of equations $T_h\mu_h=\g f_h$ is (most of the time) over-determinate as it involves around $2 N_h^{\text{hex}}$ 
equations for $N_h^{\text{hex}}$ unknown. Note that as we solve the problem in the sense of least squares, over-determination is
not a problem while under-determination is. 

\item This pair gives an excellent evaluation of the discrete $\emph{inf-sup}$ constant $\beta(T_h)$ that is the key element for 
discrete stability. 
\end{enumerate}

\end{remark}

\subsection{Inverse gradient problem}

Let $\Omega$ be the unit square $(0,1)^2.$ We approach here the solution $\mu\in L^\infty(\Omega)$ of the problem 
$-\nabla \mu = \g f$ where $\g f$ is given vectorial function. This case correspond to \eqref{eq:1} where $S=I$ everywhere.
In this case, many simplification occur as $T_h:=-\nabla|_{M_h}$ and then $\e_h^{\text{op}}=0$. 
Moreover $\rho(T)\leq 1$. In the absence of noise, the result of Theorem \ref{theo2} reads : $
\frac{\norm{\mu_{h}-\pi_h\mu}{M}}{\norm{\pi_h\mu}{M}} \leq \frac{4}{\beta\left(T_h\right)}\left(\e^{\text{rhs}}_h+\e^{\text{int}}_h(\mu)\right)$
where $\mu_h$ is the solution of $\min_{\mu\in M_h}\norm{T_h\mu -\g f}{V_h'}$ under the condition 
$\mu_h\in L^2_0(\Omega_h)$ i.e. $\int_{\Omega_h}\mu_h =0$.

Let first compute $\beta(T_h)$ using \eqref{eq:betah} at check its behavior when $h$ got to $0$. In figure \ref{fig:betah} we see
that it seem to converge to some $\beta_0>0$ lower than the conjectured \emph{inf-sup} constant $\beta(\nabla)=\sqrt{1/2-1/\pi}$ in 
the unit square (see \cite[Theorem 3.3]{costabel2015inf} for details about this conjectured value).

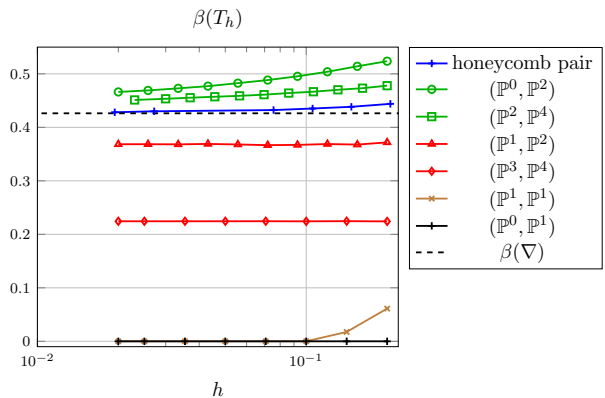
\begin{figure}\begin{center}
\begin{tikzpicture}[scale=0.7]
\begin{semilogxaxis}[
xmin = 0.01, xmax = 0.22, ymin = -0.01, ymax = 0.55,
grid = major,
xlabel={$h$},
 legend entries = {honeycomb pair,$(\PP^0{,\, }\PP^2)$,$(\PP^2{,\, }\PP^4)$,$(\PP^1{,\, }\PP^2)$,$(\PP^3{,\, }\PP^4)$,$(\PP^1{,\, }\PP^1)$,$(\PP^0{,\, }\PP^1)$,$\beta(\nabla)$,},
 legend pos=outer north east,
 title = $\beta(T_h)$
 ]
 
\addplot [ line width = 1 pt,blue, mark=+] table {
    0.0194    0.4282
    0.0272    0.4300
    0.0756    0.4323
    0.1056    0.4352
    0.1472    0.4384
    0.2056    0.4439
};

\addplot [line width = 1 pt,black!30!green,mark=o] table {
0.0200    0.4662
0.0258    0.4690
0.0334    0.4729
0.0431    0.4770
0.0557    0.4826
0.0719    0.4884
0.0928    0.4954
0.1199    0.5039
0.1549    0.5141
0.2000    0.5237
};

\addplot [ line width = 1 pt,black!30!green,mark=square] table {
0.0230    0.4512
0.0300    0.4534
0.0370    0.4554
0.0457    0.4572
0.0565    0.4589
0.0697    0.4611
0.0861    0.4641
0.1063    0.4666
0.1312    0.4702
0.1620    0.4732
0.2000    0.4779
};

\addplot [ line width = 1 pt,red,mark=triangle] table {
0.0200    0.3684
    0.0258    0.3685
    0.0334    0.3682
    0.0431    0.3690
    0.0557    0.3680
    0.0719    0.3667
    0.0928    0.3672
    0.1199    0.3688
    0.1549    0.3678
   0.2000    0.3719
};

\addplot [ line width = 1 pt,red,mark=diamond] table {
0.0200    0.2244
0.0250    0.2244
0.0354    0.2244
0.0500    0.2244
0.0707    0.2245
0.1000    0.2243
0.1414    0.2246
0.2000    0.2242
};

\addplot [ line width = 1 pt, brown,mark=x] table {
0.0200    0.0000
0.0250    0.0000
0.0354    0.0000
0.0500    0.0000
0.0707    0.0000
 0.1000    0.0001
0.1414    0.0175
0.2000    0.0612
};

\addplot [ line width = 1 pt, black,mark=+] table {
0.0200    0.0000
0.0250    0.0000
0.0354    0.0000
0.0500    0.0000
0.0707    0.0000
 0.1000    0.000
0.1414    0.0000
0.2000    0.0000
};

\addplot [domain=0.004:0.3,  black, line width = 1 pt, dashed] {0.42625};

\end{semilogxaxis}
\end{tikzpicture}
\caption{\label{fig:betah}Behavior of the discrete \emph{inf-sup} constant $\beta(T_h)$ for the inverse gradient problem in the unit square $\Omega:=(0,1)^2$, for various choices of pair of discretization spaces. The dashed line represents the conjectured value of the $\emph{inf-sup}$ constant $\beta(\nabla)=\sqrt{1/2-1/\pi}$ of the gradient operator in $\Omega$.} 
\end{center}\end{figure}

Consider now a smooth map $\mu_1(x):=\cos(10x_1)+\cos(10x_2)$ for $x\in\Omega$, for such a smooth function we expect an error of interpolation in $M_h$ of order $\e_h^{\text{int}}(\mu_1)=\cO(h)$ and an error of interpolation of its gradient on $V_h'$ of order $\e_h^{\text{rhs}}=\cO(h^2)$. Hence the relative error $E_1(h):={\norm{\mu_{1,h}-\pi_h\mu_1}{M}}/{\norm{\pi_h\mu_1}{M}}$ is expected to be at least of order $\cO(h)$. In figure \ref{fig:E1} we observe a convergence of order 2 in absence of noise. We retry the same test with piecewise constant $\mu_2$. Its derivative is approached first in $\PP^0(\Omega^{\text{tri}}_h)$ 
to deduce its vectorial form in $V_h'$. We observe a convergence of order $1/2$ in absence of noise.

To illustrate the stability with respect to noise on the right-had side, we corrupt the data $-\nabla \mu$ with 
the multiplication term-by-term by $1+\sigma\mathcal{N}$ where $\sigma>0$ is the noise level and $\mathcal{N}$ is a Gaussian random 
variable of variance one. 

\begin{figure}\begin{center}
\def\textscale{0.7}
\def\imagescale{0.3272}
\def\scale{\textscale}
\def\witdh{\imagescale/\textscale*5 cm}
\def\height{\imagescale/\textscale*1*5 cm}
\def\xmin{0}\def\xmax{1}\def\ymin{0}\def\ymax{1}

\begin{tikzpicture}[scale=\scale]
\begin{axis}[width=\witdh, height=\height, axis on top, scale only axis, xmin=\xmin, xmax=\xmax, ymin=\ymin, ymax=\ymax, colormap/jet, colorbar, point meta min=-2, point meta max=2]
\addplot graphics [xmin=\xmin,xmax=\xmax,ymin=\ymin,ymax=\ymax]{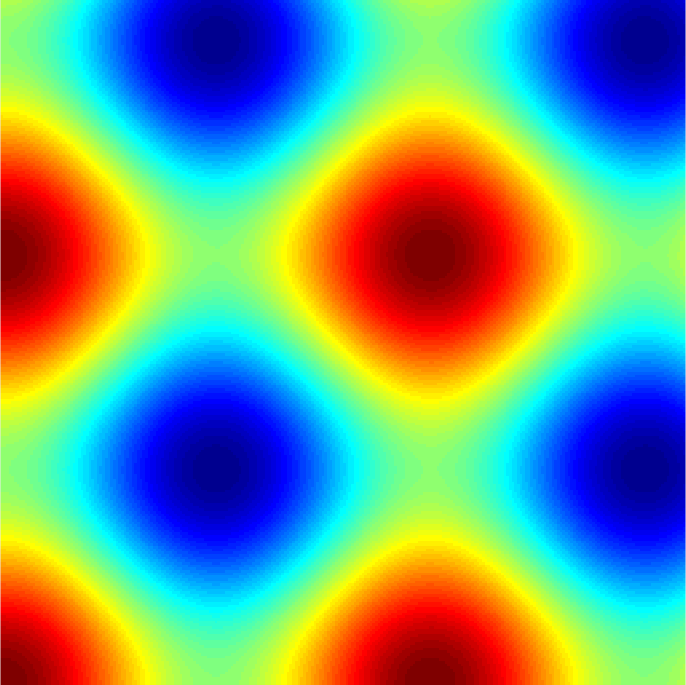};
\end{axis}
\end{tikzpicture}
\begin{tikzpicture}[scale=\scale]
\begin{axis}[width=\witdh, height=\height, axis on top, scale only axis, xmin=\xmin, xmax=\xmax, ymin=\ymin, ymax=\ymax, colormap/jet, colorbar,point meta min=-2,point meta max=2]
\addplot graphics [xmin=\xmin,xmax=\xmax,ymin=\ymin,ymax=\ymax]{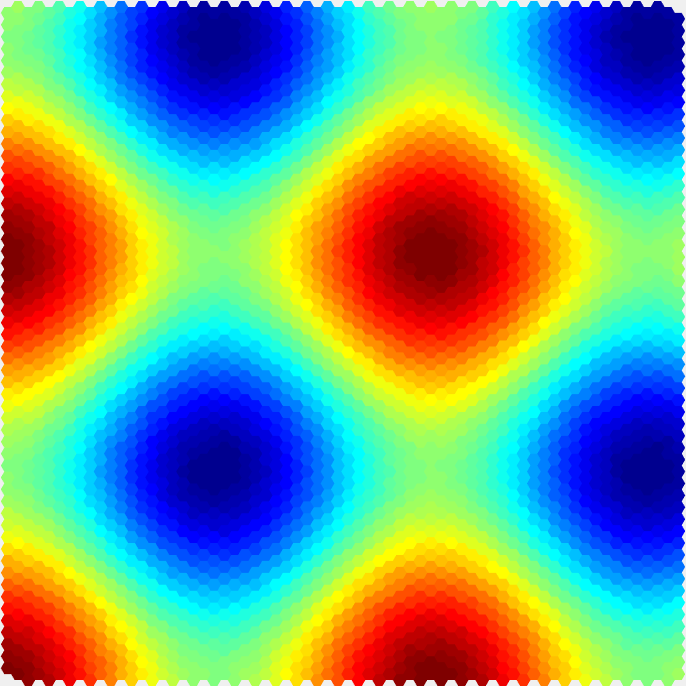};
\end{axis}
\end{tikzpicture}
\begin{tikzpicture}[scale=\scale]
\begin{axis}[width=\witdh, height=\height, axis on top, scale only axis, xmin=\xmin, xmax=\xmax, ymin=\ymin, ymax=\ymax, colormap/jet, colorbar,point meta min=-2,point meta max=2]
\addplot graphics [xmin=\xmin,xmax=\xmax,ymin=\ymin,ymax=\ymax]{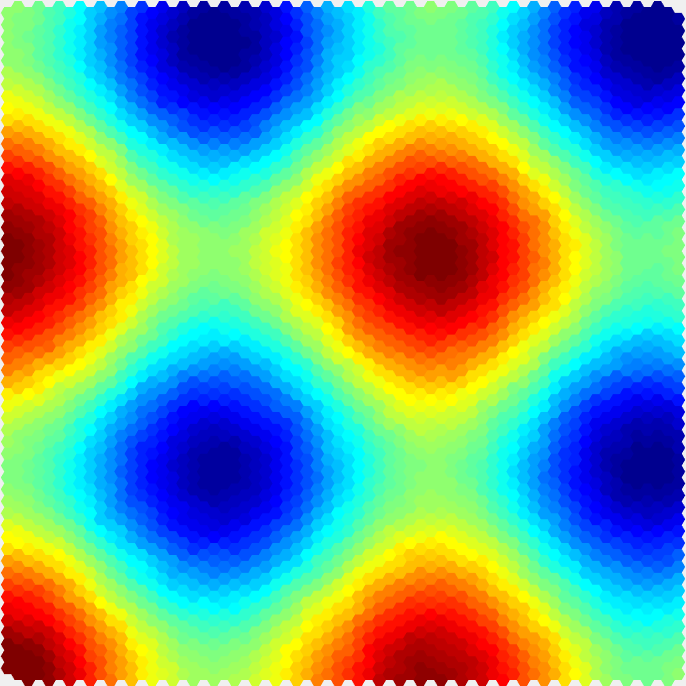};
\end{axis}
\end{tikzpicture}
\begin{tikzpicture}[scale=\scale]
\begin{axis}[width=\witdh, height=\height, axis on top, scale only axis, xmin=\xmin, xmax=\xmax, ymin=\ymin, ymax=\ymax, colormap/jet, colorbar,point meta min=-2,point meta max=2]
\addplot graphics [xmin=\xmin,xmax=\xmax,ymin=\ymin,ymax=\ymax]{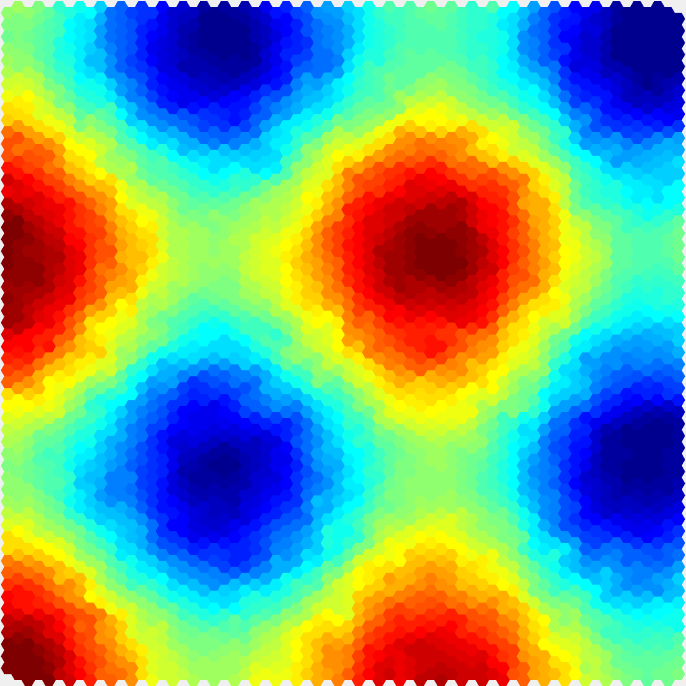};
\end{axis}
\end{tikzpicture}

\begin{tikzpicture}[scale=\scale]
\begin{axis}[width=\witdh, height=\height, axis on top, scale only axis, xmin=\xmin, xmax=\xmax, ymin=\ymin, ymax=\ymax, colormap/jet, colorbar,point meta min=-1.0735,point meta max=1.9265]
\addplot graphics [xmin=\xmin,xmax=\xmax,ymin=\ymin,ymax=\ymax]{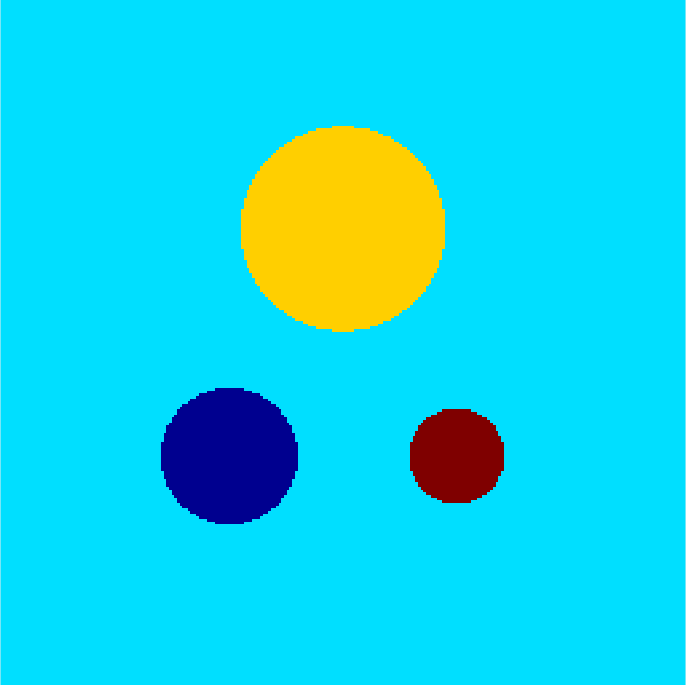};
\end{axis}
\end{tikzpicture}
\begin{tikzpicture}[scale=\scale]
\begin{axis}[width=\witdh, height=\height, axis on top, scale only axis, xmin=\xmin, xmax=\xmax, ymin=\ymin, ymax=\ymax, colormap/jet, colorbar,point meta min=-1.0735,point meta max=1.9265]
\addplot graphics [xmin=\xmin,xmax=\xmax,ymin=\ymin,ymax=\ymax]{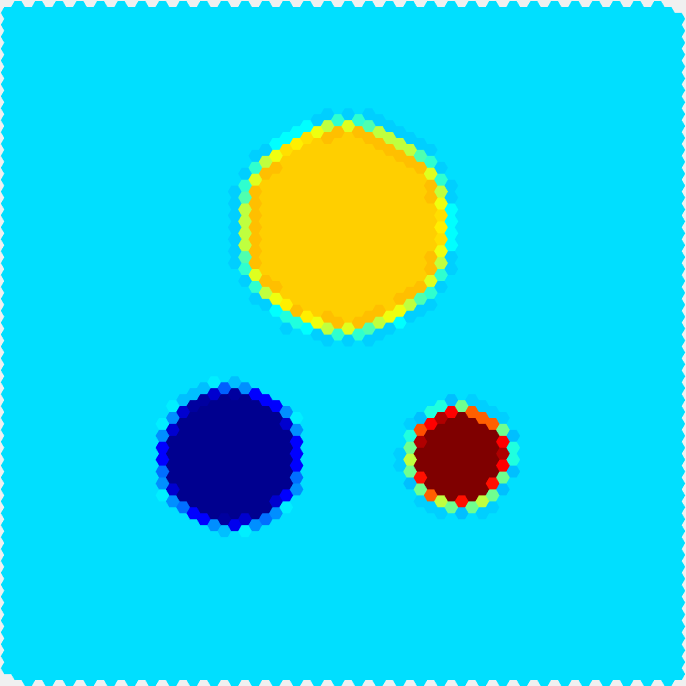};
\end{axis}
\end{tikzpicture}
\begin{tikzpicture}[scale=\scale]
\begin{axis}[width=\witdh, height=\height, axis on top, scale only axis, xmin=\xmin, xmax=\xmax, ymin=\ymin, ymax=\ymax, colormap/jet, colorbar,point meta min=-1.0735,point meta max=1.9265]
\addplot graphics [xmin=\xmin,xmax=\xmax,ymin=\ymin,ymax=\ymax]{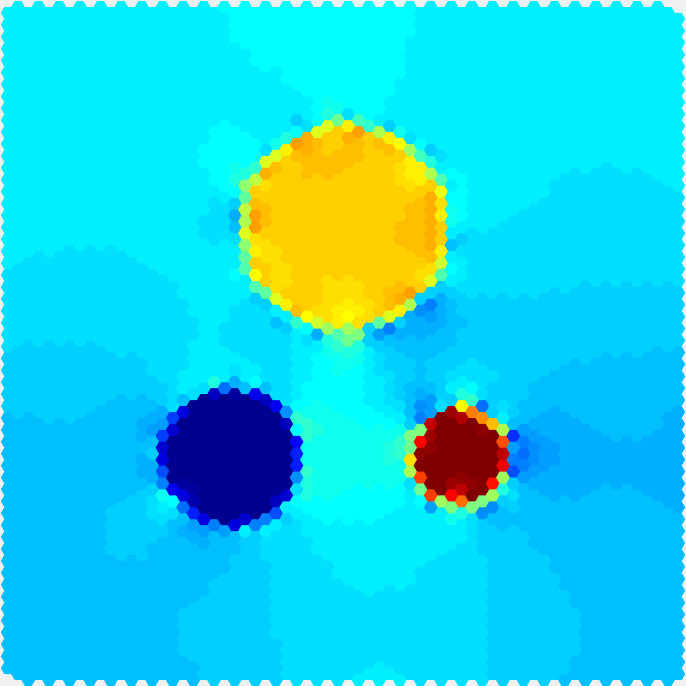};
\end{axis}
\end{tikzpicture}
\begin{tikzpicture}[scale=\scale]
\begin{axis}[width=\witdh, height=\height, axis on top, scale only axis, xmin=\xmin, xmax=\xmax, ymin=\ymin, ymax=\ymax, colormap/jet, colorbar,point meta min=-1.0735,point meta max=1.9265]
\addplot graphics [xmin=\xmin,xmax=\xmax,ymin=\ymin,ymax=\ymax]{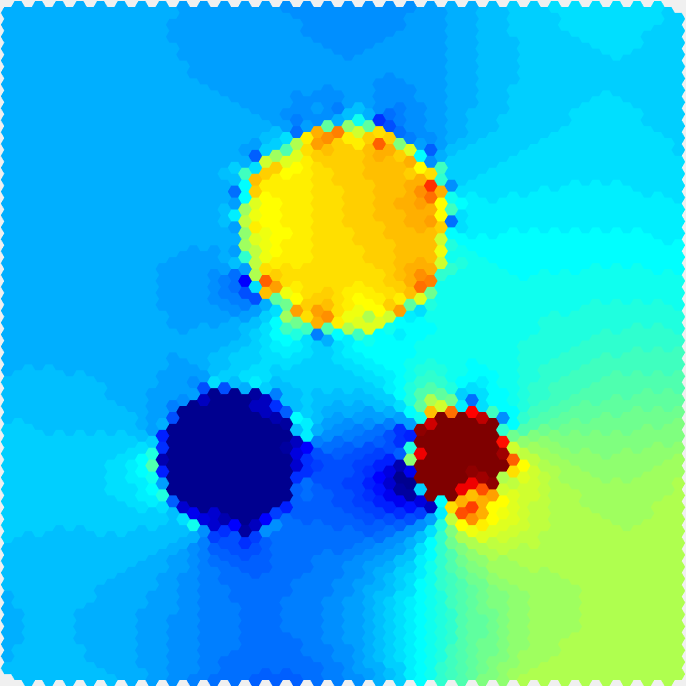};
\end{axis}
\end{tikzpicture}
\caption{\label{fig:reco1} Numerical stability of the reconstruction of maps $\mu_1$ and $\mu_2$  using method given by Theorem \ref{theo2} with resolution $h=0.01$. From left to right: column 1: exact map to recover, 2. reconstruction with no noise, column 3: reconstruction with noise level $\sigma = 1$, column 4: reconstruction with noise level $\sigma = 2$.}
\end{center}\end{figure}

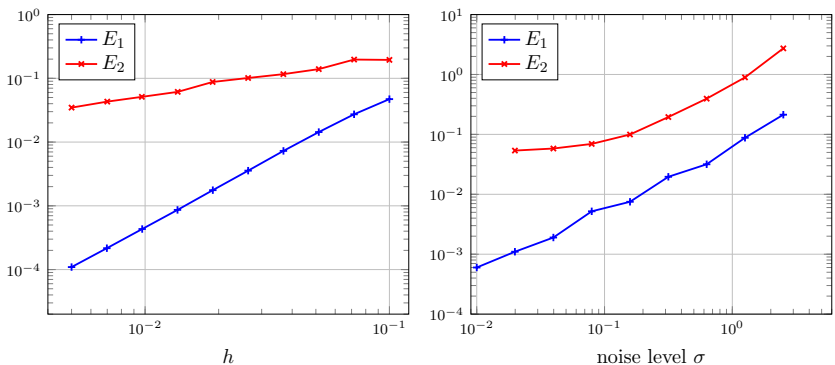
\begin{figure}\begin{center}
\begin{tikzpicture}[scale=0.7]
\begin{loglogaxis}[xmin = 0.004, xmax = 0.12, ymin = 2e-5, ymax = 1, grid = major, xlabel={$h$}, legend entries = {$E_1$,$E_2$}, legend pos = north west]
\addplot[ line width = 1 pt, blue,mark=+] table {
0.0050000   0.0001094
0.0069747   0.0002164
0.0097294   0.0004325
0.0135720   0.0008626
0.0189323   0.0017608
0.0264097   0.0035608
0.0368403   0.0072698
0.0513904   0.0143510
0.0716871   0.0272670
0.1000000   0.0471591
};
\addplot[ line width = 1 pt, red,mark=x] table {
0.0050    0.0347
0.0070    0.0431
0.0097    0.0512
0.0136    0.0613
0.0189    0.0876
0.0264    0.1012
0.0368    0.1162
0.0514    0.1394
0.0717    0.1969
0.1000    0.1945
};

\end{loglogaxis}
\end{tikzpicture}
\begin{tikzpicture}[scale=0.7]
\begin{loglogaxis}[
xmin = 0.009, xmax =6, ymin = 1e-4, ymax = 10, grid = major, xlabel={noise level $\sigma$}, legend entries = {$E_1$,$E_2$}, legend pos = north west,]
\addplot[ line width = 1 pt, blue,mark=+] table {
0.0100    0.0006
0.0199    0.0011
0.0398    0.0019
0.0794    0.0052
0.1583    0.0075
0.3158    0.0196
0.6300    0.0317
1.2566    0.0878
2.5066    0.2131
};

\addplot[line width = 1 pt, red,mark=x] table {
0.0199    0.0537
0.0398    0.0581
0.0794    0.0693
0.1583    0.0996
0.3158    0.1948
0.6300    0.3949
1.2566    0.8937
2.5066    2.7282
};

\end{loglogaxis}
\end{tikzpicture}
\caption{\label{fig:E1} Left : relative $L^2$-error on the reconstruction with respect to $h$ in the absence of noise. Right : relative $L^2$-error on the reconstruction with respect to the noise level $\sigma$ with $h=0.01$.} 
\end{center}\end{figure}

\subsection{Quasi-static elastography}

\paragraph{Forward problem} To illustrate the ability of solving a quasi-static elastography problem 
in the case $\lambda=0$ from a single measurement, we compute a virtual data field by solving the linear elastic forward problem 
\begin{equation}\label{eq:forward}\left\{\begin{aligned}
-\nabla\cdot(2\mu_{\text{exact}}\, \cE(\g u)) &= \g 0\quad\tin (0,1)^2,\\
2\mu_{\text{exact}}\, \cE(\g u)\cdot \g \nu &= \g f\quad\ton (0,1)\times\{1\},\\
\cE(\g u)\cdot \g \nu &= \g 0\quad\ton (0,1)\times\{0\},\\
\g u &= \g 0,\quad\ton \{0,1\}\times (0,1).\\
\end{aligned}\right.\end{equation}
where $\mu_{\text{exact}}$ is described in Figure \ref{fig:staticdata}. We chose here a constant 
boundary force $\g f:=(1,-1)^T$. This problem is solved using classic $\PP^1$ finite element method over an unstructured 
triangular mesh. The computed data field $\g u$ is then stored in a cartesian grid to avoid any numerical inverse crime. 
It is represented in Figure \ref{fig:staticdata}.\\

\begin{figure}\begin{center}
\def\textscale{0.7}
\def\imagescale{0.5}
\def\scale{\textscale}
\def\witdh{\imagescale/\textscale*5 cm}
\def\height{\imagescale/\textscale*1*5 cm}
\def\xmin{0}\def\xmax{1}\def\ymin{0}\def\ymax{1}

\def\scale{\textscale}
\def\witdh{\imagescale/\textscale*5 cm}
\def\height{\imagescale/\textscale*1*5 cm}
\def\xmin{0}\def\xmax{1}\def\ymin{0}\def\ymax{1}

\begin{tikzpicture}[scale=\scale]
\begin{axis}[title = $\mu_{\text{exact}}$, width=\witdh, height=\height, axis on top, scale only axis, xmin=\xmin, xmax=\xmax, ymin=\ymin, ymax=\ymax, colormap/jet, colorbar,point meta min=0.5,point meta max=2]
\addplot graphics [xmin=\xmin,xmax=\xmax,ymin=\ymin,ymax=\ymax]{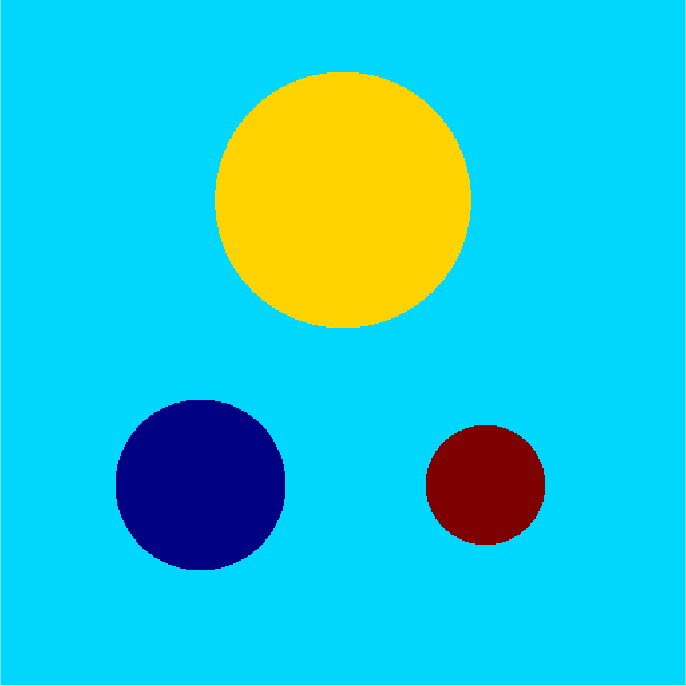};
\end{axis}
\end{tikzpicture}
\begin{tikzpicture}[scale=\scale]
\begin{axis}[title = $u_1$, width=\witdh, height=\height, axis on top, scale only axis, xmin=\xmin, xmax=\xmax, ymin=\ymin, ymax=\ymax, colormap/jet, colorbar,point meta min=-1.6793,point meta max=1.6793]
\addplot graphics [xmin=\xmin,xmax=\xmax,ymin=\ymin,ymax=\ymax]{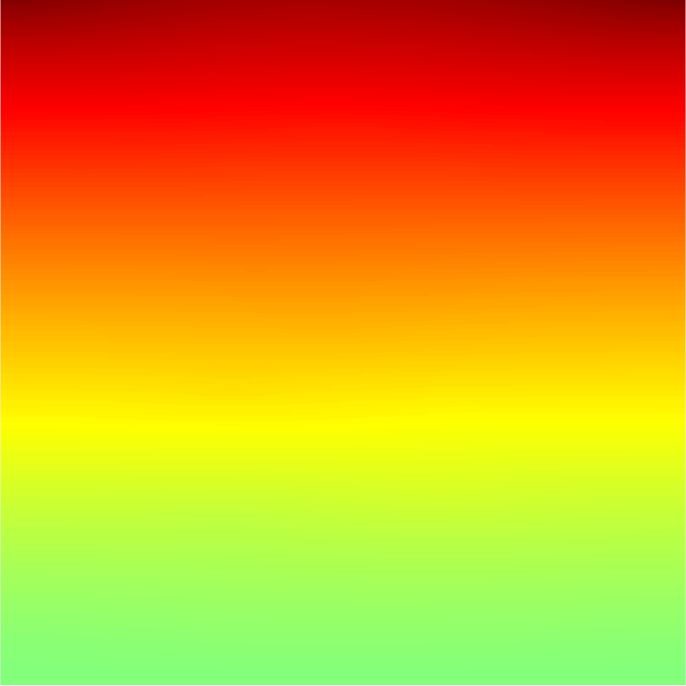};
\end{axis}
\end{tikzpicture}
\begin{tikzpicture}[scale=\scale]
\begin{axis}[title = $u_2$,width=\witdh, height=\height, axis on top, scale only axis, xmin=\xmin, xmax=\xmax, ymin=\ymin, ymax=\ymax, colormap/jet, colorbar,point meta min=-1.6793,point meta max=1.6793]
\addplot graphics [xmin=\xmin,xmax=\xmax,ymin=\ymin,ymax=\ymax]{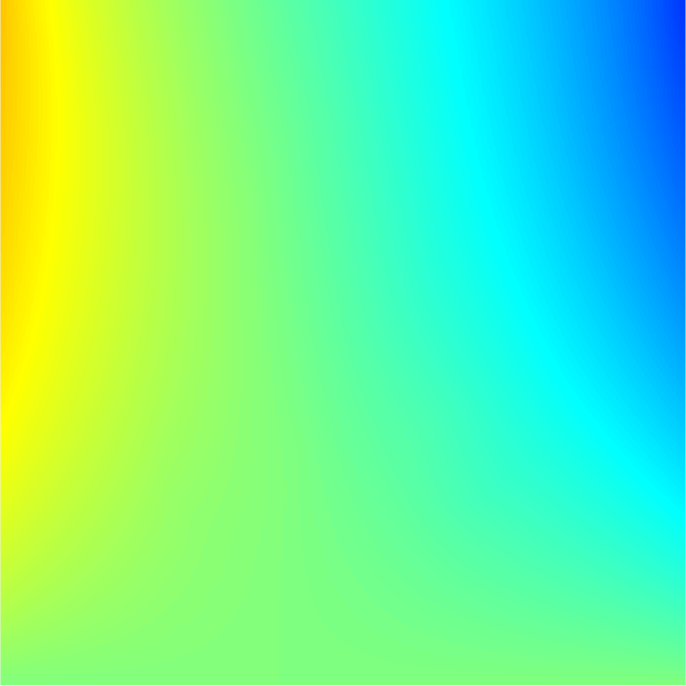};
\end{axis}
\end{tikzpicture}

\caption{\label{fig:staticdata} First line, from left to right: The exact map $\mu_\text{exact}$, the two components of 
the data field $\g u=(u_1,u_2)$ computed via \eqref{eq:forward}, the only data used to inverse the problem.}
\end{center}\end{figure}
\paragraph{Inverse problem} From this data, we approach 
the matrix $S:=2\cE(\g u)$ through an exact differentiation of $\g u$ on the finite element space.
We then chose a particular pair of spaces $(M_h,V_h)$ suitable 
for the inverse parameter problem and we define the matrix form of the approached operator $T_h$.
Before applying Theorem \ref{theo1} we compute the discrete values of $\alpha(T_h)$ and
$\beta(T_h)$ for few pairs of spaces (see Figure \ref{fig:static_betah}). We here control that $\beta(T_h)$ does not 
vanish and that the ratio $\alpha(T_h)/\beta(T_h)$ is small enough. We recall that this is needed for good error estimates using Theorem \ref{theo1}. 
Note that the honeycomb pair shows a much better behavior than the other consider pairs of spaces. \\

We plot now solutions $\mu_h$ of the numerical inversion with various choice of pair of spaces 
in Figure \ref{fig:resultelasto}.  Then in Figure \ref{fig:table} we present tables of comparisons of
different pair of spaces in terms of relative error and complexity through the number of degrees 
of freedom and number of equations. In particular,
\begin{itemize}
 \item As expected and for all choice of pair of spaces satisfying inf-sup condition, 
 the numerical approximation  ${\bf u}$ gives some nice reconstruction of  the elastic 
 coefficient $2 \mu_{exact}$. Moreover, in each case, we also clearly observe  a quantitative convergence as 
 $h \to 0$.
 \item The numerical solutions obtained with the honeycomb approach  give some better reconstruction 
 than using other pair of spaces. It can be explained by a better ratio $\alpha(T_h)/\beta(T_h)$.
 \item The use of high degree as with the pair of spaces $(\PP^4{,}\PP^2)$
 raises some  numerical memory issues in the computation the matrix  
 $\cB_M^{-1}$ and $\cS_V^{-1}$. In particular, we don't succeed to reach time steps $h$ smaller than
 $h = 0.025$ with a standard laptop. 
  \item From a computation cost point of view, the honeycomb approach has also many advantages. 
 The matrix $\cS_M$ and $\cS_V$ are respectively  diagonal and tri-diagonal which 
 greatly facilitate the computation of $\cB_M^{-1} = \sqrt{\cS_M^{-1}}$ and $\cS_V^{-1}$.
 Finally, we can reach much finer resolutions than using  other finite element space proposed in this paper.
\end{itemize}

\def\scale{0.49}

\begin{figure}\begin{center}
\begin{tikzpicture}[scale=\scale]
\begin{loglogaxis}[
xmin = 0.01, xmax = 0.2, ymin = 1e-6, ymax = 1,
grid = major,
xlabel={$h$},
 legend pos=outer north east,
 title = $\alpha(T_h)$
 ]
 
\addplot [ line width = 2 pt,blue, mark=+] table {
0.020    0.0028
0.026    0.0035
0.037    0.0044
0.05    0.0044
0.07    0.0067
0.10    0.0091
0.14    0.0100
};

\addplot [line width = 1 pt,black!30!green,mark=o] table {
   0.140000000000000   0.155363245837991
   0.100000000000000   0.109304710221934
   0.070000000000000   0.079303727641809
   0.050000000000000   0.070727838058673
   0.037000000000000   0.058152609779338
   0.025000000000000   0.05495301874895
};

\addplot [ line width = 1 pt,black!30!green,mark=square] table {
0.1400    0.1222
0.1000    0.0906
0.0700    0.0803
0.0500    0.0647
0.0370    0.0490
0.0250    0.0437
0.0200      0.039
};

\addplot [ line width = 1 pt,red,mark=triangle] table {
0.140000000000000   0.104681158114168
   0.100000000000000   0.099847657411454
   0.070000000000000   0.086941661566795
   0.050000000000000   0.071026031445884
   0.037000000000000   0.063004885377747
   0.025000000000000   0.050633497357910
   0.020000000000000   0.040419122053869

};

\addplot [ line width = 1 pt,red,mark=diamond] table {
0.1400    0.0901
    0.1000    0.0628
    0.0700    0.0499
    0.0500    0.0431
    0.0370    0.0328
    0.0250    0.0269
    0.02      0.022
};

\addplot [ line width = 1 pt, brown,mark=x] table {
0.140000000000000   0.032627171888366
   0.100000000000000   0.024227784245503
   0.070000000000000   0.000014764730127
   0.050000000000000   0.000006309867297
   0.037000000000000   0.000003963102423
};

\end{loglogaxis}
\end{tikzpicture}
\begin{tikzpicture}[scale=\scale]
\begin{loglogaxis}[
xmin = 0.01, xmax = 0.2, ymin = 1e-6, ymax = 1,
grid = major,
xlabel={$h$},
 legend pos=outer north east,
 title = $\beta(T_h)$
 ]
 
\addplot [ line width = 1 pt,blue, mark=+] table {
0.020    0.0417
0.026    0.0449
0.034    0.046
0.05    0.0490
0.069    0.0589
0.10    0.0704
0.14    0.0672
};

\addplot [line width = 1 pt,black!30!green,mark=o] table {
   0.140000000000000   0.314144326441098
   0.100000000000000   0.269616915589222
   0.070000000000000   0.209782255618618
   0.050000000000000   0.157214484915668
   0.037000000000000   0.146348986206567
   0.025000000000000   0.133214036416270
   0.020000000000000   0.123676345435032
  0.015000000000000   0.114743023061429

};

\addplot [ line width = 1 pt,black!30!green,mark=square] table {
0.1400    0.1970
    0.1000    0.1845
    0.0700    0.1689
    0.0500    0.1562
    0.0370    0.1353
    0.0250    0.1150
    0.0200    0.105
};

\addplot [ line width = 1 pt,red,mark=triangle] table {
0.140000000000000   0.211910481040829
   0.100000000000000   0.195619138908986
   0.070000000000000   0.165986325019177
   0.050000000000000   0.168073473359580
   0.037000000000000   0.144946446010525
   0.025000000000000   0.133832422250369
   0.020000000000000   0.125446077545610
};

\addplot [ line width = 1 pt,red,mark=diamond] table {
0.1400    0.1060
    0.1000    0.1203
    0.0700    0.0935
    0.0500    0.0930
    0.0370    0.0879
    0.0250    0.0806
    0.02       0.078
};

\addplot [ line width = 1 pt, brown,mark=x] table {
0.140000000000000   0.069821502297676
   0.100000000000000   0.037692188614761
   0.070000000000000   0.000018543203196
   0.050000000000000   0.000010918532134
   0.037000000000000   0.000013290103057

};

\end{loglogaxis}
\end{tikzpicture}
\begin{tikzpicture}[scale=\scale]
\begin{loglogaxis}[
xmin = 0.01, xmax = 0.2, ymin = 1e-2, ymax = 1,
grid = major,
xlabel={$h$},
 legend entries = {honeycomb,$(\PP^0{,\, }\PP^2)$,$(\PP^2{,\, }\PP^4)$,$(\PP^1{,\, }\PP^2)$,$(\PP^3{,\, }\PP^4)$,$(\PP^1{,\, }\PP^1)$,$(\PP^0{,\, }\PP^1)$},
 legend pos=outer north east,
 title = $\alpha(T_h)/\beta(T_h)$
 ]
 
\addplot [ line width = 1 pt,blue, mark=+] table {
    0.0200    0.0513
    0.0280    0.0644
    0.0400    0.0671
    0.0520    0.0780
    0.0680    0.0957
    0.1000    0.0898
    0.1380    0.1138
};

\addplot [line width = 1 pt,black!30!green,mark=o] table {
0.140000000000000   0.494560088345641
   0.100000000000000   0.405407464821188
   0.070000000000000 0.378028768009734  
   0.050000000000000  0.449881180456194 
   0.037000000000000  0.397355740457658 
   0.025000000000000  0.412516730423488 
   0.020000000000000   0.378917033253727 
};

\addplot [ line width = 1 pt,black!30!green,mark=square] table {
    0.1400    0.6201
    0.1000    0.4912
    0.0700    0.4757
    0.0500    0.4141
    0.0370    0.3622
    0.0250    0.3794
    0.02       0.3714
};

\addplot [ line width = 1 pt,red,mark=triangle] table {
0.140000000000000   0.493987638553842
   0.100000000000000   0.510418653145740
   0.070000000000000   0.523788098548182
   0.050000000000000   0.422589180946652
   0.037000000000000   0.434676993550929
   0.025000000000000   0.378335058923067
   0.020000000000000   0.322203155687938

};

\addplot [ line width = 1 pt,red,mark=diamond] table {
0.1400    0.8502
    0.1000    0.5223
    0.0700    0.5338
    0.0500    0.4634
    0.0370    0.3729
    0.0250    0.3338
    0.02       0.2821
};

\addplot [ line width = 1 pt, brown,mark=x] table {
0.140000000000000   0.467294040011682
   0.100000000000000   0.642779980041143
   0.070000000000000   0.796234068654477
   0.050000000000000   0.577904357456867
   0.037000000000000   0.298199525344108
   0.025000000000000   0.776606893060210
};
\end{loglogaxis}
\end{tikzpicture}
\caption{\label{fig:static_betah}Behavior of the contants $\alpha(T_h)$, $\beta(T_h)$ and the ratio $\alpha(T_h)/\beta(T_h)$ for the inverse static elastography problem in the unit square $\Omega:=(0,1)^2$, for various choices of pair of discretization spaces.} 
\end{center}\end{figure}
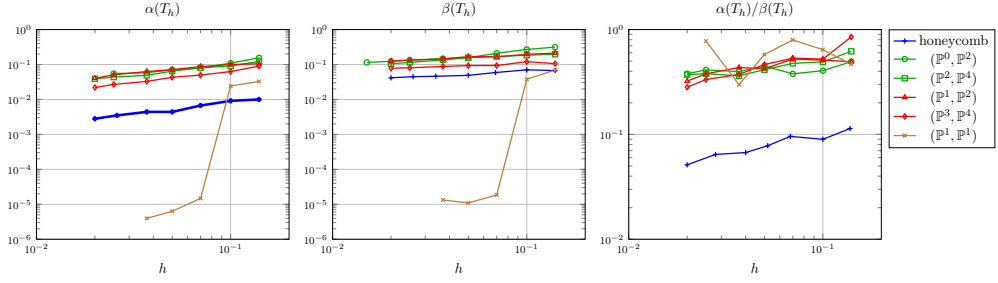
%


\begin{figure}\begin{center}
\def\textscale{0.7}
\def\imagescale{0.54}
\def\scale{\textscale}
\def\witdh{\imagescale/\textscale*5 cm}
\def\height{\imagescale/\textscale*1*5 cm}
\def\xmin{0}\def\xmax{1}\def\ymin{0}\def\ymax{1}

\def\xmin{0.05}\def\xmax{0.95}\def\ymin{0.05}\def\ymax{0.95}

\hspace{0.2cm}
 \begin{tikzpicture}[scale=\scale]
\begin{axis}[width=\witdh, height=\height, axis on top, scale only axis, xmin=\xmin, xmax=\xmax, ymin=\ymin, ymax=\ymax, colormap/jet, colorbar,point meta min=1,point meta max=4,ylabel=honeycomb,title={$h=0.05$}]
\addplot graphics [xmin=\xmin,xmax=\xmax,ymin=\ymin,ymax=\ymax]{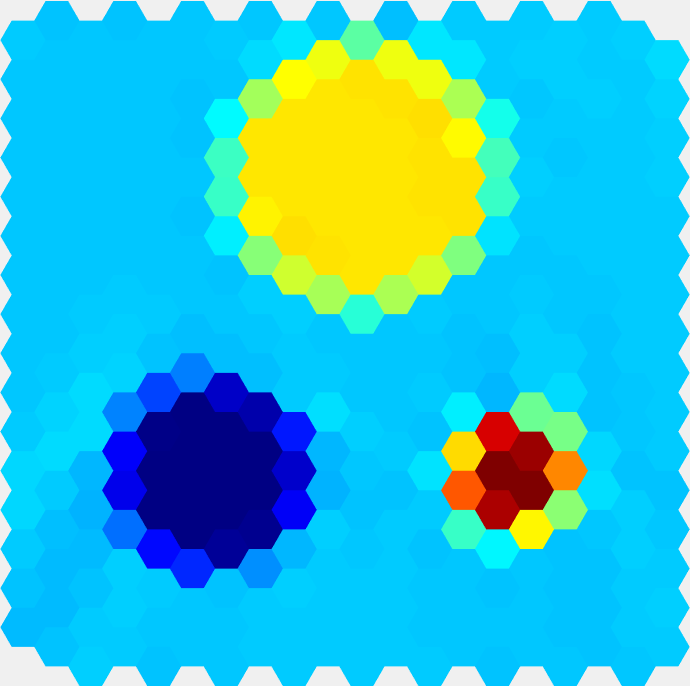};
\end{axis}
\end{tikzpicture}
 \begin{tikzpicture}[scale=\scale]
\begin{axis}[width=\witdh, height=\height, axis on top, scale only axis, xmin=\xmin, xmax=\xmax, ymin=\ymin, ymax=\ymax, colormap/jet, colorbar,point meta min=1,point meta max=4,title={$h=0.025$}]
\addplot graphics [xmin=\xmin,xmax=\xmax,ymin=\ymin,ymax=\ymax]{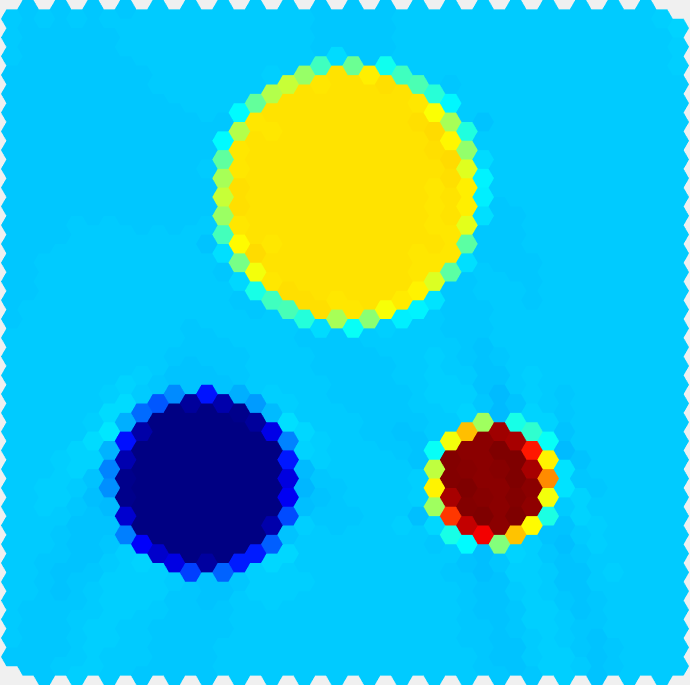};
\end{axis}
\end{tikzpicture}
 \begin{tikzpicture}[scale=\scale]
\begin{axis}[width=\witdh, height=\height, axis on top, scale only axis, xmin=\xmin, xmax=\xmax, ymin=\ymin, ymax=\ymax, colormap/jet, colorbar,point meta min=1,point meta max=4,title={$h=0.01$}]
\addplot graphics [xmin=\xmin,xmax=\xmax,ymin=\ymin,ymax=\ymax]{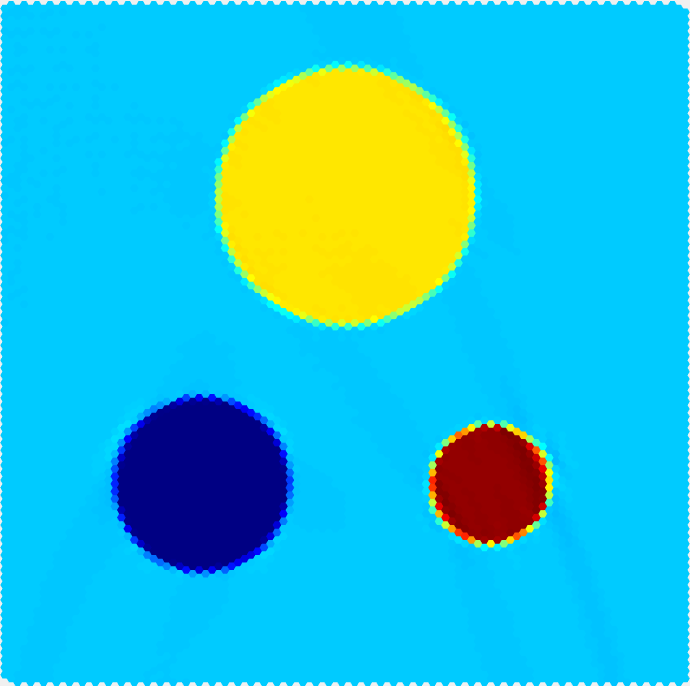};
\end{axis}
\end{tikzpicture}

 \begin{tikzpicture}[scale=\scale]
\begin{axis}[width=\witdh, height=\height, axis on top, scale only axis, xmin=\xmin, xmax=\xmax, ymin=\ymin, ymax=\ymax, colormap/jet, colorbar,point meta min=1,point meta max=4,ylabel=$(\PP^0{,}\PP^2)$, ylabel style={rotate=-90}]
\addplot graphics [xmin=\xmin,xmax=\xmax,ymin=\ymin,ymax=\ymax]{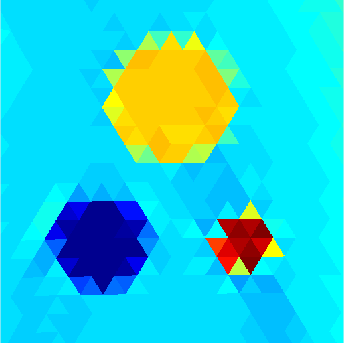};
\end{axis}
\end{tikzpicture}
 \begin{tikzpicture}[scale=\scale]
\begin{axis}[width=\witdh, height=\height, axis on top, scale only axis, xmin=\xmin, xmax=\xmax, ymin=\ymin, ymax=\ymax, colormap/jet, colorbar,point meta min=1,point meta max=4]
\addplot graphics [xmin=\xmin,xmax=\xmax,ymin=\ymin,ymax=\ymax]{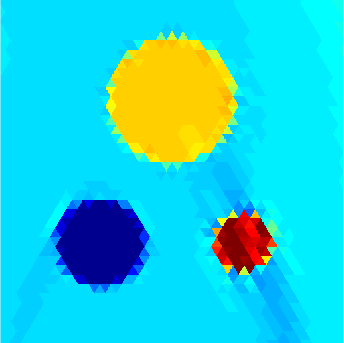};
\end{axis}
\end{tikzpicture}
 \begin{tikzpicture}[scale=\scale]
\begin{axis}[width=\witdh, height=\height, axis on top, scale only axis, xmin=\xmin, xmax=\xmax, ymin=\ymin, ymax=\ymax, colormap/jet, colorbar,point meta min=1,point meta max=4]
\addplot graphics [xmin=\xmin,xmax=\xmax,ymin=\ymin,ymax=\ymax]{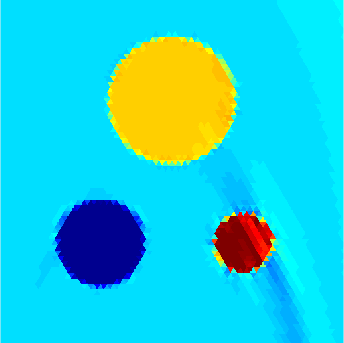};
\end{axis}
\end{tikzpicture}

\begin{tikzpicture}[scale=\scale]
\begin{axis}[width=\witdh, height=\height, axis on top, scale only axis, xmin=\xmin, xmax=\xmax, ymin=\ymin, ymax=\ymax, colormap/jet, colorbar,point meta min=1,point meta max=4,ylabel=$(\PP^1{,}\PP^2)$, ylabel style={rotate=-90}  ]
\addplot graphics [xmin=\xmin,xmax=\xmax,ymin=\ymin,ymax=\ymax]{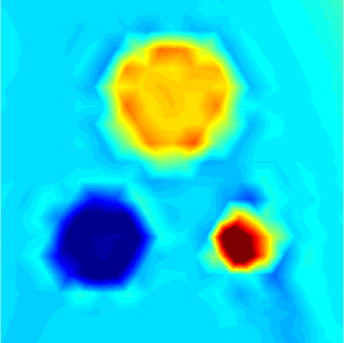};
\end{axis}
\end{tikzpicture}
 \begin{tikzpicture}[scale=\scale]
\begin{axis}[width=\witdh, height=\height, axis on top, scale only axis, xmin=\xmin, xmax=\xmax, ymin=\ymin, ymax=\ymax, colormap/jet, colorbar,point meta min=1,point meta max=4]
\addplot graphics [xmin=\xmin,xmax=\xmax,ymin=\ymin,ymax=\ymax]{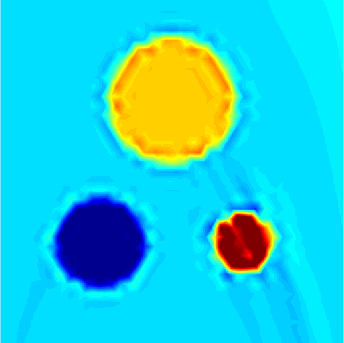};
\end{axis}
\end{tikzpicture}
 \begin{tikzpicture}[scale=\scale]
\begin{axis}[width=\witdh, height=\height, axis on top, scale only axis, xmin=\xmin, xmax=\xmax, ymin=\ymin, ymax=\ymax, colormap/jet, colorbar,point meta min=1,point meta max=4]
\addplot graphics [xmin=\xmin,xmax=\xmax,ymin=\ymin,ymax=\ymax]{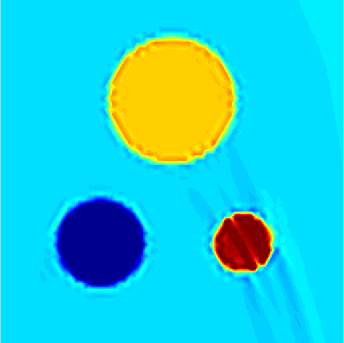};
\end{axis}
\end{tikzpicture}

 \begin{tikzpicture}[scale=\scale]
\begin{axis}[width=\witdh, height=\height, axis on top, scale only axis, xmin=\xmin, xmax=\xmax, ymin=\ymin, ymax=\ymax, colormap/jet, colorbar,point meta min=1,point meta max=4,ylabel=$(\PP^3{,}\PP^4)$, ylabel style={rotate=-90},title={$h=0.1$} ]
\addplot graphics [xmin=\xmin,xmax=\xmax,ymin=\ymin,ymax=\ymax]{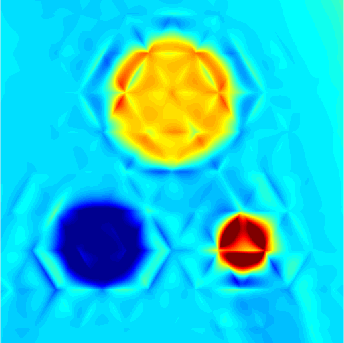};
\end{axis}
\end{tikzpicture}
 \begin{tikzpicture}[scale=\scale]
\begin{axis}[width=\witdh, height=\height, axis on top, scale only axis, xmin=\xmin, xmax=\xmax, ymin=\ymin, ymax=\ymax, colormap/jet, colorbar,point meta min=1,point meta max=4,,title={$h=0.05$} ]
\addplot graphics [xmin=\xmin,xmax=\xmax,ymin=\ymin,ymax=\ymax]{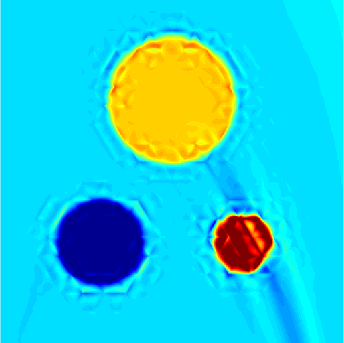};
\end{axis}
\end{tikzpicture}
 \begin{tikzpicture}[scale=\scale]
\begin{axis}[width=\witdh, height=\height, axis on top, scale only axis, xmin=\xmin, xmax=\xmax, ymin=\ymin, ymax=\ymax, colormap/jet, colorbar,point meta min=1,point meta max=4,title={$h=0.025$}]
\addplot graphics [xmin=\xmin,xmax=\xmax,ymin=\ymin,ymax=\ymax]{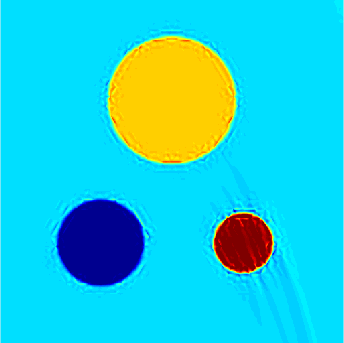};
\end{axis}
\end{tikzpicture}

\caption{\label{fig:resultelasto}Reconstruction of the shear modulus map $\mu$ using various pairs of finite element spaces in the 
subdomain of interest $(0.1,0.9)^2$.}
\end{center}\end{figure}


\begin{figure}\begin{center}
\small{
\begin{tabular}[title=a]{ |c||c|c|c|c| } 
\hline
$h=0.05$& $E$ & $n$ & $p$ \\
\hline
honeycomb & $9.2\%$ & 338 & 1888\\ 
$(\PP^0{,}\PP^2)$ &  $9.1\%$ & 735 & 2788 \\ 
$(\PP^1{,}\PP^2)$ & $8.5\%$ & 407 & 2800 \\ 
$(\PP^3{,}\PP^4)$ & $5.4\%$ & 3424 & 11k \\ 
 \hline
\end{tabular}
\hspace{1cm}
\begin{tabular}[title=a]{ |c||c|c|c| } 
\hline
$h=0.025$& $E$ & $n$ & $p$ \\
\hline
honeycomb & $6.3\%$ & 1510 & 8765 \\ 
$(\PP^0{,}\PP^2)$ & $6.7\%$ & 2982 & 12k \\ 
$(\PP^1{,}\PP^2)$ & $5.7\%$ & 1570 & 12k \\ 
$(\PP^3{,}\PP^4)$ & $3.4\%$ & 13654 & 47k \\ 
 \hline
\end{tabular}
}
\caption{\label{fig:table}Comparison of four pairs of finite element spaces in term of relative error 
$E$ of the reconstruction, degrees of freedom $n$, and number of equations $p$. The product $n\, p$ is an indication 
of the algorithmic complexity.}
\end{center}\end{figure}

\section{Concluding remarks}

In this article we have proved the numerical stability of the Galerkin 
approximation of the inverse parameter problem arising from the elastography in medical imaging. It as been done trough a direct discretization of the Reverse Weak Formulation without boundary conditions. The obtained stability estimates arises from a generalization of the \emph{inf-sup} constant (continuous and discrete) 
to a large class of first order differential operator. These results shed light on 
the importance of the choice of finite element spaces to assure uniqueness and stability. 
Various numerical applications have been presented that illustrate the stability theorems.
A new pair of finite element spaces based on an hexagonal tilling has been introduced. 
It showed excellent stability behavior for the specific purpose of this inverse problem.

\newpage

\appendix
\section{A result on self-adjoint operators}

\begin{lemma}\label{prop:app1} Let $H$ be an Hilbert space and $S:H\to H$ be a self-adjoint positive semi-definite linear operator. 
Call $\alpha^2:=\inf\{\left<Sx,x\right>_H|\ \norm{x}{H}=1\}$ and $z\in H$ such that $\norm{z}{H}=1$ and take $\left<Sz,z\right>_H\leq\alpha^2+\e^2$ with $\e> 0$. For any $p\perp z$ with $\norm{p}{H}=1$ we have
\begin{equation}\nonumber
\left|\inner{Sz,p}H\right|\leq \e\sqrt{\rho^2 -\alpha^2}
\end{equation}x
where $\rho^2:=\sup\{\left<Sx,x\right>_H|\ \norm{x}{H}=1\}$.
\end{lemma}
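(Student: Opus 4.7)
The plan is to reduce the claim to a single-variable non-negative quadratic inequality and read off the bound from its discriminant. The point is that the assumption on $z$ says that $z$ is an almost-minimizer of the quadratic form $\langle S\cdot,\cdot\rangle_H$ on the unit sphere, so infinitesimal variations of $z$ in the direction $p$ cannot produce a first-order decrease that is too large — and this is exactly what controls $|\langle Sz,p\rangle_H|$.

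Concretely, I would first reduce to the case where $b := \langle Sz,p\rangle_H$ is a real number. In a complex Hilbert space this is achieved by replacing $p$ by $e^{-i\theta}p$, where $\theta = \arg\langle Sz,p\rangle_H$; this preserves $\|p\|_H = 1$ and $p\perp z$, so it is harmless. Then I set $a := \langle Sz,z\rangle_H$ and $c := \langle Sp,p\rangle_H$, and consider for every real $t$ the vector $x_t := z + tp$. Because $z\perp p$, one has $\|x_t\|_H^2 = 1+t^2$, and the self-adjointness of $S$ gives
\begin{equation}\nonumber
\langle Sx_t,x_t\rangle_H = a + 2tb + t^2 c.
\end{equation}

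By the very definition of $\alpha$, $\langle Sx_t,x_t\rangle_H \geq \alpha^2\|x_t\|_H^2 = \alpha^2(1+t^2)$ for every $t\in\R$, which rearranges to the non-negative quadratic inequality
\begin{equation}\nonumber
(c-\alpha^2)t^2 + 2bt + (a-\alpha^2) \geq 0, \qquad \forall t\in\R.
\end{equation}
The coefficient $c-\alpha^2$ is non-negative (since $\|p\|_H = 1$ forces $c\geq \alpha^2$), and by the hypothesis on $z$ one has $0\leq a-\alpha^2\leq \varepsilon^2$. Likewise $c\leq \rho^2$ by definition of $\rho$, so $0\leq c-\alpha^2\leq \rho^2-\alpha^2$. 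The non-negativity of the quadratic for all real $t$ forces its discriminant to be non-positive, which yields
\begin{equation}\nonumber
b^2 \leq (a-\alpha^2)(c-\alpha^2) \leq \varepsilon^2(\rho^2-\alpha^2),
\end{equation}
and taking square roots gives the claimed bound $|\langle Sz,p\rangle_H| = |b| \leq \varepsilon\sqrt{\rho^2-\alpha^2}$.

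I do not anticipate any real obstacle: the only subtlety is the degenerate case $c = \alpha^2$, in which the inequality $(c-\alpha^2)t^2 + 2bt + (a-\alpha^2)\geq 0$ for all $t$ forces $b=0$ immediately, so the bound holds trivially. The argument in fact does not require $\varepsilon>0$ — it works verbatim for $\varepsilon\geq 0$, which matches the way Proposition \ref{prop:betacomp} invokes it.
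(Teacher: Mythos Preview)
Your proof is correct and follows essentially the same approach as the paper: both restrict attention to linear combinations of $z$ and $p$ and exploit the lower bound $\alpha^2$ on the Rayleigh quotient of $S$. The only difference is cosmetic --- the paper parametrizes test vectors on the unit circle in $\spn\{z,p\}$ and then optimizes explicitly via an AM--GM-type step, whereas your discriminant argument on the unnormalized family $z+tp$ is slightly slicker and, as you note, does not require $\varepsilon>0$.
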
 

\begin{proof} Consider $t\in (0,1)$, $u_t:=-\text{sign}\inner{Sz,p}H\sqrt{1-t^2}$ and $z_t:=t\, z+u_t\, p$ of norm one. By definition of $\alpha$ we have
\begin{equation}\nonumber
\begin{aligned}
\alpha^2 &\leq \inner{Sz_t,z_t}H=t^2\inner{Sz,z}H+ 2t\,u_t\inner{Sz,p}H+u_t^2\inner{Sp,p}H\\
 &\leq t^2(\alpha^2+\e^2)+ 2t\,u_t\inner{Sz,p}H+u_t^2\rho^2 .
\end{aligned}
\end{equation}
Then
\begin{equation}\nonumber
\begin{aligned}
 -2t\,u_t\inner{Sz,p}H &\leq (t^2-1)\alpha^2+t^2\e^2+u_t^2\rho^2 \\
  2t\,|u_t|\left|\inner{Sz,p}H\right| &\leq t^2\e^2+u_t^2(\rho^2 -\alpha^2)\\
    2\left|\inner{Sz,p}H\right| &\leq \frac t{|u_t|}\e^2+\frac {|u_t|}{t}(\rho^2 -\alpha^2).
\end{aligned}
\end{equation}
This statement is true for any $t\in(0,1)$ so for any $\tau\in (0,1)$ we have
\begin{equation}\nonumber
2\left|\inner{Sz,p}H\right| \leq\tau\e^2+\frac {1}{\tau}(\rho^2 -\alpha^2).
\end{equation}
The minimum of the right-hand side is reached for $\tau=\sqrt{(\rho^2 -\alpha^2)/\e^2}$ which implies that $2\left|\inner{Sz,p}H\right| \leq 2\sqrt{\e^2(\rho^2 -\alpha^2)}.$
\end{proof}

\section{Limit of subsets and infimum}

Let $M$ be a Hilbert space and let $E\subset M$ be Banach space dense in $M$. Let $(M_h)_{h>0}$ be a sequence of subspace of $E$ endowed with the $M$-norm. We assume that the orthogonal projection $\pi_h:M\to M_h$ satisfies
\begin{equation}\nonumber
\forall x\in E,\quad \norm{\pi_hx}{E}\leq \norm{x}{E}.
\end{equation}

\begin{definition}\label{def:setlimit} For any sequence $(A_h)_{h>0}$ of subsets of $M$, we define its limit as 
\begin{equation}\nonumber
\lim_{h\to 0}A_h:=\left\{x\in M|\, \exists (x_h)_{h>0}\subset M,\ \lim_{h\to 0}\norm{x_h-x}{M}=0,\  \forall h>0\ x_h\in A_h\right\}.
\end{equation}
\end{definition}

\begin{proposition} $\lim_{h\to 0}A_h$ is a closed subset of $M$ and, if $A_h\subset X\subset M$ for all $h>0$, then $\lim_{h\to 0}A_h\subset \overline{X}$.
\end{proposition}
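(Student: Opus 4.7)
The plan is to handle the closedness of $L := \lim_{h \to 0} A_h$ by a diagonal construction, and to derive the inclusion $L \subset \overline{X}$ directly from the definition.

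For closedness, I would take a sequence $(y_n)_{n \geq 1} \subset L$ converging to some $y \in M$ and aim to construct a family $(x_h)_{h > 0}$ with $x_h \in A_h$ for all $h$ and $\|x_h - y\|_M \to 0$, which will place $y$ in $L$. If $A_h$ is empty for some $h > 0$, then $L$ is itself empty and closedness is trivial, so I would assume $A_h \neq \emptyset$ throughout. For each $n$, the membership $y_n \in L$ provides a witness family $(x_h^{(n)})_{h > 0}$ with $x_h^{(n)} \in A_h$ and $\|x_h^{(n)} - y_n\|_M \to 0$ as $h \to 0$.

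The core step is to glue these countably many families into a single family indexed by the continuous parameter $h$. I would pick a strictly decreasing sequence $(h_n)_{n \geq 1}$ tending to $0$ such that
\[
\|x_h^{(n)} - y_n\|_M \leq \frac{1}{n} \quad \text{for all } h \in (0, h_n],
\]
then define $x_h := x_h^{(n)}$ whenever $h \in (h_{n+1}, h_n]$ (and $x_h \in A_h$ arbitrary for $h > h_1$). A triangle inequality then yields, for $h \in (h_{n+1}, h_n]$,
\[
\|x_h - y\|_M \leq \|x_h^{(n)} - y_n\|_M + \|y_n - y\|_M \leq \frac{1}{n} + \|y_n - y\|_M,
\]
and since $h \to 0$ forces $n \to \infty$, the right-hand side vanishes. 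Hence $y \in L$.

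The second assertion is essentially immediate: if $x \in L$, one picks a witnessing family $(x_h)_{h > 0}$ with $x_h \in A_h \subset X$ and $x_h \to x$ in $M$, which exhibits $x$ as a limit of elements of $X$, hence $x \in \overline{X}$. The only delicate aspect of the whole argument is the need to produce $x_h$ for the entire continuum $h > 0$ rather than just along a subsequence; the partitioning of $(0, h_1]$ into the half-open intervals $(h_{n+1}, h_n]$ handles this cleanly.
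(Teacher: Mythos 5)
Your proposal is correct and follows essentially the same route as the paper: a diagonal gluing argument in which a strictly decreasing sequence $h_n\to 0$ partitions $(0,h_1]$ into half-open intervals on which the $n$-th witness family is used, followed by a triangle inequality. The second assertion is handled identically (the paper simply calls it trivial), and your explicit remarks that $h_n$ must tend to $0$ and that the statement is vacuous if some $A_h$ is empty are minor but welcome clarifications of points the paper leaves implicit.
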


\begin{proof}  Call $A:=\lim_{h\to 0}A_h$ and take $x\in \overline{A}$. There exists a sequence $(x_n)_{n\in \N}$ of $A$ such that $\norm{x-x_n}{M}\leq 1/(2n)$ for all $n\in\N^*$. For all $n\in\N^*$, there exists a sequence $(x_n^h)_{h>0}$ such that $\lim_{h\to 0}\norm{x^h_n-x_n}{M}=0$ and $x_n^h\in A_h$ for all $h>0$. Hence there exists $h_n>0$ such that for all $h\leq h_n$ we have  $\norm{x_n^h-x_n}{M}\leq 1/(2n)$. We can decrease $h_n$ to satisfy $h_n < h_{n-1}$ for all $n\geq 2$. Now define the sequence $(y_h)_{h>0}$ as follows: If $h>h_1$, $y_h$ is any element of $A_h$. If $h\in [h_{n+1},h_n)$, we take $y_h = x_n^{h}$. It is clear that $y_h\in A_h$ for all $h>0$. Moreover, for any $h\leq h_n$, $\norm{y_h-x_n}{M}\leq 1/(2n)$ and $\norm{x-x_n}{M}\leq 1/(2n)$ which give $\norm{y_h-x}{M}\leq 1/n$. This shows that $\lim_{h\to 0}\norm{y_h-x}{M}=0$ and therefore $x\in A$. The second part of the statement is trivial. 
 
\end{proof}

\begin{proposition}\label{prop:limsup} Assume that $A:=\lim_{h \to 0} A_h$ is not empty and consider a fonction $f:M\to \R$. If there exists a subset $B\subset A$ such that $f$ is continuous in $B$ and $\inf_Af=\inf_Bf$ then we have 
\begin{equation}\nonumber
\limsup_{h\to 0}\inf_{A_h}f\leq \inf_Af.
\end{equation}

\end{proposition}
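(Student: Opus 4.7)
The plan is quite direct: the hypothesis has been precisely tailored so that every element of $B$ admits an approximating sequence in $A_h$ on which $f$ behaves nicely, and we just need to pass to the limsup.

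First, I would fix an arbitrary $x \in B$. Since $B \subset A = \lim_{h\to 0} A_h$, the definition of the set limit (Definition \ref{def:setlimit}) supplies a sequence $(x_h)_{h>0}$ with $x_h \in A_h$ for every $h>0$ and $\|x_h - x\|_M \to 0$ as $h \to 0$.

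Next, I would use the continuity of $f$ on $B$ at the point $x$ to deduce $f(x_h) \to f(x)$. Combined with the trivial bound $\inf_{A_h} f \leq f(x_h)$ (valid because $x_h \in A_h$), this gives
\begin{equation*}
\limsup_{h \to 0} \inf_{A_h} f \;\leq\; \limsup_{h \to 0} f(x_h) \;=\; f(x).
\end{equation*}

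Since $x \in B$ was arbitrary, taking the infimum over $B$ and invoking the assumption $\inf_B f = \inf_A f$ yields the claim. The argument handles the degenerate case $\inf_A f = -\infty$ automatically, since for each $N$ one can pick $x \in B$ with $f(x) < -N$ and repeat. I do not foresee any genuine obstacle here: the hypotheses have been engineered to give precisely the approximation and the continuity needed, so the proof is essentially a two-line chain of inequalities with no delicate estimates to control.
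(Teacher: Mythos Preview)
Your proposal is correct and follows essentially the same argument as the paper: fix $x\in B$, pick an approximating sequence $x_h\in A_h$, use $\inf_{A_h} f\le f(x_h)$ together with continuity of $f$ at $x$ to get $\limsup_{h\to 0}\inf_{A_h} f\le f(x)$, and then take the infimum over $x\in B$. The only cosmetic difference is that the paper writes the continuity step as $f(x_h)\le f(x)+|f(x_h)-f(x)|$ before passing to the limsup, whereas you invoke $f(x_h)\to f(x)$ directly.
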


\begin{proof} Take $x\in B$. As $x\in A$, there exists $(x_h)_{h>0}$ such that $x_h\in A_h$ for all $h>0$ and  $\lim_{h\to 0}x_h=x$. For any $h>0$,
 $f(x_h) \leq f(x)+|f(x_h)-f(x)|$ and $\inf_{A_h} f \leq f(x)+|f(x_h)-f(x)|$.
Taking the superior limit when $h\to 0$ it comes from the continuity of $f$ at $x$, $\limsup_{h\to 0}\inf_{A_h}f\leq f(x)$ 
which if true for any $x\in B$ so $\displaystyle{\limsup_{h\to 0}\inf_{A_h}f\leq \inf_B f = \inf_A f}$.
\end{proof}
We assume now that the sequence $(M_h)$ satisfies $\lim_{h\to 0} M_h = M$. We consider a sequence of positive real number $\alpha_h$ that converges zero and a corresponding sequence of subsets $C_h:=\left\{x\in M_h\, |\, \alpha_h\norm{x}{E}\leq \norm{x}{M}\right\}$.

\begin{proposition}\label{prop:lim1} The following limit holds: $\displaystyle \lim_{h\to 0}C_h=M.$
\end{proposition}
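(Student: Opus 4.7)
The plan is to prove both inclusions $\lim_{h\to 0} C_h \subseteq M$ and $M \subseteq \lim_{h\to 0} C_h$. The first is immediate because $C_h \subseteq M_h \subseteq M$ for every $h > 0$ and $M$ is closed.

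For the reverse, non-trivial inclusion, I would fix $x \in M$ and try to construct, via a diagonal argument analogous to the one used to show that $\lim A_h$ is closed, a sequence $(z_h)_{h>0}$ with $z_h \in C_h$ such that $\|z_h - x\|_M \to 0$. The starting point is the density of $E$ in $M$: for each $n \in \N^*$ pick $y_n \in E$ with $\|y_n - x\|_M \leq 1/(2n)$. Each $y_n$ now has a finite $E$-norm, which is what makes the membership condition in $C_h$ testable.

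Next, for each fixed $n$, I would examine the candidate $\pi_h y_n \in M_h$. Three facts are available: (i) $\|\pi_h y_n - y_n\|_M \to 0$ as $h \to 0$ because $M_h$ approaches $M$ and $\pi_h$ is the $M$-orthogonal projection; (ii) $\|\pi_h y_n\|_E \leq \|y_n\|_E$ by the contraction hypothesis on $\pi_h$ in the $E$-norm; and (iii) $\alpha_h \to 0$. Combining (ii) and (iii), one gets $\alpha_h \|\pi_h y_n\|_E \leq \alpha_h \|y_n\|_E \to 0$, while $\|\pi_h y_n\|_M \to \|y_n\|_M$. If $y_n \neq 0$, there exists $h_n > 0$ (which I take also small enough so that $\|\pi_h y_n - y_n\|_M \leq 1/(2n)$) such that for every $h \leq h_n$, $\alpha_h \|\pi_h y_n\|_E \leq \|\pi_h y_n\|_M$, so $\pi_h y_n \in C_h$; if $y_n = 0$ then $\pi_h y_n = 0 \in C_h$ trivially and any $h_n$ works.

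The diagonal step concludes the proof: shrinking the $h_n$ if necessary so that $h_{n+1} < h_n$, I define $z_h := \pi_h y_n$ whenever $h \in (h_{n+1}, h_n]$, and $z_h := 0$ for $h > h_1$. By construction $z_h \in C_h$ for every $h > 0$, and whenever $h \leq h_n$ one has $\|z_h - x\|_M \leq \|\pi_h y_m - y_m\|_M + \|y_m - x\|_M \leq 1/n$ for the relevant $m \geq n$, so $z_h \to x$ in $M$. The main obstacle, as in the closedness proof already carried out above, is purely bookkeeping: one must choose the thresholds $h_n$ simultaneously to enforce both $M$-approximation of $x$ and membership in $C_h$, and this requires combining the three asymptotics ($\alpha_h \to 0$, $\|\pi_h y_n\|_E$ bounded, and $\|\pi_h y_n\|_M \to \|y_n\|_M$) uniformly in $n$ via the standard diagonal extraction.
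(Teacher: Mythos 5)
Your proof is correct and follows essentially the same route as the paper: both arguments take $y\in E$ (dense in $M$), use the $E$-contraction of $\pi_h$ together with $\alpha_h\to 0$ and $\norm{\pi_h y}{M}\to\norm{y}{M}$ to get $\pi_h y\in C_h$ for small $h$, and then pass from $E$ to all of $M$. The only cosmetic difference is that the paper concludes by invoking the closedness of the limit set (the preceding proposition) while you inline the corresponding diagonal extraction.
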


\begin{proof}  We prove that $E\subset C:=\lim_{h\to 0}C_h$. Take $x\in E\bs \{0\}$, for $h$ small enough it satisfies  $2\alpha_h\norm{x}{E}\leq \norm{x}{M}$. Consider now its orthogonal projection $\pi_hx$ of $x$ onto $M_h$. It satisfies $\lim_{h\to 0}\pi_hx=x$. For $h$ small enough $\norm{x}{M}\leq 2\norm{\pi_hx}{M}$ and then
\begin{equation}\nonumber
\alpha_h\norm{\pi_hx}{E}\leq \alpha_h\norm{x}{E}\leq \frac{1}{2}\norm{x}{M}\leq \norm{\pi_hx}{M}
\end{equation}
which means that $\pi_hx\in C_h$. As a consequence, $x\in \lim_{h\to 0}C_h$.
\end{proof}

\begin{proposition}\label{prop:wc} Let $(z_h)_{h>0}$ be sequence of $M$ such that $\norm{z_h}{M}=1$ and which converges weakly to $z\neq 0$. Then 
\begin{equation}\nonumber
\lim_{h\to 0}\left(C_h\cap \{z_h\}^\perp\right)=M\cap \{z\}^\perp.
\end{equation}
\end{proposition}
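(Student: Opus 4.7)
The plan is to establish the two set inclusions separately. For the inclusion $\lim_{h\to 0}(C_h \cap \{z_h\}^\perp) \subset M \cap \{z\}^\perp$, suppose $x_h \in C_h \cap \{z_h\}^\perp$ converges strongly in $M$ to some $x$. Since $\inner{x_h, z_h}{M} = 0$ for every $h$, and the $z_h$'s converge weakly to $z$ while $x_h \to x$ strongly, the standard weak-strong passage to the limit in the inner product yields $\inner{x, z}{M} = 0$, hence $x \in M \cap \{z\}^\perp$.

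The reverse inclusion is the substantive part. I would first establish it on the dense subset $E \cap \{z\}^\perp$ by an explicit construction, then conclude by combining closedness of the limit set with density. Fix $y \in E \cap \{z\}^\perp$; the case $y = 0$ is trivial (take $y_h = 0$), so assume $y \neq 0$. Define
\begin{equation*}
y_h := \pi_h y - c_h \pi_h z \in M_h, \qquad c_h := \frac{\inner{\pi_h y, z_h}{M}}{\inner{\pi_h z, z_h}{M}},
\end{equation*}
so that $\inner{y_h, z_h}{M} = 0$ by construction. To see the denominator is bounded below for small $h$, I would use self-adjointness of $\pi_h$ to rewrite $\inner{\pi_h z, z_h}{M} = \inner{z, \pi_h z_h}{M}$; weak convergence of $z_h$ to $z$ together with $\pi_h m \to m$ for $m \in M$ gives that $\pi_h z_h$ converges weakly to $z$, so this inner product tends to $\norm{z}{M}^2 > 0$. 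The same weak-strong argument gives that the numerator tends to $\inner{y,z}{M} = 0$, hence $c_h \to 0$, and combined with $\pi_h y \to y$ in $M$ this yields $y_h \to y$ in $M$.

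The main obstacle is ensuring $y_h \in C_h$, i.e., $\alpha_h \norm{y_h}{E} \leq \norm{y_h}{M}$, and this is precisely the reason to use the correction $\pi_h z$ rather than the more natural $\pi_h z_h$. By the $E$-contractivity hypothesis \eqref{eq:Econt}, $\norm{\pi_h y}{E} \leq \norm{y}{E}$ and $\norm{\pi_h z}{E} \leq \norm{z}{E}$, so
\begin{equation*}
\alpha_h \norm{y_h}{E} \leq \alpha_h \bigl(\norm{y}{E} + |c_h|\, \norm{z}{E}\bigr) \longrightarrow 0
\end{equation*}
since $y, z$ are fixed in $E$ and $\alpha_h, c_h \to 0$. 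Meanwhile $\norm{y_h}{M} \to \norm{y}{M} > 0$, so for $h$ small enough $y_h \in C_h \cap \{z_h\}^\perp$. Had I instead picked the correction $\pi_h z_h$, the analogous bound $\norm{\pi_h z_h}{E} \leq \norm{z_h}{E}$ would be useless since the $E$-norms of the $z_h$ are not assumed uniformly bounded.

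Finally, to pass from $E \cap \{z\}^\perp$ to $M \cap \{z\}^\perp$, I would combine closedness of the limit set (from the first appendix proposition) with density of $E \cap \{z\}^\perp$ in $M \cap \{z\}^\perp$. This density uses $z \in E$, the standing hypothesis in the application to Theorem \ref{theo:usc}: any $M$-approximation $y_n \in E$ of a given $x \in M \cap \{z\}^\perp$ can be corrected into $y_n - \norm{z}{M}^{-2}\inner{y_n, z}{M}\, z \in E \cap \{z\}^\perp$ while still converging to $x$ in $M$, which completes the argument.
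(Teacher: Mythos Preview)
Your proof is correct and follows essentially the same strategy as the paper: prove both inclusions, and for the hard direction construct an approximating sequence in $M_h$ by correcting the projection $\pi_h y$ with a multiple of some projected vector so as to land in $\{z_h\}^\perp$, then use the $E$-contractivity of $\pi_h$ to verify membership in $C_h$.

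The one substantive difference is your choice of correction vector. You use $\pi_h z$, which forces you to assume $z\in E$ both for the bound $\norm{\pi_h z}{E}\leq\norm{z}{E}$ and for the density of $E\cap\{z\}^\perp$ in $M\cap\{z\}^\perp$ at the end. The paper instead picks an auxiliary $\wt z\in E$ with $\norm{\wt z}{M}=1$ and $\inner{z,\wt z}{M}\geq 1/2$ (which exists by density of $E$ in $M$) and corrects by $\pi_h\wt z$. The denominator $\inner{\pi_h\wt z,z_h}{M}\to\inner{\wt z,z}{M}\geq 1/2$ by the same weak--strong argument, and $\norm{\pi_h\wt z}{E}\leq\norm{\wt z}{E}$ is automatic. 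The same auxiliary $\wt z$ also yields the density of $E\cap\{z\}^\perp$ in $M\cap\{z\}^\perp$ without invoking $z\in E$. So the paper proves the proposition exactly as stated, whereas your argument proves it under the additional hypothesis $z\in E$. You correctly note that this hypothesis holds in the application to Theorem~\ref{theo:usc}, so nothing is lost there; but the paper's device of replacing $z$ by a nearby $\wt z\in E$ is worth retaining since it makes the proposition self-contained.
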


\begin{proof} Take $x\in \lim_{h\to 0}\left(C_h\cap \{z_h\}^\perp\right)$. There exists $(x_h)$ such that $x_h\in C_h$ and $x_h\perp z_h$ and $x_h\to x$. We have $\inner{x,z}{M}=\lim_{h\to 0}\inner{x,z_h}{M}=\lim_{h\to 0}\inner{x-x_h,z_h}{M}=0$. Then $x\in M\cap \{z\}^\perp$.

Reversely, take $x\in M\cap \{z\}^\perp$, and fix $\e>0$. There exists $x_\e\in E\bs\{0\}$ such that $\norm{x_\e-x}{M}\leq \e$ and $x_\e\perp z$ and Consider now the orthogonal projection $\pi_hx_\e$ of $x_\e$ onto $M_h$. It satisfies $\lim_{h\to 0}\pi_hx_\e=x_\e$. For $h$ small enough $\norm{x_\e}{M}\leq 2\norm{\pi_hx_\e}{M}$. Consider now $\wt z\in E$ such that $\inner{z,\wt z}{M}\geq 1/2$ and $\norm{\wt z}{M}=1$. We define now
\begin{equation}\nonumber
x^h_\e=\pi_hx_\e+\beta_h \pi_h\wt z\quad \in M_h,
\end{equation}

with $\beta_h = -\inner{\pi_hx_\e,z_h}{M}/\inner{\pi_h\wt z,z_h}{M}$ in order to have $x^h_\e\perp z_h$ for all $h$. Remark that $\beta_h$ is wel defined for $h$ small enough as $\inner{\pi_h\wt z,z_h}{M}$ converges to $\inner{z,\wt z}{M}$ and converges to zero as $\inner{\pi_hx_\e,z_h}{M}=\inner{x_\e,z_h}{M}+\inner{\pi_hx_\e-x_\e,z_h}{M}$ converges to $\inner{x_\e,z}{M}=0$. Then $x^h_\e\to x_\e$. Now we write
\begin{equation}\nonumber\begin{aligned}
\norm{x^h_\e}E &\leq\norm{\pi_hx_\e}E+\beta_h \norm{\pi_h\wt z}E\leq \norm{x_\e}E+\beta_h \norm{\wt z}E,\\
\end{aligned}\end{equation}
and $\norm{x^h_\e}M\to \norm{x_\e}{M}\neq 0$. As a consequence, for $h$ small enough, $\alpha_h\norm{x^h_\e}E\leq \norm{x^h_\e}M$ which means that $x^h_\e\in C_h\cap \{z_h\}^\perp$  for $h$ small enough. This shows that $x_\e\in \lim_{h\to 0}\left(C_h\cap \{z_h\}^\perp\right)$. This is true for any $\e>0$ and as the limit set is closed, $x\in \lim_{h\to 0}\left(C_h\cap \{z\}^\perp\right)$.
\end{proof}

\section*{Acknowledgments}
The authors acknowledge support from the LABEX MILYON (ANR-10-LABX-0070) of Universit\'e de Lyon, within the program "Investissements d'Avenir" (ANR-11-IDEX- 0007) operated by the French National Research Agency (ANR). 

\bibliographystyle{plain}
\bibliography{biblio}
\end{document}